\documentclass[11pt]{amsart}
\usepackage{amssymb,amsmath,amsfonts,amsthm, fullpage}

\numberwithin{equation}{section}
\usepackage{mathrsfs}


\usepackage{color}

\usepackage{verbatim,moreverb}

\renewcommand{\AA}{\mathbb A}

\newcommand{\CC}{\mathbb C}

\newcommand{\FF}{\mathbb F}

\newcommand{\PP}{\mathbb P}
\newcommand{\QQ}{\mathbb Q}
\newcommand{\RR}{\mathbb R}

\newcommand{\ZZ}{\mathbb Z} 

\newcommand{\Zhat}{\widehat\ZZ}

\newcommand{\calA}{\mathcal A} 
\newcommand{\calC}{\mathcal C} 
\newcommand{\calG}{\mathcal G} 
 
\newcommand{\calI}{\mathcal I} 

\newcommand{\calJ}{\mathcal J} 
\newcommand{\calN}{\mathcal N} 
\newcommand{\calU}{\mathcal U}

\newcommand{\scrI}{\mathscr I} 
\newcommand{\calS}{\mathcal S}

\newcommand{\OO}{\mathcal O}

\newcommand{\s}{\mathfrak{sl}}

\def\Spec{\operatorname{Spec}} 
\def\tr{\operatorname{tr}}
 \def\Gal{\operatorname{Gal}}
\def \Frob {\operatorname{Frob}}

\def \GL {\operatorname{GL}}  
\def \PGL {\operatorname{PGL}}
\def \SL {\operatorname{SL}}
\def \PSL {\operatorname{PSL}}

\def\Aut{\operatorname{Aut}} \def\End{\operatorname{End}}

\newcommand{\Hom}{\operatorname{Hom}}

\def\cyc{{\operatorname{cyc}}}
\def\ab{{\operatorname{ab}}}

\def\bbar#1{\setbox0=\hbox{$#1$}\dimen0=.2\ht0 \kern\dimen0 \overline{\kern-\dimen0 #1}}
\newcommand{\Qbar}{{\overline{\mathbb Q}}} 
 
\newcommand{\kbar}{\bbar{k}} 
 
\newcommand{\FFbar}{\overline{\FF}} 

\newcommand{\defi}[1]{\textsf{#1}} 

\newtheorem{thm}{Theorem}[section]
\newtheorem{lemma}[thm]{Lemma}
\newtheorem{cor}[thm]{Corollary}
\newtheorem{prop}[thm]{Proposition}
\newtheorem{conj}[thm]{Conjecture} 

\theoremstyle{example}

\theoremstyle{definition}

\newtheorem{defn}[thm]{Definition}

\theoremstyle{remark}
\newtheorem{remark}[thm]{Remark}

\newenvironment{romanenum}{\hfill \begin{enumerate} }{\end{enumerate}}
\newenvironment{alphenum}{\hfill \begin{enumerate} }{\end{enumerate}}

\definecolor{webbrown}{rgb}{.6,0,0}
\usepackage[
        colorlinks,
        linkcolor=webbrown,  filecolor=webcolor,  citecolor=webbrown, 
        backref,
        pdfauthor={David Zywina}, 
]{hyperref}
\usepackage[alphabetic,backrefs,lite]{amsrefs} 

\begin{document}

\title[]{Possible indices for the Galois image of elliptic curves over $\QQ$}
\subjclass[2020]{Primary 11G05; Secondary 11F80}

\author{David Zywina}
\address{Department of Mathematics, Cornell University, Ithaca, NY 14853, USA}
\email{zywina@math.cornell.edu}
\urladdr{http://www.math.cornell.edu/~zywina}

\begin{abstract} 
For a non-CM elliptic curve $E/\QQ$, the Galois action on its torsion points can be expressed in terms of a Galois representation $\rho_E \colon \Gal_\QQ :=\Gal(\Qbar/\QQ)\to \GL_2(\Zhat)$.   A well-known theorem of Serre says that the image of $\rho_E$ is open and hence has finite index in $\GL_2(\Zhat)$.     We will study what indices are possible assuming that we are willing to exclude a finite number of possible $j$-invariants from consideration.    For example, we will show that there is a finite set $J$ of rational numbers such that if $E/\QQ$ is a non-CM elliptic curve with $j$-invariant not in $ J$ and with surjective mod $\ell$ representations for all $\ell >37$ (which conjecturally always holds), then the index $[\GL_2(\Zhat) : \rho_E(\Gal_\QQ)]$ lies in the set 
\[
\calI = \left\{\begin{array}{c}2, 4, 6, 8, 10, 12, 16, 20, 24, 30, 32, 36, 40, 48, 54, 60, 72, 84, 96, 108, 112,120, 144, \\192, {220}, {240},  288, 336, {360},  384, {504}, 576, 768, 864, 1152, 1200, 1296, 1536 \end{array}\right\}.
\]
Moreover, $\calI$ is the minimal set with this property.
\end{abstract}

\maketitle
\section{Introduction} \label{S:intro}

\subsection{Main results}  \label{SS:main results}
Let $E$ be an elliptic curve defined over $\QQ$.  For each integer $N>1$, let $E[N]$ be the $N$-torsion subgroup of $E(\Qbar)$.  The group $E[N]$ is a free $\ZZ/N\ZZ$-module of rank 2 and has natural action of the absolute Galois group $\Gal_\QQ:= \Gal(\Qbar/\QQ)$.   This Galois action on $E[N]$ may be expressed in terms of a Galois representation
\[
\rho_{E,N} \colon \Gal_\QQ \to \Aut_{\ZZ/N\ZZ}(E[N]) \cong \GL_2(\ZZ/N\ZZ);
\]
it is uniquely determined up to conjugacy by an element of $\GL_2(\ZZ/N\ZZ)$.  
By choosing bases compatibly for all $N$, we may combine the representations $\rho_{E,N}$ to obtain a single Galois representation
\[
\rho_E\colon \Gal_\QQ \to \GL_2(\Zhat)
\]
that describes the Galois action on all the torsion points of $E$, where $\Zhat$ is the profinite completion of $\ZZ$.  If $E$ is \defi{non-CM}, then the following theorem of Serre \cite{MR0387283} says that the image is, up to finite index, as large as possible.

\begin{thm}[Serre] \label{T:Serre1} 
If $E/\QQ$ is a non-CM elliptic curve, then $\rho_E(\Gal_\QQ)$ has finite index in $\GL_2(\Zhat)$. 
\end{thm}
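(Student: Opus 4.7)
The plan is to exploit the decomposition $\GL_2(\Zhat) = \prod_\ell \GL_2(\ZZ_\ell)$ and reduce the finite index assertion to two statements: (i) for every prime $\ell$, the $\ell$-adic image $\rho_{E,\ell^\infty}(\Gal_\QQ)$ is open in $\GL_2(\ZZ_\ell)$, and (ii) for all but finitely many primes $\ell$, the mod-$\ell$ representation $\rho_{E,\ell}$ is surjective onto $\GL_2(\FF_\ell)$. Once (i) and (ii) are in hand, a Goursat-style gluing argument based on the simplicity of $\PSL_2(\FF_\ell)$ for $\ell \geq 5$ assembles the local information into openness of $\rho_E(\Gal_\QQ)$ in $\GL_2(\Zhat)$.

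For (i), I would invoke the standard $\ell$-adic lifting lemma: the kernel of $\SL_2(\ZZ_\ell) \to \SL_2(\FF_\ell)$ is a pro-$\ell$ group whose graded pieces are copies of the simple Galois module $\s_2(\FF_\ell)$ (simple for $\ell \geq 5$). Combined with the existence of nontrivial unipotent elements in the image, coming for instance from tame inertia at primes of multiplicative reduction, this forces the Lie algebra of the image to fill up $\s_2(\ZZ_\ell)$, giving openness. The bulk of the work lies in (ii). I would classify the maximal proper subgroups $H \subset \GL_2(\FF_\ell)$ with surjective determinant: (a) Borel subgroups, (b) normalizers of split Cartan subgroups, (c) normalizers of non-split Cartan subgroups, and (d) exceptional subgroups whose projective image is $A_4$, $S_4$, or $A_5$. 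Case (d) is eliminated for $\ell$ greater than an absolute constant by a simple order estimate, since the projective exceptional subgroups have bounded order while the image of inertia forces $\ell \mid |H|$. Cases (b) and (c) are handled by noting that a Cartan image imposes restrictive congruences on the Frobenius traces $a_p(E)$ relative to the splitting of $p$ in an auxiliary quadratic field; combined with the Hasse bound $|a_p| \leq 2\sqrt{p}$ and Chebotarev density, these cannot all be satisfied for infinitely many $\ell$. Case (a) forces $E$ to carry a $\QQ$-rational cyclic subgroup of order $\ell$; modern expositions quote Mazur's theorem on rational isogenies, while Serre's original argument analyzes the two diagonal characters (whose product is the cyclotomic character), controls their ramification, and derives a contradiction from the Weil bound.

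The hardest step will be case (a) of (ii): ruling out Borel images for large $\ell$. In the modern framework this rests on Mazur's deep theorem on rational points of $X_0(\ell)$, and Serre's own argument requires delicate control of the possible characters modulo $\ell$, of their behavior at the bad primes of $E$, and of the resulting trace of Frobenius identities. The non-split Cartan case is also subtle, as there is no clean rational-points theorem for $X_{\mathrm{nsp}}^+(\ell)$ covering all $\ell$; here one falls back on Serre's explicit Frobenius-distribution argument rather than a single black-box modular-curve result. The payoff is that the finite set of exceptional $j$-invariants produced by these obstructions is precisely the sort of finite set $J$ absorbed by the main theorems of the paper.
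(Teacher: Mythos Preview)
The paper does not prove Theorem~\ref{T:Serre1}; it is quoted from Serre's 1972 paper \cite{MR0387283} and used as a black box. So there is no ``paper's own proof'' to compare against. Your outline is broadly faithful to Serre's original strategy---reduce to the $\ell$-adic pieces, show openness at each $\ell$, show mod-$\ell$ surjectivity for almost all $\ell$, then glue via Goursat and the simplicity of $\PSL_2(\FF_\ell)$---and your treatment of~(ii) is a reasonable sketch of how Serre handles the Borel, Cartan, and exceptional cases.

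There is, however, a genuine gap in your argument for~(i). The lifting lemma you invoke only helps once you already know the mod-$\ell$ image contains $\SL_2(\FF_\ell)$; it says nothing about the finitely many primes where $\rho_{E,\ell}$ is \emph{not} surjective, and those are exactly the primes for which~(i) requires a separate argument. Your fallback---nontrivial unipotents coming from tame inertia at a prime of multiplicative reduction---is not always available: a non-CM elliptic curve over $\QQ$ can have $j_E\in\ZZ$ (there are infinitely many such $j$, only thirteen of which are CM), and such a curve has \emph{potentially good reduction everywhere}, so inertia supplies no unipotent elements at all. More seriously, you never say where the non-CM hypothesis enters; as written, your sketch of~(i) would apply equally to a CM curve, where the $\ell$-adic image sits in the normalizer of a Cartan and is \emph{not} open. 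The correct argument for~(i) is Lie-theoretic: the image is an $\ell$-adic Lie group, its Lie algebra $\mathfrak g_\ell\subseteq\gl_2(\QQ_\ell)$ is algebraic, and the non-CM hypothesis (via $\End_{\Gal_\QQ}(V_\ell E)=\QQ_\ell$, which Serre proves directly and which now follows from Faltings) forces $\mathfrak g_\ell=\gl_2$, hence openness. You should replace your paragraph on~(i) with this argument and make explicit that non-CM is consumed precisely here.
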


Serre's theorem is qualitative, and it natural to ask what the possible values for the index $[\GL_2(\Zhat):\rho_E(\Gal_\QQ)]$ are.  Our theorems address this question assuming that we are  willing to exclude a finite number of exceptional $j$-invariants from consideration; we will see later that the index $[\GL_2(\Zhat):\rho_E(\Gal_\QQ)]$ depends only on the $j$-invariant $j_E$ of $E$.

The most difficult part of Serre's proof of Theorem~\ref{T:Serre1} is to show that there is an integer $c_{E}$ such that $\rho_{E,\ell}(\Gal_\QQ)=\GL_2(\ZZ/\ell\ZZ)$ for all $\ell > c_{E}$.   In  \cite{MR0387283}*{\S4.3}, Serre asks whether one can choose $c_{E}$ independent of the elliptic curve (moreover, he asked whether this holds with $c_E=37$ \cite{MR644559}*{p.~399}).  We formulate this as a conjecture.

\begin{conj} \label{C:Serre1}
There is an absolute constant $c$ such that for every non-CM elliptic curve $E$ over $\QQ$, we have $\rho_{E,\ell}(\Gal_\QQ)=\GL_2(\ZZ/\ell\ZZ)$
for all $\ell> c$.
\end{conj}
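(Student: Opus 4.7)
The plan is to invoke Serre's classification of maximal proper subgroups of $\GL_2(\FF_\ell)$ and, for each type, rule out the corresponding image by controlling $\QQ$-rational points on an associated modular curve. Suppose $\rho_{E,\ell}(\Gal_\QQ)$ is a proper subgroup of $\GL_2(\FF_\ell)$ for some prime $\ell \geq 5$. Since $\det\circ\rho_{E,\ell}$ is the cyclotomic character and hence surjective, the image is contained, up to conjugacy, in one of: (a) a Borel subgroup, (b) the normalizer of a split Cartan, (c) the normalizer of a nonsplit Cartan, or (d) an exceptional subgroup whose projection to $\PGL_2(\FF_\ell)$ is isomorphic to $A_4$, $S_4$, or $A_5$. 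Case (d) is eliminated for $\ell$ larger than a small explicit bound by order considerations together with constraints on the image of complex conjugation.

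Case (a) is equivalent to $E$ admitting a $\QQ$-rational cyclic $\ell$-isogeny, i.e.\ a non-cuspidal $\QQ$-point on $X_0(\ell)$. Mazur's theorem on rational isogenies forces $\ell \in \{2,3,5,7,11,13,17,19,37,43,67,163\}$, so in particular $\ell \leq 163$. Case (b) reduces to studying non-cuspidal, non-CM $\QQ$-points on $X_{\mathrm{sp}}^+(\ell)$; by the work of Bilu, Parent, and Rebolledo (supplemented by the quadratic-Chabauty treatment of $\ell = 13$ by Balakrishnan, Dogra, M\"uller, Tuitman, and Vonk), no such points exist for $\ell \geq 11$. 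Hence cases (a), (b), (d) each contribute only finitely many exceptional primes, and one can extract an explicit $c$ from the analysis so far.

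The hard part---and the sole remaining obstruction to Conjecture~\ref{C:Serre1}---is case (c). One would need to show that $X_{\mathrm{ns}}^+(\ell)(\QQ)$ consists only of cusps and CM points for all sufficiently large $\ell$. The genus of $X_{\mathrm{ns}}^+(\ell)$ grows linearly in $\ell$, so Faltings' theorem gives finiteness for each fixed $\ell$, but yields no bound uniform in $\ell$. Chabauty--Kim and quadratic Chabauty methods have resolved a handful of individual small primes, yet no technique is known that controls rational points on $X_{\mathrm{ns}}^+(\ell)$ uniformly; for this reason Serre's uniformity conjecture remains open, although the reduction above isolates the difficulty precisely in the nonsplit Cartan case. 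A complete proof would most plausibly proceed by finding some extension of Chabauty-type arguments---or a genuinely new Diophantine input---that bounds $|X_{\mathrm{ns}}^+(\ell)(\QQ)|$ (or forces its points to be CM) uniformly in $\ell$.
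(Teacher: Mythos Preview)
The statement you are attempting to prove is labeled \emph{Conjecture} in the paper, not Theorem, and the paper does not provide a proof of it. It is stated as Serre's uniformity conjecture, explicitly left open, and used only as a hypothesis in the main results (Theorems~\ref{T:main ineffective} and~\ref{T:index Q}). There is therefore no ``paper's own proof'' to compare against.

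Your proposal is not a proof either, and you essentially acknowledge this: after reducing to the four subgroup types and disposing of the Borel, split Cartan normalizer, and exceptional cases via Mazur, Bilu--Parent--Rebolledo, and elementary arguments, you correctly observe that the nonsplit Cartan normalizer case remains unresolved, and that no known method controls $X_{\mathrm{ns}}^+(\ell)(\QQ)$ uniformly in $\ell$. What you have written is an accurate survey of the state of the art and a precise localization of the obstruction, but it is a proof \emph{sketch of a strategy}, not a proof. The genuine gap is exactly the one you name: there is currently no argument, uniform in $\ell$, ruling out non-CM rational points on $X_{\mathrm{ns}}^+(\ell)$ for all large $\ell$. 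Until that is supplied, the conjecture remains open, and no proposal that stops at ``one would need to show'' constitutes a proof.
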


Define the set 
\[
\calI:= \left\{\begin{array}{c}2, 4, 6, 8, 10, 12, 16, 20, 24, 30, 32, 36, 40, 48, 54, 60, 72, 84, 96, 108, 112,120, 144, \\192, {220}, {240},  288, 336, {360},  384, {504}, 576, 768, 864, 1152, 1200, 1296, 1536 \end{array}\right\}.
\]
\begin{thm} \label{T:main ineffective}
Fix an integer $c$.    There is a finite set $J$, depending only on $c$, such that if $E/\QQ$ is an elliptic curve with $j_E \notin J$ and $\rho_{E,\ell}$ surjective for all primes $\ell >c$, then  $[\GL_2(\Zhat) : \rho_E(\Gal_\QQ)]$ is an element of $\calI$.
\end{thm}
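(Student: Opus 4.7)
The idea is to convert the statement about possible indices into a statement about rational points on finitely many modular curves, then invoke the classification of such points.

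First I would reduce to a bounded level. Under the hypothesis that $\rho_{E,\ell}$ is surjective for all $\ell > c$, Serre's lemma (which upgrades mod-$\ell$ surjectivity to $\ell$-adic surjectivity for $\ell \geq 5$) gives $\rho_{E,\ell^\infty}(\Gal_\QQ) = \GL_2(\ZZ_\ell)$ for every prime $\ell \geq \max(5,c+1)$. Because $\SL_2(\ZZ_\ell)$ is topologically perfect for $\ell \geq 5$ and $\det \circ \rho_E$ is the full cyclotomic character, Goursat's lemma rules out entanglement between these $\ell$-adic images and the images at other primes. Consequently
\[
[\GL_2(\Zhat):\rho_E(\Gal_\QQ)] \;=\; [\GL_2(\ZZ/N\ZZ):\rho_{E,N}(\Gal_\QQ)]
\]
for any integer $N$ whose prime divisors all lie in $\{\ell : \ell \leq \max(3,c)\}$, provided each such $\ell$ appears to sufficiently high power---a bound that depends \emph{a priori} on $E$.

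Second I would reinterpret admissible images via modular curves. To each open $H \leq \GL_2(\Zhat)$ of finite index, containing $-I$ and with surjective determinant, attach the modular curve $X_H/\QQ$, whose non-cuspidal non-CM $\QQ$-points parameterize $j$-invariants of $E/\QQ$ with $\rho_E(\Gal_\QQ)$ conjugate into $H$. Call $H$ \emph{generic} if it surjects onto $\GL_2(\ZZ/\ell\ZZ)$ for every $\ell > c$ and $X_H(\QQ)$ is infinite. For each prime $\ell$, the classification of rational points on modular curves---via Mazur's isogeny theorem, the Bilu--Parent--Rebolledo theorem on normalizers of split Cartan subgroups, and explicit case analyses of $\ell$-adic images at small primes---implies that only finitely many open $H_\ell \leq \GL_2(\ZZ_\ell)$ can be the $\ell$-adic component of a generic $H$. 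Assembling the finitely many compatible tuples $(H_\ell)$ over primes $\ell \leq \max(3,c)$, with allowable entanglement among primes $\ell \leq c$, yields a finite list of generic $H$'s. The $j$-invariants attached to \emph{non-generic} admissible $H$'s are finite in number and constitute $J$; the remaining curves then satisfy $[\GL_2(\Zhat) : \rho_E(\Gal_\QQ)] \in \calI$, where $\calI$ is defined as the set of indices $[\GL_2(\Zhat):H]$ for generic $H$.

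The main obstacle is the finiteness claim just invoked: that only finitely many levels of the tower of modular curves support infinitely many $\QQ$-points. This rests on a long line of deep theorems on rational points on modular curves, and it is the heart of the argument. Matching the explicit list $\calI$ stated in the theorem then requires a careful combinatorial enumeration (with attention to prime-level entanglement) and, to establish minimality, an explicit construction for each element of $\calI$ of an infinite family of elliptic curves realizing it.
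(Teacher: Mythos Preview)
Your overall architecture is right—translate the index question into rational points on modular curves, then argue finiteness—but the key finiteness step is misattributed, and this is a genuine gap.

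You invoke Mazur's isogeny theorem and Bilu--Parent--Rebolledo to conclude that only finitely many open $H_\ell \leq \GL_2(\ZZ_\ell)$ can occur. These theorems are about \emph{specific} families of modular curves (Borel subgroups, normalizers of split Cartan) as the prime $\ell$ varies; they do not classify the open subgroups of $\GL_2(\ZZ_\ell)$ for a \emph{fixed} $\ell$, which is what you actually need. Moreover, since surjectivity for $\ell > c$ is part of the hypothesis, results constraining non-surjective mod-$\ell$ images at large primes are simply irrelevant here. The ``explicit case analyses at small primes'' you allude to do exist for a handful of primes, but $c$ is arbitrary, and in any case those classifications are themselves \emph{consequences} of the mechanism that is really doing the work. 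What actually gives the finiteness is much softer: any open $H$ with $X_H(\QQ)$ infinite has $X_H$ of genus $\leq 1$ by Faltings, and there are only finitely many congruence subgroups of $\SL_2(\ZZ)$ of genus $\leq 1$ (Cummins--Pauli). The paper combines this with a Frattini-type descent to show that only finitely many minimal ``bad'' groups $G$ need be excluded to form $J$; no prime-by-prime classification of images is required.

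The paper also takes a cleaner route to the index itself. Rather than tracking $\rho_E(\Gal_\QQ)$ directly—which forces you to worry about whether $-I$ lies in the image, i.e., about twists, and hence about whether $[\GL_2(\Zhat):H]$ really equals $[\GL_2(\Zhat):\rho_E(\Gal_\QQ)]$—it passes to $G := \Zhat^\times \cdot \rho_E(\Gal_\QQ)$ and uses Kronecker--Weber to show $[\GL_2(\Zhat):\rho_E(\Gal_\QQ)] = [\SL_2(\Zhat):G']$. This commutator depends only on $j_E$, and the set $\scrI$ of possible values is then defined purely in terms of the finitely many genus $\leq 1$ congruence subgroups and computed by a finite \texttt{Magma} search over the Cummins--Pauli list, yielding $\scrI = \calI$.
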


Assuming Conjecture~\ref{C:Serre1}, we can describe all possible indices $[\GL_2(\Zhat):\rho_E(\Gal_\QQ)]$ after first excluding elliptic curves with a finite number of exceptional $j$-invariants.

\begin{thm} \label{T:index Q}
Conjecture~\ref{C:Serre1} holds if and only if there exists a finite set $J\subseteq \QQ$ such that 
\[
[\GL_2(\Zhat):\rho_E(\Gal_\QQ)] \in \calI
\]
for every elliptic curve $E$ over $\QQ$ with $j_E\not\in J$.    
\end{thm}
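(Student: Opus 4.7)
The plan is to prove the biconditional by handling each direction separately. For the forward direction, assume Conjecture \ref{C:Serre1} holds with some absolute constant $c$, and apply Theorem \ref{T:main ineffective} with this $c$ to produce a finite set $J_0 \subseteq \QQ$ such that every non-CM $E/\QQ$ with $j_E \notin J_0$ has $[\GL_2(\Zhat):\rho_E(\Gal_\QQ)] \in \calI$. Adjoining the (finitely many) $j$-invariants of CM elliptic curves over $\QQ$ gives the required set $J$: the index is infinite for CM curves, since their adelic image is not open in $\GL_2(\Zhat)$, so such curves must be excluded.

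For the converse, suppose $J$ is as hypothesized and set $M := \max \calI = 1536$. For any $E/\QQ$ with $j_E \notin J$ and any prime $\ell$, the mod-$\ell$ index $[\GL_2(\FF_\ell) : \rho_{E,\ell}(\Gal_\QQ)]$ divides the adelic index, hence is at most $M$. Since $\det \rho_{E,\ell}$ is the mod-$\ell$ cyclotomic character (and so surjective onto $\FF_\ell^\times$), any failure of surjectivity places the image inside a proper subgroup of $\GL_2(\FF_\ell)$ with full determinant; by the standard classification, any such subgroup lies in a Borel, a Cartan normalizer, or an exceptional subgroup whose image in $\PGL_2$ is $A_4$, $S_4$, or $A_5$. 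For $\ell > M$ each of these possibilities has index at least $\ell + 1 > M$ (the Borel providing the smallest), so $\rho_{E,\ell}$ is surjective whenever $\ell > M$ and $j_E \notin J$.

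To close the converse we must handle the finitely many non-CM $j$-invariants in $J$. For each such $j$, fix any $E_j/\QQ$ with $j_{E_j} = j$; Serre's theorem (Theorem \ref{T:Serre1}) provides a constant $c_j$ with $\rho_{E_j, \ell}$ surjective for all $\ell > c_j$. Because $j \neq 0, 1728$ (those $j$-invariants are CM), every other $E/\QQ$ with $j_E = j$ is a quadratic twist $E_j^d$ and satisfies $\rho_{E_j^d, \ell} \cong \chi_d \otimes \rho_{E_j, \ell}$ for the quadratic character $\chi_d$. Analyzing the joint image of $(\chi_d, \rho_{E_j, \ell}) \colon \Gal_\QQ \to \{\pm 1\} \times \GL_2(\FF_\ell)$ for odd $\ell$, and using that the only nontrivial $\{\pm 1\}$-valued character of $\GL_2(\FF_\ell)$ sends $-I$ to $+1$, one concludes that $\rho_{E_j^d, \ell}$ is surjective whenever $\rho_{E_j, \ell}$ is. Setting $c := \max(M, \max_j c_j)$ then verifies Conjecture \ref{C:Serre1}. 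The principal obstacle is this twist-uniformity step; once it is in hand the argument reduces to Theorem \ref{T:main ineffective} and elementary subgroup counts in $\GL_2(\FF_\ell)$.
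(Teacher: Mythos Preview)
Your proof is correct and follows essentially the same structure as the paper's: the forward direction is identical (apply Theorem~\ref{T:main ineffective} and adjoin CM $j$-invariants), and the converse in both cases rests on the observation that non-surjectivity of $\rho_{E,\ell}$ forces the adelic index to be at least of order $\ell$.

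The one place you diverge is in handling the finitely many non-CM $j$-invariants in $J$. You prove by hand, via the twist formula $\rho_{E_j^d,\ell}\cong\chi_d\otimes\rho_{E_j,\ell}$ and a Goursat analysis, that surjectivity of $\rho_{E,\ell}$ depends only on $j_E$ for odd $\ell$. The paper instead invokes Corollary~\ref{C:index depends on j}, already established in \S\ref{S:commutator}, which says the full adelic index $[\GL_2(\Zhat):\rho_E(\Gal_\QQ)]$ depends only on $j_E$; combined with Serre's theorem this immediately bounds the index uniformly over all non-CM $E/\QQ$, and then Lemma~\ref{L:easy connection} finishes. Your twist argument is correct and pleasantly self-contained, but it is doing work the paper has already packaged: you could simply cite Corollary~\ref{C:index depends on j} and shorten the converse considerably. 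Conversely, your approach has the minor advantage of not relying on the commutator-subgroup machinery of \S\ref{S:commutator} (and in particular not on Kronecker--Weber), so it is slightly more elementary at that step.
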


For each integer $n\geq 1$, let $J_n$ be the set of $j\in \QQ$ that occur as the $j$-invariant of some elliptic curve $E$ over $\QQ$ with $[\GL_2(\Zhat):\rho_E(\Gal_\QQ)]=n$.    The following theorem shows that in Theorems~\ref{T:main ineffective} and \ref{T:index Q}, we cannot replace $\calI$ by a smaller set.

\begin{thm} \label{T:Jn}
For any integer $n\geq 1$, the set $J_n$ is infinite if and only if $n\in \calI$.
\end{thm}

\begin{remark} \label{R:main}
\begin{romanenum}
\item \label{R:main a}
Assuming Conjecture~\ref{C:Serre1},  Theorem~\ref{T:index Q} and Serre's theorem implies that there is an absolute constant $C$ such that $[\GL_2(\Zhat): \rho_{E}(\Gal_\QQ)] \leq C$ for all non-CM elliptic curves $E$ over $\QQ$.   

\item
The set $J$ in Theorem~\ref{T:index Q} contains more than the thirteen $j$-invariants coming from those elliptic curves over $\QQ$ with complex multiplication.    For example, the set $J$ contains $-7\cdot 11^3$ and $-7\cdot  137^3 \cdot 2083^3$ which arise from the two non-cuspidal rational points of $X_0(37)$, see \cite{MR0366930}.  If $E/\QQ$ is an elliptic curve with $j$-invariant $-7\cdot 11^3$ or $-7\cdot  137^3 \cdot 2083^3$, then one can show that $[\GL_2(\Zhat):\rho_E(\Gal_\QQ)] \geq 2736$.

\item
In our proofs of Theorems~\ref{T:main ineffective} and \ref{T:index Q}, the finite set $J$ that arises is ineffective.  The ineffectiveness arises from an application of Faltings' theorem to a finite number of modular curves of genus at least $2$.
\end{romanenum}
\end{remark}

\subsection{Overview}

In \S\ref{S:commutator}, we show that the index of $\rho_E(\Gal_\QQ)$ in $\GL_2(\Zhat)$ depends only on its commutator subgroup.     In \S\ref{S:modular curves}, we give some background on modular curves; for a fixed group $G$ of $\GL_2(\ZZ/N\ZZ)$ containing $-I$, its rational points will describe the elliptic curves $E/\QQ$ with $j_E\notin\{0,1728\}$ for which $\rho_{E,N}(\Gal_\QQ)$ is conjugate to a subgroup of $G$.  

In \S\ref{P:theoretic work}, we prove a version of Theorem~\ref{T:main ineffective} with $\calI$ replaced by another finite set $\scrI$ that is defined in terms of the congruence subgroups of $\SL_2(\ZZ)$ with genus $0$ or $1$.   Here we use Faltings' theorem to deal with rational points of several modular curves with genus at least $2$.

In \S\ref{S:index}, we describe how to compute the set $\scrI$; it agrees with our set $\calI$.  Here, and throughout the paper, we avoid computing models for modular curves.   For a genus $0$ modular curve, we use the Hasse principle to determine whether it is isomorphic to $\PP^1_\QQ$.   We compute the Jacobian of genus $1$ modular curves, up to isogeny, by counting their $\FF_p$-points via the moduli interpretation.  We also make use of the classification of genus $0$ and $1$ congruence subgroups due to Cummin and Pauli.

Finally, in \S\ref{S:proofs} we complete the proofs of Theorems~\ref{T:main ineffective}, \ref{T:index Q} and \ref{T:Jn}.

\subsection{Notation}
Fix a positive integer $m$.  Let $\ZZ_m$ be the ring that is the inverse limit of the rings $\ZZ/m^i\ZZ$ with respect to the reduction maps; equivalently, the inverse limit of $\ZZ/N\ZZ$, where $N$ divides some power of $m$.   We will make frequent use of the identifications $\ZZ_m= \prod_{\ell | m } \ZZ_\ell$ and $\Zhat = \prod_{\ell} \ZZ_\ell$, where $\ell$ denotes a prime.    In particular, $\ZZ_m$ depends only on the primes dividing $m$.

For a subgroup $G$ of $\GL_2(\ZZ/m\ZZ)$, $\GL_2(\ZZ_m)$ or $\GL_2(\Zhat)$ and an integer $N$ dividing $m$,  we denote by $G(N)$ the image of the group $G$ in $\GL_2(\ZZ/N\ZZ)$ under reduction modulo $N$.

All profinite groups will be considered with their profinite topologies.   The \emph{commutator subgroup} of a profinite group $G$ is the closed subgroup $G'$ generated by its commutators.

For each prime $p$, let $v_p \colon \QQ^\times \twoheadrightarrow \ZZ$ be the $p$-adic valuation.

\subsection*{Acknowledgments}
Thanks to Andrew Sutherland and David Zureick-Brown.    We have made use of some of the \texttt{Magma} code from \cite{Drew-Galois}.

The computations in \S\ref{S:index} were performed using the \texttt{Magma} computer algebra system \cite{Magma}; code can be found at \url{https://github.com/davidzywina/PossibleIndices}

\section{The commutator subgroup of the image of Galois} \label{S:commutator}

Let $E$ be a non-CM elliptic curve defined over $\QQ$.   Using the Weil pairing on the groups $E[N]$, one can show that the homomorphism $\det\circ \rho_E \colon \Gal_\QQ \to \Zhat^\times$ is equal to the cyclotomic character $\chi$.   Recall that $\chi\colon \Gal_\QQ \to \Zhat^\times$ satisfies $\sigma(\zeta)=\zeta^{\chi(\sigma) \bmod{n}}$ for any integer $n\geq 1$, where $\zeta \in \Qbar$ is an $n$-th root of unity and $\sigma\in \Gal_\QQ$.  

We first show that index of $\rho_E(\Gal_\QQ)$ in $\GL_2(\Zhat)$ is determined by its commutator subgroup. 

\begin{prop} \label{P:EC equality}
We have $[\GL_2(\Zhat):\rho_E(\Gal_\QQ)]  = [\SL_2(\Zhat)\colon \rho_E(\Gal_{\QQ})']$.
\end{prop}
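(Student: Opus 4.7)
The plan is to separate the problem into two parts: first, reduce the index computation in $\GL_2(\Zhat)$ to one inside $\SL_2(\Zhat)$; second, identify $H \cap \SL_2(\Zhat)$ with $H'$, where $H := \rho_E(\Gal_\QQ)$.

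For the first reduction, I would exploit the fact recalled just before the proposition: $\det \circ \rho_E$ is the cyclotomic character $\chi$, which maps $\Gal_\QQ$ onto $\Zhat^\times$. Consequently $\det(H) = \Zhat^\times$, which forces $H \cdot \SL_2(\Zhat) = \GL_2(\Zhat)$. The second isomorphism theorem then yields
\[
[\GL_2(\Zhat) : H] \;=\; [H \cdot \SL_2(\Zhat) : H] \;=\; [\SL_2(\Zhat) : H \cap \SL_2(\Zhat)].
\]

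For the second step, one direction is immediate: $H' \subseteq H \cap \SL_2(\Zhat)$, since every commutator has determinant $1$. To obtain the reverse inclusion, it suffices to show that the determinant descends to an isomorphism $\det \colon H^{\ab} \xrightarrow{\sim} \Zhat^\times$, for then $H' = \ker(\det|_H) = H \cap \SL_2(\Zhat)$. The surjection $\rho_E \colon \Gal_\QQ \twoheadrightarrow H$ induces a surjection $\Gal_\QQ^{\ab} \twoheadrightarrow H^{\ab}$, and post-composition with $\det \colon H^{\ab} \to \Zhat^\times$ recovers $\chi$ (viewed as a map on $\Gal_\QQ^{\ab}$). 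By Kronecker--Weber, $\chi$ identifies $\Gal_\QQ^{\ab}$ with $\Zhat^\times$, so the composition $\Gal_\QQ^{\ab} \twoheadrightarrow H^{\ab} \xrightarrow{\det} \Zhat^\times$ is an isomorphism. Since the first arrow is surjective and the composition is bijective, both arrows must be isomorphisms; in particular $\det \colon H^{\ab} \xrightarrow{\sim} \Zhat^\times$.

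Combining the two identities gives the claimed equality. I do not anticipate any serious obstacle: the argument reduces to the Weil-pairing identity $\det \circ \rho_E = \chi$ together with Kronecker--Weber. The only mild subtlety is bookkeeping in the profinite topology, since $H'$ is the \emph{closed} commutator subgroup per the Notation section; but $H$ is compact Hausdorff and the maps in play are all continuous, so passage to (topological) abelianizations behaves as expected.
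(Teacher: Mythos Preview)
Your proof is correct and follows essentially the same route as the paper: reduce to $[\SL_2(\Zhat):H\cap\SL_2(\Zhat)]$ via surjectivity of $\det\circ\rho_E=\chi$, then use Kronecker--Weber to identify $H\cap\SL_2(\Zhat)$ with $H'$. The paper phrases the second step in terms of fields, writing $H\cap\SL_2(\Zhat)=\rho_E(\Gal_{\QQ^{\cyc}})=\rho_E(\Gal_{\QQ^{\ab}})=\rho_E(\Gal_\QQ)'$, whereas you phrase it via abelianizations, but the content is identical.
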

\begin{proof}    The character $\chi$ is surjective, so $\det(\rho_E(\Gal_\QQ))=\Zhat^\times$ and hence $\rho_E(\Gal_\QQ)\cap \SL_2(\Zhat) = \rho_E(\Gal_{\QQ^\cyc})$, where $\QQ^\cyc$ is the cyclotomic extension of $\QQ$.  We thus have 
\[
[\GL_2(\Zhat):\rho_E(\Gal_\QQ)]  = [\SL_2(\Zhat):\rho_E(\Gal_\QQ)\cap \SL_2(\Zhat)]= [\SL_2(\Zhat)\colon \rho_E(\Gal_{\QQ^\cyc})].
\]   
It thus suffices to show that $\rho_E(\Gal_{\QQ^\cyc})$ equals $\rho_E(\Gal_{\QQ^\ab})=\rho_E(\Gal_\QQ)'$, where $\QQ^\ab \subseteq \Qbar$ is the maximal abelian extension of $\QQ$.  This follows from the Kronecker-Weber theorem which says that $\QQ^{\cyc}=\QQ^{\ab}$.
\end{proof}
  
\begin{remark}
\begin{romanenum}
\item
One can show that there are infinitely many different groups of the form $\rho_E(\Gal_\QQ)$ as $E$ varies over non-CM elliptic curves over $\QQ$; moreover, there are infinitely many such groups with index $2$ in $\GL_2(\Zhat)$.   One consequence of Proposition~\ref{P:EC equality} is that to compute the index $[\GL_2(\Zhat):\rho_E(\Gal_\QQ)]$ one does {not} need to know the full group $\rho_E(\Gal_\QQ)$, only $\rho_E(\Gal_\QQ)'$.

Conjecturally, there are only a finite number of subgroups of $\SL_2(\Zhat)$ of the form $\rho_E(\Gal_\QQ)'$ with a non-CM $E/\QQ$.   Indeed, suppose that Conjecture~\ref{C:Serre1} holds.      Remark~\ref{R:main}(\ref{R:main a}) and Proposition~\ref{P:EC equality} implies that the index of $[\SL_2(\Zhat): \rho_E(\Gal_\QQ)']$ is uniformly bounded for non-CM $E/\QQ$.   The finite number of possible groups of the form $\rho_E(\Gal_\QQ)'$ follows from their only being finitely many open subgroup of $\SL_2(\Zhat)$ of a given index.

\item
For a non-CM elliptic curve $E$ over a number field $K$, a similar argument shows that
\[
[\GL_2(\Zhat):\rho_E(\Gal_K)]  \leq [\Zhat^\times: \chi(\Gal_K)]\cdot [\SL_2(\Zhat)\colon \rho_E(\Gal_{K})'].
\]
The inequality may be strict if $K\neq \QQ$ (the cyclotomic extension of $K$ does not agree with the maximal abelian extension of $K$).
\end{romanenum}
\end{remark}

The following corollary show that for an elliptic curve $E/\QQ$, the index of $\rho_E(\Gal_\QQ)$ in $\GL_2(\Zhat)$ depends only on the $\Qbar$-isomorphism class of $E$.  In particular, the $j$-invariant is the correct notion to use in Theorems~\ref{T:index Q} and \ref{T:Jn}.  

\begin{cor} \label{C:index depends on j}
For an elliptic curve $E$ over $\QQ$, the index $[\GL_2(\Zhat): \rho_{E}(\Gal_\QQ)]$ depends only on the $j$-invariant of $E$.
\end{cor}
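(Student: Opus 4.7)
The plan is to combine the classification of $\QQ$-forms of a given elliptic curve (i.e., twists) with Proposition \ref{P:EC equality}, which already reduces the index to an invariant of the commutator subgroup $\rho_E(\Gal_\QQ)'$. So it suffices to show that this commutator subgroup depends only on $j_E$.

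If $E/\QQ$ has CM, then $j_E$ is one of the thirteen CM values; any elliptic curve over $\QQ$ with that $j$-invariant has CM over $\Qbar$, so its Galois image lies in the normalizer of a Cartan subgroup of $\GL_2(\Zhat)$, which has infinite index. Both $[\GL_2(\Zhat):\rho_E(\Gal_\QQ)]$ and $[\GL_2(\Zhat):\rho_{E'}(\Gal_\QQ)]$ are $\infty$, so the claim is trivial for CM $j$-invariants.

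In the non-CM case, $j_E\notin\{0,1728\}$ and $\Aut_{\Qbar}(E)=\{\pm 1\}$. Let $E'/\QQ$ be any elliptic curve with $j_{E'}=j_E$; then $E'$ is a quadratic twist $E_d$ of $E$ for some $d\in\QQ^\times/(\QQ^\times)^2$. Fixing a $\Qbar$-isomorphism $\varphi\colon E_d\to E$ defined over $\QQ(\sqrt{d})$, a standard Galois cocycle computation gives $\sigma(\varphi)=\psi_d(\sigma)\varphi$, where $\psi_d\colon\Gal_\QQ\to\{\pm 1\}$ is the quadratic character cutting out $\QQ(\sqrt{d})/\QQ$. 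Transporting the Galois action on $E_d[N]$ through $\varphi$ and assembling over all $N$ yields the key identity
\[
\rho_{E'}(\sigma)=\psi_d(\sigma)\cdot\rho_E(\sigma)\qquad\text{for all }\sigma\in\Gal_\QQ,
\]
where $\psi_d(\sigma)\in\{\pm 1\}$ is regarded as the central element $\pm I\in\GL_2(\Zhat)$.

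Since $\pm I$ is central in $\GL_2(\Zhat)$, the twisting scalar cancels out of every commutator: $[\rho_{E'}(\sigma),\rho_{E'}(\tau)]=[\rho_E(\sigma),\rho_E(\tau)]$ for all $\sigma,\tau$, so the closed commutator subgroups agree, $\rho_{E'}(\Gal_\QQ)'=\rho_E(\Gal_\QQ)'$. Applying Proposition \ref{P:EC equality} to $E$ and to $E'$ then yields $[\GL_2(\Zhat):\rho_E(\Gal_\QQ)]=[\GL_2(\Zhat):\rho_{E'}(\Gal_\QQ)]$. There is no serious obstacle here; the only point requiring care is that the quadratic twist character lands in the scalar center of $\GL_2(\Zhat)$, which is precisely what makes the commutator subgroup twist-invariant and lets Proposition \ref{P:EC equality} finish the job.
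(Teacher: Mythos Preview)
Your proof is correct and follows essentially the same strategy as the paper: handle the CM case trivially, then in the non-CM case use that two curves with the same $j$-invariant differ by a quadratic twist, show the commutator subgroups of the Galois images coincide, and invoke Proposition~\ref{P:EC equality}. The only cosmetic difference is in how you verify the commutator subgroups agree: you cancel the central character $\psi_d$ directly in each commutator, whereas the paper observes that $\rho_{E_1}|_{\Gal_L}=\rho_{E_2}|_{\Gal_L}$ and then uses $L\subseteq\QQ^{\ab}$ (Kronecker--Weber) together with $\rho_E(\Gal_\QQ)'=\rho_E(\Gal_{\QQ^{\ab}})$.
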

\begin{proof}
Suppose that $E_1$ and $E_2$ are elliptic curves over $\QQ$ with the same $j$-invariant (and hence isomorphic over $\Qbar$).  If $E_1$ (and hence $E_2)$ has complex multiplication, then both indices are infinite.   We may thus assume that $E_1$ and $E_2$ are non-CM.  Since they have the same $j$-invariant, $E_1$ and $E_2$ are isomorphic over a quadratic extension $L$ of $\QQ$.  Fixing such an isomorphism, we can identify the representations $\rho_{E_1}|_{\Gal_L}$ and $\rho_{E_2}|_{\Gal_L}$.  We have $L\subseteq \QQ^\ab$, so the groups $\rho_{E_1}(\Gal_{\QQ^\ab}) = \rho_{E_1}(\Gal_\QQ)'$ and $\rho_{E_2}(\Gal_{\QQ^\ab}) = \rho_{E_2}(\Gal_\QQ)'$ are equal under this identification.   The corollary then follows immediately from Proposition~\ref{P:EC equality}.
\end{proof}

\section{Modular curves} \label{S:modular curves}

Fix a positive integer $N$ and a subgroup $G$ of $\GL_2(\ZZ/N\ZZ)$ containing $-I$ that satisfies $\det(G)=(\ZZ/N\ZZ)^\times$.  Denote by $Y_G$ and $X_G$, the $\ZZ[1/N]$-schemes that are the coarse space of the algebraic stacks $\mathscr{M}_G^\circ[1/N]$ and $\mathscr{M}_G[1/N]$, respectively, from \cite{MR0337993}*{IV~\S3}.     We refer to \cite{MR0337993}*{IV} for further details.

The $\ZZ[1/N]$-scheme $X_G$ is smooth and proper and $Y_G$ is an open subscheme of $X_G$.  The complement of $Y_G$ in $X_G$, which we denote by $X_G^\infty$, is a finite \'etale scheme over $\ZZ[1/N]$, see \cite{MR0337993}*{IV~\S5.2}.  The fibers of $X_G$ are geometrically irreducible, see \cite{MR0337993}*{IV~Corollaire~5.6}; this uses our assumption that $\det(G)=(\ZZ/N\ZZ)^\times$.

In later sections, we will mostly work with the generic fiber of $X_G$, which we will also denote by $X_G$, which is a smooth, projective and geometrically irreducible curve over $\QQ$ (similarly, we will work with the generic fiber of $Y_G$ which will be a non-empty open subvariety of $X_G$).\\

Fix a field $k$ whose characteristic does not divide $N$; for simplicity, we will also assume that $k$ is perfect.  Choose an algebraic closure $\kbar$ of $k$ and set $\Gal_k := \Gal(\kbar/k)$.

In \S\ref{SS:points of $Y_G$}, we use the moduli property of $\mathscr{M}_G^\circ[1/N]$ to give a description of the sets $Y_G(k)$ and $Y_G(\kbar)$.  In \S\ref{SS:j-line}, we describe the natural morphism from $Y_G$ to the $j$-line.   In \S\ref{SS:cusps}, we give a way to compute the cardinality of the finite set $X_G^\infty(k)$ of \emph{cusps} of $X_G$ that are defined over $k$.  In \S\ref{SS:real points}, we determine when the set $Y_G(\RR)$ is non-empty.  In \S\ref{SS:complex points}, we will observe that $Y_G(\CC)$ as a Riemann surface is isomorphic to the quotient of the upper-half plane by the congruence subgroup $\Gamma_G$ consisting of $A\in \SL_2(\ZZ)$ for which $A$ modulo $N$ lies $G$.   Finally in \S\ref{SS:Fp points}, we explain how to compute the cardinality of $X_G(\FF_p)$ for primes $p\nmid 6N$.

\subsection{Points of $Y_G$} \label{SS:points of $Y_G$}

For an elliptic curve $E$ over $\kbar$, let $E[N]$ be the $N$-torsion subgroup of $E(\kbar)$.  A \emph{$G$-level structure} for $E$ is an equivalence class $[\alpha]_G$ of group isomorphisms $\alpha\colon E[N] \xrightarrow{\sim} (\ZZ/N\ZZ)^2$, where we say that $\alpha$ and $\alpha'$ are equivalent if $\alpha= g\circ \alpha'$ for some $g\in G$.   We say that two pairs $(E,[\alpha]_G)$ and $(E',[\alpha']_G)$, both consisting of an elliptic curve over $\kbar$ and a $G$-level structure, are \emph{isomorphic} if there is an isomorphism $\phi\colon E\to E'$ of elliptic curves such that $[\alpha]_G=[\alpha' \circ \phi]_G$, where we also denote by $\phi$ the isomorphism $ E[N]\to E'[N]$, $P\mapsto \phi(P)$.

From \cite{MR0337993}*{IV~Definition~3.2},  $\mathscr{M}_G^\circ[1/N](\kbar)$ is the category with objects $(E,[\alpha]_G)$, i.e., elliptic curves over $\kbar$ with a $G$-level structure, and morphisms being the isomorphisms between such pairs.   Since $Y_G$ is the coarse space of $\mathscr{M}_G^\circ[1/N]$, we find that $Y_G(\kbar)$ is the set of isomorphisms classes in $\mathscr{M}_G^\circ[1/N](\kbar)$.\\

The functoriality of $\mathscr{M}_G^\circ[1/N]$, gives an action of the group $\Gal_k$ on $Y_G(\kbar)$.    Take any $\sigma \in \Gal_k$.    Let $E^\sigma$ be the base extension of $E/\kbar$ by the morphism $\Spec \kbar \to \Spec \kbar$ coming from $\sigma$.    The natural morphism $E^\sigma \to E$ of schemes induces a group isomorphism $E^\sigma[N] \to E[N]$ which, by abuse of notation, we will denote by $\sigma^{-1}$.     More explicitly, if $E$ is given by a Weierstrass equation $y^2+a_1xy+a_3y = x^3 +a_4 x + a_6$ with $a_i \in \kbar$, we may take $E^\sigma$ to be the curve defined by $y^2+\sigma(a_1)xy+\sigma(a_3)y = x^3 +\sigma(a_4) x + \sigma(a_6)$; the isomorphism $E^\sigma[N] \to E[N]$ is then given by $(x,y)\mapsto (\sigma^{-1}(x),\sigma^{-1}(y))$.    For a point $P \in Y_G(\kbar)$ represented by a pair $(E,[\alpha]_G)$, the point $\sigma(P) \in Y_G(\kbar)$ is represented by $(E^\sigma, [\alpha \circ \sigma^{-1}]_G)$.\\

Since $k$ is perfect, $Y_G(k)$ is the subset of $Y_G(\kbar)$ stable under the action of $\Gal_k$.   The following lemma describes $Y_G(k)$.   For an elliptic curve $E$ over $k$, let $E[N]$ be the $N$-torsion subgroup of $E(\kbar)$.   Each $\sigma\in \Gal_k$ gives an isomorphism $E[N]\xrightarrow{\sim} E[N]$, $P\mapsto \sigma^{-1}(P)$ that we will also denote by $\sigma^{-1}$.    

\begin{lemma}  \label{L:YG(k) meaning}
\begin{romanenum}
\item \label{L:YG(k) meaning i}
Every point $P \in Y_G(k)$ is represented by a pair $(E,[\alpha]_G)$ with $E$ defined over $k$.
\item \label{L:YG(k) meaning ii}
Let $P \in Y_G(\kbar)$ be a point represented by a pair $(E,[\alpha]_G)$ with $E$ defined over $k$.   Then $P$ is an element of $Y_G(k)$ if and only if for all $\sigma \in \Gal_k$, we have an equality
\[
\alpha \circ \sigma^{-1} = g\circ \alpha \circ \phi
\]
of isomorphisms $E[N]\xrightarrow{\sim} (\ZZ/N\ZZ)^2$ for some $\phi\in \Aut(E_{\kbar})$ and $g\in G$.
\end{romanenum}
\end{lemma}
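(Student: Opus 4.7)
The plan is to unpack both claims directly from the moduli description of $Y_G(\kbar)$ and the explicit Galois action recalled just above the lemma; no deep input is needed.

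For part (\ref{L:YG(k) meaning i}), I would first note that the forgetful morphism, at the level of coarse spaces, gives a $\ZZ[1/N]$-morphism $Y_G \to \AA^1_j$ sending $[(E,[\alpha]_G)] \mapsto j(E)$. In particular, for $P\in Y_G(k)$ one has $j_0 := j(P) \in k$. Standard Weierstrass formulas produce an elliptic curve $E_0/k$ with $j(E_0)=j_0$. Starting from any representative $(E,[\alpha]_G)$ of $P$ over $\kbar$, the equality $j(E)=j(E_0)$ yields a $\kbar$-isomorphism $\psi\colon E\xrightarrow{\sim}(E_0)_{\kbar}$. Setting $\alpha_0 := \alpha\circ\psi^{-1}$ (where $\psi$ is read on $N$-torsion), the pair $(E_0,[\alpha_0]_G)$ is isomorphic to $(E,[\alpha]_G)$ via $\psi^{-1}$, hence it represents the same point $P$ and has $E_0$ defined over $k$.

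For part (\ref{L:YG(k) meaning ii}), I would use the formula $\sigma(P)=[(E^\sigma,[\alpha\circ\sigma^{-1}]_G)]$ stated just above the lemma. Since $E$ is already defined over $k$, the curve $E^\sigma$ equals $E$ canonically, and the map $E^\sigma[N]\to E[N]$ induced by base change is exactly the map $\sigma^{-1}$ from the lemma statement. Because $k$ is perfect, $P\in Y_G(k)$ is equivalent to $\sigma(P)=P$ in $Y_G(\kbar)$ for every $\sigma\in \Gal_k$, i.e.\ to $(E,[\alpha\circ\sigma^{-1}]_G)$ and $(E,[\alpha]_G)$ being isomorphic objects of $\mathscr{M}_G^\circ[1/N](\kbar)$. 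By the definition of morphism in that category, such an isomorphism is a $\phi\in\Aut(E_{\kbar})$ with $[\alpha\circ\sigma^{-1}]_G=[\alpha\circ\phi]_G$; unfolding the equivalence relation defining $[\,\cdot\,]_G$, this is exactly the condition $\alpha\circ\sigma^{-1}=g\circ\alpha\circ\phi$ for some $g\in G$, as required.

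The argument is essentially bookkeeping, and there is no serious obstacle. The only point that requires care is keeping the conventions consistent: $G$ acts on the left on level structures $\alpha$, while isomorphisms of pairs act on the right as $\alpha\mapsto\alpha\circ\phi$, and the Galois action involves $\sigma^{-1}$ on $N$-torsion (induced by base change via $\sigma$). Matching these three conventions correctly is what produces the precise equation $\alpha\circ\sigma^{-1}=g\circ\alpha\circ\phi$ in (\ref{L:YG(k) meaning ii}).
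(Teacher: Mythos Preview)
Your proposal is correct and follows essentially the same approach as the paper. The only minor difference is in part~(\ref{L:YG(k) meaning i}): the paper shows $j_E\in k$ by directly using the Galois invariance $\sigma(P)=P$ (which forces $E\cong E^\sigma$ and hence $\sigma(j_E)=j_E$), whereas you invoke the $j$-line morphism $Y_G\to\AA^1$ being defined over the base; these are two phrasings of the same fact, and part~(\ref{L:YG(k) meaning ii}) is argued identically in both.
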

\begin{proof}
First suppose that $(E,[\alpha]_G)$ represents a point $P \in Y_G(k)$.   To prove (\ref{L:YG(k) meaning i}) it suffices to show that $E$ is isomorphic over $\kbar$ to an elliptic curve defined over $k$.   So we need only show that $j_E$ is an element of $k$.   For any $\sigma \in \Gal_k$, the point $P = \sigma(P)$ is also represented by $(E^\sigma, [\alpha\circ \sigma^{-1}]_G)$.  This implies that $E$ and $E^\sigma$ are isomorphic and hence $\sigma(j_E) = j_E$.   We thus have $j_E \in k$ since $k$ is perfect.   

We now prove (\ref{L:YG(k) meaning ii}).  Let $P \in Y_G(\kbar)$ be a point represented by a pair $(E,[\alpha]_G)$ with $E$ defined over $k$.   Take any $\sigma \in \Gal_k$.    The point $\sigma(P)$ is represented by $(E,[\alpha\circ \sigma^{-1}]_G)$; we can make the identification $E=E^\sigma$ since $E$ is defined over $k$.    We have $\sigma(P)=P$ if and only if there is an automorphism $\phi \in \Aut(E_{\kbar})$ such that $[\alpha \circ \sigma^{-1}]_G = [\alpha \circ \phi]_G$.    Since $k$ is perfect, we have $P\in Y_G(k)$ if and only if for all $\sigma \in \Gal_k$, we have $[\alpha \circ \sigma^{-1}]_G = [\alpha \circ \phi]_G$ for some $\phi \in \Aut(E_{\kbar})$; this is a reformulation of part (\ref{L:YG(k) meaning ii}).
\end{proof}

\subsection{Morphism to the $j$-line} \label{SS:j-line}
If $G=\GL_2(\ZZ/N\ZZ)$, then there is only a single $G$-level structure for each elliptic curve.   There is an isomorphism $Y_{\GL_2(\ZZ/N\ZZ)} = \AA^1_{\ZZ[1/N]}$; on $\kbar$-points, it takes a point represented by a pair $(E,[\alpha]_G)$ to the $j$-invariant $j_E \in \kbar$.    

If $G'$ is a subgroup of $\GL_2(\ZZ/N\ZZ)$ containing $G$, then there is a natural morphism $Y_G \to Y_{G'}$.   In particular, $G'=\GL_2(\ZZ/N\ZZ)$ gives a morphism
\[
\pi_G \colon Y_G \to \AA^1_{\ZZ[1/N]}
\]
that maps a $\kbar$-point represented by a pair $(E,[\alpha]_G)$ to the $j$-invariant of $E$.\\

Fix an elliptic curve $E$ over $k$.  By choosing a basis for $E[N]$ as a $\ZZ/N\ZZ$-module, the Galois action on $E[N]$ can be expressed in terms of a representation $\rho_{E,N} \colon \Gal_k \to \GL_2(\ZZ/N\ZZ)$; this is the same as the earlier definition with $k=\QQ$.   The representation $\rho_{E,N}$ is uniquely determined up to conjugation by an element of $\GL_2(\ZZ/N\ZZ)$.

\begin{prop} \label{P:key}
Let $E$ be an elliptic curve over $k$ with $j_E \notin \{0,1728\}$.  The group $\rho_{E,N}(\Gal_k)$ is conjugate in $\GL_2(\ZZ/N\ZZ)$ to a subgroup of $G$ if and only if $j_E$ is an element of $\pi_G(Y_G(k))$.
\end{prop}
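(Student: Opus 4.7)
The plan is to use Lemma~\ref{L:YG(k) meaning} to translate the condition $j_E \in \pi_G(Y_G(k))$ into a statement about $G$-level structures, and to exploit two ingredients: the hypothesis $-I \in G$ and the fact that $j_E \notin \{0,1728\}$ forces $\Aut(E_{\kbar}) = \{\pm 1\}$, so that every twist of $E$ is quadratic.

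For the forward implication, I would assume that after some choice of basis of $E[N]$ we have $\rho_{E,N}(\Gal_k) \subseteq G$. Let $\alpha\colon E[N]\xrightarrow{\sim}(\ZZ/N\ZZ)^2$ be the corresponding isomorphism, so that $\alpha \circ \sigma = \rho_{E,N}(\sigma) \circ \alpha$ for all $\sigma\in\Gal_k$. The pair $(E,[\alpha]_G)$ represents a point $P\in Y_G(\kbar)$ with $\pi_G(P)=j_E$; to check that $P\in Y_G(k)$, I would verify the criterion in Lemma~\ref{L:YG(k) meaning}(\ref{L:YG(k) meaning ii}) by taking $\phi=\id$ and $g=\rho_{E,N}(\sigma)^{-1}\in G$, so that the required equality $\alpha \circ \sigma^{-1} = g\circ \alpha\circ \phi$ becomes immediate from the definition of $\rho_{E,N}$.

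For the reverse implication, suppose $j_E=\pi_G(P)$ for some $P\in Y_G(k)$. By Lemma~\ref{L:YG(k) meaning}(\ref{L:YG(k) meaning i}), I may represent $P$ by a pair $(E',[\alpha']_G)$ with $E'$ defined over $k$ and $j_{E'}=j_E$ (although $E'$ need not be $k$-isomorphic to $E$). Applying Lemma~\ref{L:YG(k) meaning}(\ref{L:YG(k) meaning ii}), for each $\sigma\in\Gal_k$ there exist $g\in G$ and $\phi\in\Aut(E'_{\kbar})$ with $\alpha'\circ \sigma^{-1}=g\circ \alpha'\circ \phi$. The hypothesis $j_E\notin\{0,1728\}$ restricts $\Aut(E'_{\kbar})$ to $\{\pm 1\}$, so $\alpha'\circ \phi\circ (\alpha')^{-1}=\pm I$; absorbing this $\pm I$ into $g$ (using $-I\in G$) yields $\rho_{E',N}(\sigma)^{-1}\in G$, and hence $\rho_{E',N}(\Gal_k)\subseteq G$.

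It remains to transfer this containment from $E'$ to $E$. Since $j_{E'}=j_E\notin\{0,1728\}$, $E$ is a quadratic twist of $E'$, associated to some character $\chi\colon \Gal_k\to\{\pm 1\}$. Fixing an isomorphism $E_{\kbar}\xrightarrow{\sim}E'_{\kbar}$ to transfer $\alpha'$ to a basis of $E[N]$, a short computation in any convenient model gives the standard twist relation $\rho_{E,N}(\sigma)=\chi(\sigma)\rho_{E',N}(\sigma)$, and because $-I\in G$ the image of $\rho_{E,N}$ in this basis still lies in $G$. Thus $\rho_{E,N}(\Gal_k)$ is conjugate to a subgroup of $G$. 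The main obstacle will be the careful bookkeeping in the reverse direction---tracking how $\alpha'$ interacts with Galois via the automorphism $\phi$, and how the twist character $\chi$ shifts the representation when passing from $E'$ to $E$---but the joint hypotheses $-I\in G$ and $j_E\notin\{0,1728\}$ ensure that all $\pm 1$ ambiguities are absorbed cleanly.
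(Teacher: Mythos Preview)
Your argument is correct and uses the same core ingredients as the paper: Lemma~\ref{L:YG(k) meaning}, the fact that $j_E\notin\{0,1728\}$ forces $\Aut(E_{\kbar})=\{\pm1\}$, and the hypothesis $-I\in G$.  The one difference is in the reverse direction: you represent $P$ by a pair $(E',[\alpha']_G)$ with $E'$ a possibly different $k$-form of $E$, prove containment for $\rho_{E',N}$, and then transfer to $E$ via the quadratic-twist relation.  The paper avoids this detour by observing that points of $Y_G(\kbar)$ are $\kbar$-isomorphism classes, so since any such $E'$ is $\kbar$-isomorphic to $E$ one may represent $P$ directly by a pair $(E,[\alpha]_G)$ and apply Lemma~\ref{L:YG(k) meaning}(\ref{L:YG(k) meaning ii}) with $E$ itself; the twist bookkeeping then disappears.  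Your route is a bit longer but perfectly sound, and it makes explicit the (useful) fact that the conclusion is invariant under quadratic twists.
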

\begin{proof}
First suppose that $\rho_{E,N}(\Gal_k)$ is conjugate to a subgroup of $G$.  There is thus an isomorphism $\alpha \colon E[N] \xrightarrow{\sim} (\ZZ/N\ZZ)^2$ such that $\alpha \circ \sigma \circ \alpha^{-1} \in G$ for all $\sigma\in \Gal_k$.  By Lemma~\ref{L:YG(k) meaning}(\ref{L:YG(k) meaning ii}), with $\phi=1$, the pair $(E,[\alpha]_G)$ represents a point $P\in Y_G(k)$.   Therefore, $j_E = \pi_G(P)$ is an element of $\pi_G(Y_G(k))$.

Now suppose that $j_E = \pi_G(P)$ for some point $P \in Y_G(k)$.  Lemma~\ref{L:YG(k) meaning} implies that $P$ is represented by a pair $(E,[\alpha]_G)$, where for all $\sigma\in \Gal_k$, we have $\alpha\circ \sigma^{-1} \circ \phi \circ \alpha^{-1} \in G$ for some automorphism $\phi$ of $E_{\kbar}$.   The assumption $j_E \notin \{0,1728\}$ implies that $\Aut(E_{\kbar}) = \{\pm 1\}$.   In particular, every automorphism of $E_{\kbar}$ acts on $E[N]$ as $\pm I$.   Since $G$ contains $-I$, we deduce that  $\alpha\circ \sigma^{-1} \circ \alpha^{-1} \in G$ for all $\sigma \in \Gal_k$.   We may choose $\rho_{E,N}$ so that $\rho_{E,N}(\sigma)=\alpha\circ \sigma \circ \alpha^{-1}$ for all $\sigma\in \Gal_k$, and hence $\rho_{E,N}(\Gal_k)$ is a subgroup of $G$.
\end{proof}

Take any $j\in k$ and fix an elliptic curve $E$ over $k$ with $j_E=j$.    Let $M$ be the group of isomorphisms $E[N] \xrightarrow{\sim} (\ZZ/N\ZZ)^2$.   Composition gives an action of the groups  $G$ and $\Aut(E_{\kbar})$ on $M$; they are left and right actions, respectively.   The map $\alpha \in M \mapsto (E,[\alpha]_G)$ induces a bijection
\begin{align} \label{E:bijection double coset}
G\backslash M /\Aut(E_{\kbar}) \xrightarrow{\sim} \{ P \in Y_G(\kbar) : \pi_G(P)=j\}. 
\end{align}
The group $\Gal_k$ acts on $M$ by the map $\Gal_k \times M \to M$, $(\sigma,\alpha)\mapsto \alpha \circ \sigma^{-1}$.     From the description of the Galois action in \S\ref{SS:points of $Y_G$}, we find that the bijection (\ref{E:bijection double coset}) respects the $\Gal_k$-actions.   The following lemma is now immediate (again we are using that $k$ is perfect).

\begin{lemma} \label{L:fiber cardinality}
The set $\{P \in Y_G(k): \pi_G(P) = j\}$ has the same cardinality as the subset of $G\backslash M /\Aut(E_{\kbar})$ fixed by the $\Gal_k$-action.
\end{lemma}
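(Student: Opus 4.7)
The plan is to deduce the lemma directly from the displayed bijection (\ref{E:bijection double coset}) by taking $\Gal_k$-fixed points on both sides, once I verify carefully that the bijection is $\Gal_k$-equivariant; the lemma then follows because $k$ is perfect, so $Y_G(k)= Y_G(\kbar)^{\Gal_k}$.

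First, I would pause to confirm that the map $\alpha \mapsto (E,[\alpha]_G)$ really does descend to a well-defined map on the double coset set $G\backslash M/\Aut(E_{\kbar})$, and that this descended map is a bijection onto the fiber $\pi_G^{-1}(j)\cap Y_G(\kbar)$. Well-definedness on the left $G$-action is immediate from the definition of $G$-level structure; for the right $\Aut(E_{\kbar})$-action, if $\alpha' = \alpha\circ \phi$ with $\phi\in \Aut(E_{\kbar})$, then $\phi$ itself furnishes an isomorphism $(E,[\alpha']_G)\cong (E,[\alpha]_G)$ in the sense of \S\ref{SS:points of $Y_G$}. Surjectivity uses that any point in the fiber is represented by a pair $(E',[\alpha']_G)$ with $j_{E'}=j$, and then any $\kbar$-isomorphism $E\to E'$ transports $\alpha'$ to some $\alpha \in M$. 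Injectivity is a direct unwinding of the definition of isomorphism of pairs.

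Next, I verify Galois-equivariance. Take $\alpha \in M$ representing a point $P\in Y_G(\kbar)$. By the recipe in \S\ref{SS:points of $Y_G$}, for $\sigma \in \Gal_k$ the point $\sigma(P)$ is represented by $(E^\sigma,[\alpha\circ \sigma^{-1}]_G)$; but since $E$ is defined over $k$ we may identify $E^\sigma=E$, and then $\sigma(P)$ is represented by $(E,[\alpha\circ\sigma^{-1}]_G)$. Under the bijection, this is precisely the image of $\alpha \circ \sigma^{-1}$, which matches the stated $\Gal_k$-action on $M$, and hence on the double coset set. Thus the bijection (\ref{E:bijection double coset}) is $\Gal_k$-equivariant.

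Finally, taking $\Gal_k$-fixed points on both sides of (\ref{E:bijection double coset}) yields the claim: the right-hand side becomes $\{P\in Y_G(\kbar)^{\Gal_k}:\pi_G(P)=j\}=\{P\in Y_G(k):\pi_G(P)=j\}$ using that $Y_G(k)=Y_G(\kbar)^{\Gal_k}$ (which uses $k$ perfect and $Y_G$ a variety over $k$), while the left-hand side becomes the $\Gal_k$-fixed subset of $G\backslash M/\Aut(E_{\kbar})$. There is no real obstacle here; the only subtle point worth spelling out is that the left and right actions on $M$ commute with the $\Gal_k$-action defined by $(\sigma,\alpha)\mapsto \alpha\circ\sigma^{-1}$, so that $\Gal_k$ genuinely acts on the double coset set and the statement "$\Gal_k$-fixed subset" makes sense. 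This commutation is immediate: the left $G$-action is by post-composition on the target $(\ZZ/N\ZZ)^2$, while the Galois action is by pre-composition on the source $E[N]$; and the right $\Aut(E_{\kbar})$-action is also by pre-composition on $E[N]$, but $\Gal_k$ acts on $\Aut(E_{\kbar})$ and one checks $(\alpha\circ\phi)\circ\sigma^{-1} = (\alpha\circ\sigma^{-1})\circ (\sigma\phi\sigma^{-1})$, which stays in the same right coset.
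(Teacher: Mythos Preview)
Your proof is correct and follows exactly the approach the paper takes: the paper establishes the bijection (\ref{E:bijection double coset}), notes that it respects the $\Gal_k$-actions, and then declares the lemma ``immediate'' using that $k$ is perfect. You have simply filled in the details the paper leaves implicit, including the verification that the Galois action descends to the double coset set.
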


\subsection{Cusps} \label{SS:cusps}

In this section, we state an analogue of Lemma~\ref{L:fiber cardinality} for $X_G^\infty(k)$.       Let $M$ be the group of isomorphisms  $\mu_N \times \ZZ/N\ZZ  \xrightarrow{\sim} (\ZZ/N\ZZ)^2$, where $\mu_N$ is the group of $N$-th roots of unity in $\kbar$.   The group $\Gal_k$ acts on $M$ by the map $\Gal_k \times M \to M$, $(\sigma,\alpha)\mapsto \alpha \circ \sigma^{-1}$, where $\sigma^{-1}$ acts on $\mu_N$ as usual and trivially on $\ZZ/N\ZZ$.  Let $U$ be the subgroup of $\Aut(\mu_N\times \ZZ/N\ZZ)$ given by the matrices $\pm \left(\begin{smallmatrix}1 & u \\0 & 1\end{smallmatrix}\right)$ with $u\in \Hom(\ZZ/N\ZZ,\mu_N)$.   Composition gives an action of the groups  $G$ and $U$ on $M$; they are left and right actions, respectively.    Construction~5.3 of \cite{MR0337993}*{VI} shows that there is a bijection
\[
X_G^\infty(\kbar) \xrightarrow{\sim} G\backslash M / U
\]
that respects the actions of $\Gal_k$.    We thus have a bijection between $X_G^\infty(k)$ and the subset of $G\backslash M /U$ fixed by the action of $\Gal_k$.

Observe that the cardinality of $X_G^\infty(k)$ depends only on $G$ and the image of the character $\chi_N \colon \Gal_k \to (\ZZ/N\ZZ)^\times$ describing the Galois action on $\mu_N$, i.e., $\sigma(\zeta)=\zeta^{\chi_N(\sigma)}$ for all $\sigma \in \Gal_k$ and all $\zeta\in \mu_N$.    Let $B$ be the subgroup of $\GL_2(\ZZ/N\ZZ)$ consisting of matrices of the form $\left(\begin{smallmatrix}b & 0 \\0 & 1\end{smallmatrix}\right)$ with $b\in \chi_N(\Gal_k)$.    Let $U$ be the subgroup of $\GL_2(\ZZ/N\ZZ)$ generated by $-I$ and $\left(\begin{smallmatrix}1 & 1 \\0 & 1\end{smallmatrix}\right)$.       The group $B$ normalizes $U$ and hence right multiplication gives a well-defined action of $B$ on $G\backslash\GL_2(\ZZ/N\ZZ) / U$.   The following lemma is now immediate.

\begin{lemma} \label{L:cusp bijection}
The set $X_G^\infty(k)$ has the same cardinality as the subset of $G\backslash \GL_2(\ZZ/N\ZZ) / U$ fixed by right multiplication by $B$.
\end{lemma}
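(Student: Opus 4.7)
The plan is to promote the bijection $X_G^\infty(\kbar) \xrightarrow{\sim} G\backslash M/U$ (from Construction 5.3 of \cite{MR0337993}*{VI}, recalled just above the lemma) to a statement about $k$-rational points by pinning down an explicit isomorphism $M \xrightarrow{\sim} \GL_2(\ZZ/N\ZZ)$ that converts the abstractly defined $\Gal_k$-action on the left-hand side into right multiplication by $B$ on the right-hand side.

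First I would fix a primitive $N$-th root of unity $\zeta_N \in \mu_N$, giving the isomorphism $\mu_N \xrightarrow{\sim} \ZZ/N\ZZ$, $\zeta_N^a \mapsto a$, and hence an isomorphism $\mu_N \times \ZZ/N\ZZ \xrightarrow{\sim} (\ZZ/N\ZZ)^2$ which I call $\alpha_0$. Composition on the right with $\alpha_0^{-1}$ then identifies $M$ with $\GL_2(\ZZ/N\ZZ)$ via $g \mapsto g \circ \alpha_0$, and this identification is $G$-equivariant for the left action by composition. A direct check shows that under this identification the subgroup $U \subseteq \Aut(\mu_N \times \ZZ/N\ZZ)$ acting on $M$ by right composition becomes the subgroup $U \subseteq \GL_2(\ZZ/N\ZZ)$ generated by $-I$ and $\left(\begin{smallmatrix}1 & 1 \\ 0 & 1\end{smallmatrix}\right)$ defined in the lemma (using the natural identification $\Hom(\ZZ/N\ZZ, \mu_N) \cong \ZZ/N\ZZ$ coming from $\zeta_N$), acting by right multiplication.

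Next I would translate the Galois action. For $\sigma \in \Gal_k$, the automorphism $\sigma^{-1}$ of $\mu_N \times \ZZ/N\ZZ$ sends $(\zeta_N^a, n)$ to $(\zeta_N^{\chi_N(\sigma)^{-1} a}, n)$, so a direct computation gives
\[
\alpha_0 \circ \sigma^{-1} \;=\; b_\sigma \circ \alpha_0, \qquad \text{where } b_\sigma := \begin{pmatrix} \chi_N(\sigma)^{-1} & 0 \\ 0 & 1 \end{pmatrix} \in B.
\]
Consequently, if $\alpha = g \circ \alpha_0 \in M$, then $\sigma \cdot \alpha = \alpha \circ \sigma^{-1} = (g\, b_\sigma) \circ \alpha_0$, i.e., the $\Gal_k$-action on $M \cong \GL_2(\ZZ/N\ZZ)$ is precisely right multiplication by $b_\sigma$. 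As $\sigma$ ranges over $\Gal_k$, the elements $b_\sigma$ exhaust $B$, since $\chi_N(\Gal_k)$ is a subgroup of $(\ZZ/N\ZZ)^\times$ closed under inversion.

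Finally, because $B$ normalizes $U$ (as noted in the preceding paragraph), right multiplication by $B$ descends to a well-defined action on $G\backslash \GL_2(\ZZ/N\ZZ)/U$, compatible with the descent of the $\Gal_k$-action from $M$ to $G\backslash M/U$. Since $k$ is perfect, $X_G^\infty(k)$ is the $\Gal_k$-fixed subset of $X_G^\infty(\kbar)$, which via the $\Gal_k$-equivariant bijection of the preceding paragraph matches the $B$-fixed subset of $G\backslash \GL_2(\ZZ/N\ZZ)/U$, proving the lemma. There is no real obstacle here: the only thing to verify carefully is that the chosen identification $M \cong \GL_2(\ZZ/N\ZZ)$ simultaneously sends the abstract subgroup $U$ to the concrete $U$ and converts the twisted Galois action into right multiplication by $B$, both of which reduce to an explicit matrix computation.
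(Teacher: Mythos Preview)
Your proposal is correct and is exactly the argument the paper has in mind: the paper simply declares the lemma ``immediate'' from the preceding discussion, and what you have written is the explicit unwinding of that identification (fixing $\zeta_N$ to identify $M$ with $\GL_2(\ZZ/N\ZZ)$, checking that the abstract $U$ matches the concrete one, and that the $\Gal_k$-action becomes right multiplication by $B$). There is nothing to add or change.
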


\subsection{Real points} \label{SS:real points}

The following proposition tells us when $Y_G(\RR)$ is non-empty.

\begin{prop} \label{P:R-points for YG}
The set $Y_G(\RR)$ is non-empty if and only if $G$ contains an element that is conjugate in $\GL_2(\ZZ/N\ZZ)$ to $\left(\begin{smallmatrix}1 & 0 \\0 & -1\end{smallmatrix}\right)$ or $\left(\begin{smallmatrix}1 & 1 \\0 & -1\end{smallmatrix}\right)$.
\end{prop}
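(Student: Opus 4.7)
My plan is to reduce the question to the classical classification of the action of complex conjugation $c\in\Gal_\RR$ on the $N$-torsion of an elliptic curve over $\RR$, using the moduli description provided by Lemma~\ref{L:YG(k) meaning} and Proposition~\ref{P:key}. The key ingredient I would first establish is that for any elliptic curve $E/\RR$ the matrix $\rho_{E,N}(c)$ is $\GL_2(\ZZ/N\ZZ)$-conjugate to exactly one of $\left(\begin{smallmatrix}1 & 0 \\ 0 & -1\end{smallmatrix}\right)$ or $\left(\begin{smallmatrix}1 & 1 \\ 0 & -1\end{smallmatrix}\right)$, and that both cases occur for infinitely many $j_E\in\QQ\setminus\{0,1728\}$. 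This I would do via the uniformization $E(\CC)=\CC/(\ZZ+\ZZ\tau)$: one may take $\tau=it$ with $t>0$ when $E(\RR)$ has two connected components ($\Delta_E>0$), and $\tau=1/2+it$ with $t>0$ when $E(\RR)$ has one ($\Delta_E<0$). Reading off the action of $c$ on $\tfrac{1}{N}\Lambda/\Lambda$ in the basis $(1/N,\tau/N)$ yields the first matrix in the first case (since $c(\tau)=-\tau$) and the second matrix in the second case (since $c(\tau)=1-\tau$). A quick inspection of $y^2=x^3+ax+b$ shows that $\Delta<0$ for all $j_E\in(-\infty,0)\cup(0,1728)$ and $\Delta>0$ for all $j_E\in(1728,\infty)$, so both conjugation types are genuinely realized by real elliptic curves avoiding the two special $j$-invariants.

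With this classification in hand, the forward direction goes as follows. Since $Y_G$ is a smooth curve over $\QQ$, the set $Y_G(\RR)$ is either empty or a smooth $1$-manifold; if nonempty it is therefore infinite, and since $\pi_G\colon Y_G\to\AA^1_\QQ$ has finite fibers, the image $\pi_G(Y_G(\RR))\subseteq\RR$ is also infinite. I would pick any $P\in Y_G(\RR)$ with $j:=\pi_G(P)\notin\{0,1728\}$ and any elliptic curve $E/\RR$ with $j_E=j$. Proposition~\ref{P:key} then shows that $\rho_{E,N}(\Gal_\RR)$ is $\GL_2(\ZZ/N\ZZ)$-conjugate to a subgroup of $G$, so some conjugate of $\rho_{E,N}(c)$ lies in $G$, and by the classification this conjugate is itself conjugate to one of the two listed matrices, as required.

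For the reverse direction, suppose $G$ contains an element $g$ conjugate to one of the two matrices. Using the realization statement, I would choose $E/\RR$ with $j_E\notin\{0,1728\}$ whose complex conjugation type matches that of $g$. Then $\rho_{E,N}(c)$ is $\GL_2(\ZZ/N\ZZ)$-conjugate to $g$, so $\rho_{E,N}(\Gal_\RR)$ is conjugate to $\langle g\rangle\subseteq G$, and Proposition~\ref{P:key} yields a point of $Y_G(\RR)$ above $j_E$. The main subtlety is that the dichotomy between the two matrices must be genuine: for $N$ even they do lie in distinct $\GL_2(\ZZ/N\ZZ)$-conjugacy classes (distinguished by their images modulo $2$, one being $I$ and the other not), so the choice of matrix really does depend on the topology of $E(\RR)$, and conversely we need to know that both component-counts are attained by real elliptic curves with $j_E\notin\{0,1728\}$. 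The standing hypothesis $-I\in G$ enters exactly to absorb the $\phi=\pm 1\in\Aut(E_\CC)$ that appears in Lemma~\ref{L:YG(k) meaning}(\ref{L:YG(k) meaning ii}) when $j_E\notin\{0,1728\}$.
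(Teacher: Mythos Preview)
Your argument is correct and follows the same overall architecture as the paper's proof: classify the $\GL_2(\ZZ/N\ZZ)$-conjugacy class of $\rho_{E,N}(c)$ for $E/\RR$, show both classes are realized by curves with $j_E\notin\{0,1728\}$, invoke Proposition~\ref{P:key}, and dispose of the fibers over $\{0,1728\}$ via the fact that $Y_G(\RR)$ is either empty or a one-dimensional real manifold. The only difference is in how the classification is carried out. The paper argues more elementarily: it picks a real $N$-torsion point $P_1$ (using that the identity component of $E(\RR)$ is a circle), extends to a basis $\{P_1,P_2\}$, uses the Weil pairing to force $\det\rho_{E,N}(c)=-1$, and then makes a basis change $P_2\mapsto P_2+mP_1$ to normalize the upper-right entry to $0$ or $1$; the realization of both types is then checked by reducing to $N=2$ and counting real roots of the cubic. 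Your route via the uniformization $\CC/(\ZZ+\ZZ\tau)$ with $\tau=it$ or $\tau=\tfrac12+it$ is slightly more conceptual and has the pleasant side effect of identifying which matrix corresponds to which component-count (equivalently, to the sign of $\Delta$), something the paper's proof does not make explicit.
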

\begin{proof}
Let $E$ be any elliptic curve over $\RR$.   As a topological group, the identity component of $E(\RR)$ is isomorphic to $\RR/\ZZ$.   So there is a point $P_1 \in E(\RR)$ of order $N$.  Choose a second point $P_2\in E(\CC)$ so that $\{P_1,P_2\}$ is a basis of $E[N]$ as a $\ZZ/N\ZZ$-module.   Define $\rho_{E,N}$ with respect to this basis.

Let $\sigma\in \Aut(\CC/\RR)$ be the complex conjugation automorphism.  We have $\sigma(P_1)=P_1$ and $\sigma(P_2)=bP_1 + d P_2$ for some $b,d \in \ZZ/N\ZZ$, i.e.,  $\rho_{E,N}(\sigma) := \left(\begin{smallmatrix}1 & b \\0 & d\end{smallmatrix}\right) \in \GL_2(\ZZ/N\ZZ)$.   Using the Weil pairing, we find that $\det(\rho_{E,N}(\sigma))$ describes how $\sigma$ acts on the $N$-th roots of unity.   Since complex conjugation inverts roots of unity, we have $\det(\rho_{E,N}(\sigma))=-1$ and hence $d=-1$.   For a fixed $m \in \ZZ/N\ZZ$, define points $P_1':=P_1$ and $P_2' :=P_2+m P_1$.  The points $\{P_1',P_2'\}$ are a basis for $E[N]$, and we have $\sigma(P_1')=P_1'$ and 
\[
\sigma(P_2') = (bP_1-P_2) + m P_1 = -(P_2+mP_1) + (b+2m ) P_1= - P_2' + (b+2m) P_1'.
\]
We can choose $m$ so that $b+2m$ is congruent to $0$ or $1$ modulo $N$;  with such an $m$ and the choice of basis $\{P_1',P_2'\}$, the matrix $\rho_{E,N}(\sigma)$ will be $\left(\begin{smallmatrix}1 & 0 \\0 & -1\end{smallmatrix}\right)$ or $\left(\begin{smallmatrix}1 & 1 \\0 & -1\end{smallmatrix}\right)$.

We claim that both of the matrices $\left(\begin{smallmatrix}1 & 0 \\0 & -1\end{smallmatrix}\right)$ and $\left(\begin{smallmatrix}1 & 1 \\0 & -1\end{smallmatrix}\right)$ are conjugate to $\rho_{E,N}(\sigma)$ for some $E/\RR$ with $j_E \notin \{0,1728\}$.   This is clear if $N$ is odd since the two matrices are then conjugate (we could have solved for $m$ in either of the congruences above).     If $N$ is even, then it suffices to show that both possibilities occur when $N=2$; this is easy (if $E/\QQ$ is given by a Weierstrass equation $y^2=x^3+ax+b$, the two possibilities are distinguished by the number of real roots that $x^3+ax+b$ has).   

Using Proposition~\ref{P:key}, we deduce that $\pi_G(Y_G(\RR))-\{0,1728\}$ is non-empty if and only if $G$ contains an element that is conjugate in $\GL_2(\ZZ/N\ZZ)$ to $\left(\begin{smallmatrix}1 & 0 \\0 & -1\end{smallmatrix}\right)$ or $\left(\begin{smallmatrix}1 & 1 \\0 & -1\end{smallmatrix}\right)$.   To complete the proof of the proposition, we need to show that if $\pi_G(Y_G(\RR)) \subseteq \{0,1728\}$, then $\pi_G(Y_G(\RR))$ is empty.    So suppose that $\pi_G(Y_G(\RR)) \subseteq \{0,1728\}$ and hence $Y_G(\RR)$ is finite.   However, since $Y_G$ over $\QQ$ is a smooth, geometrically irreducible curve, the set $Y_G(\RR)$ is either empty or infinite.
\end{proof}

\subsection{Complex points} \label{SS:complex points}
The complex points $Y_G(\CC)$ form a Riemann surface.   In this section, we describe it as a familiar quotient of the upper half plane by a congruence subgroup.  

Let $\mathfrak{H}$ be the complex upper half plane.   For $z\in \mathfrak{H}$ and $\gamma=\left(\begin{smallmatrix}a & b \\c & d\end{smallmatrix}\right) \in \SL_2(\ZZ)$, set $\gamma(z) := (az+b)/(cz+d)$.   We let $\SL_2(\ZZ)$ act on the \emph{right} of $\mathfrak{H}$ by $\mathfrak{H}\times \SL_2(\ZZ) \to \mathfrak{H}$, $(z,\gamma)\mapsto \gamma^t(z)$, where $\gamma^t$ is the transpose of $\gamma$.   For a congruence subgroup $\Gamma$, the quotient $\mathfrak{H}/\Gamma$ is a smooth Riemann surface.  

We define the \defi{genus} of a congruence subgroup $\Gamma$ to be the genus of the Riemann surface $\mathfrak{H}/\Gamma$.  

\begin{remark}
One could also consider the quotient $\Gamma\backslash \mathfrak{H}$ of $\mathfrak{H}$ under the left action given by $(\gamma,z)\mapsto \gamma(z)$;  it is isomorphic to the Riemann surface $\mathfrak{H}/\Gamma$ (use that $\gamma^t = B \gamma^{-1} B^{-1}$ for all $\gamma \in \Gamma$, where $B=\left(\begin{smallmatrix} 0 & -1 \\1 & 0\end{smallmatrix}\right)$ ).    In particular, the genus of $\Gamma\backslash \mathfrak{H}$ agrees with the genus of $\Gamma$.
\end{remark}

Let $\Gamma_G$ be the congruence subgroup consisting of matrices $\gamma \in \SL_2(\ZZ)$ whose image modulo $N$ lies in $G$.     The image of $\Gamma_G$ modulo $N$ is $G\cap \SL_2(\ZZ/N\ZZ)$ since the reduction map $\SL_2(\ZZ)\to \SL_2(\ZZ/N\ZZ)$ is surjective.  In particular, $\Gamma_G$ depends only on the group $G \cap\SL_2(\ZZ/N\ZZ)$ and we have
\[
[\SL_2(\ZZ): \Gamma_G]= [\SL_2(\ZZ/N\ZZ) : G \cap\SL_2(\ZZ/N\ZZ)].
\]

\begin{prop} \label{P:isomorphic Riemann surfaces}
The Riemann surfaces $Y_G(\CC)$ and $\mathfrak{H}/\Gamma_G$ are isomorphic.   In particular, the genus of $Y_G$ is equal to the genus of $\Gamma_G$.
\end{prop}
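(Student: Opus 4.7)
The plan is to construct an explicit holomorphic bijection $\mathfrak{H}/\Gamma_G \to Y_G(\CC)$ and invoke the moduli description of $Y_G(\CC)$ from \S\ref{SS:points of $Y_G$}. For $\tau \in \mathfrak{H}$, set $\Lambda_\tau = \ZZ\tau + \ZZ$ and $E_\tau = \CC/\Lambda_\tau$, and define the level structure $\alpha_\tau \colon E_\tau[N] \xrightarrow{\sim} (\ZZ/N\ZZ)^2$ by sending the class of $(a\tau+b)/N$ to $(a,b)$. Let $\Phi \colon \mathfrak{H} \to Y_G(\CC)$ send $\tau$ to the isomorphism class of $(E_\tau, [\alpha_\tau]_G)$. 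Since the family $\{E_\tau\}$ depends holomorphically on $\tau$, the map $\Phi$ is holomorphic.

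Next I would determine the fibers of $\Phi$. An isomorphism in the moduli category from $(E_\tau, [\alpha_\tau]_G)$ to $(E_{\tau'}, [\alpha_{\tau'}]_G)$ amounts to a complex-analytic isomorphism $\phi \colon E_\tau \to E_{\tau'}$ with $[\alpha_\tau]_G = [\alpha_{\tau'} \circ \phi]_G$. By the classical uniformization, $\phi$ is multiplication by $1/(c\tau+d)$ for some $\gamma = \left(\begin{smallmatrix} a & b \\ c & d \end{smallmatrix}\right) \in \SL_2(\ZZ)$ with $\tau' = \gamma(\tau)$; any sign ambiguity is absorbed by the hypothesis $-I \in G$. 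A direct computation (writing $\omega_1 = \tau, \omega_2 = 1$ and inverting the relation $(c\tau+d)\omega_i' = \sum_j \gamma_{ij}\omega_j$) yields $\alpha_{\tau'} \circ \phi = (\gamma^{-1})^t \circ \alpha_\tau$, so the $G$-equivalence condition becomes $(\gamma^{-1})^t \in G \bmod N$, equivalently $\gamma^t \in \Gamma_G$. With the paper's right action $\tau \cdot \gamma = \gamma^t(\tau)$, this is exactly the statement that $\tau$ and $\tau'$ lie in a common $\Gamma_G$-orbit, so $\Phi$ factors through an injection $\overline{\Phi} \colon \mathfrak{H}/\Gamma_G \hookrightarrow Y_G(\CC)$.

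For surjectivity I would use the assumption $\det(G) = (\ZZ/N\ZZ)^\times$, which gives $G \cdot \SL_2(\ZZ/N\ZZ) = \GL_2(\ZZ/N\ZZ)$. Given any point of $Y_G(\CC)$ represented by $(E, [\alpha]_G)$, uniformize $E \cong E_\tau$; then $h := \alpha \circ \alpha_\tau^{-1}$ can be factored as $h = g s$ with $g \in G$ and $s \in \SL_2(\ZZ/N\ZZ)$. Lifting $s$ to some $\gamma \in \SL_2(\ZZ)$ via surjectivity of $\SL_2(\ZZ) \to \SL_2(\ZZ/N\ZZ)$ and replacing $\tau$ by the corresponding $\SL_2(\ZZ)$-translate, the change-of-basis formula from the previous paragraph shows $[\alpha]_G = [\alpha_\tau]_G$. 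Thus $\overline{\Phi}$ is a bijection.

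Finally, $\overline{\Phi}$ is a holomorphic bijection between two smooth one-dimensional complex analytic spaces, and is therefore a biholomorphism; the equality of genera follows. The main technical obstacle is the matrix computation comparing $\phi$ with the change of level structure and keeping the transpose arising from the right-action convention straight; a secondary point is verifying that $-I \in G$ ensures the coarse-moduli quotient $Y_G(\CC)$ matches the naive quotient $\mathfrak{H}/\Gamma_G$ at the elliptic points over $j \in \{0,1728\}$, where $\Aut(E_{\overline{\QQ}})$ is larger than $\{\pm 1\}$.
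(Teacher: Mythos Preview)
Your approach is correct and is the classical direct construction, but it differs from the paper's argument. The paper does not build the map by hand; instead it quotes the Deligne--Rapoport description \cite{MR0337993}*{IV~\S5.3}
\[
Y_G(\CC) \cong \big(X^\pm \times (G\backslash \GL_2(\ZZ/N\ZZ))\big)/\GL_2(\ZZ),
\]
and then reduces, via a chain of double-coset identifications using $\det(G)=(\ZZ/N\ZZ)^\times$, to the obvious map $\mathfrak{H}/\Gamma_G \to \big(\mathfrak{H}\times (H\backslash \SL_2(\ZZ/N\ZZ))\big)/\SL_2(\ZZ)$ with $H=G\cap\SL_2(\ZZ/N\ZZ)$. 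This buys the paper a clean argument with no explicit lattice bookkeeping: the transpose convention, the elliptic points, and the extra automorphisms at $j\in\{0,1728\}$ are all absorbed into the cited formula. Your route is more elementary and self-contained, at the cost of the matrix computation you flag; note that your stated concern about elliptic points is slightly misdirected --- every isomorphism $E_\tau\to E_{\tau'}$ already arises from some $\gamma\in\SL_2(\ZZ)$, so your fiber analysis is correct as a set-theoretic bijection, and the only remaining issue (holomorphy of $\overline\Phi$ at the ramification points of $\mathfrak{H}\to\mathfrak{H}/\Gamma_G$) is handled by Riemann's removable singularity theorem.
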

\begin{proof}
Set $X^{\pm}:=\CC-\RR$; we let $\GL_2(\ZZ)$ act on the right in the same manner $\SL_2(\ZZ)$ acts on $\mathfrak{H}$.   We also let $\GL_2(\ZZ)$ act on the right of $G\backslash \GL_2(\ZZ/N\ZZ)$ by right multiplication.  From \cite{MR0337993}*{IV~\S5.3}, we have an isomorphism
\[
Y_G(\CC) \cong  \big(X^\pm \times (G\backslash \GL_2(\ZZ/N\ZZ) )\big)/\GL_2(\ZZ).
\]
Using that $\det(G)=(\ZZ/N\ZZ)^\times$ and setting $H:= G \cap\SL_2(\ZZ/N\ZZ)$, we find that the natural maps  
\begin{align*}
(\mathfrak{H} \times (G\backslash \GL_2(\ZZ/N\ZZ) ))/\SL_2(\ZZ) &\to (X^\pm \times (G\backslash \GL_2(\ZZ/N\ZZ) ))/\GL_2(\ZZ) \quad \text{and} \\
(\mathfrak{H} \times (H\backslash \SL_2(\ZZ/N\ZZ) ))/\SL_2(\ZZ) &\to (\mathfrak{H} \times (G\backslash \GL_2(\ZZ/N\ZZ) ))/\SL_2(\ZZ)
\end{align*}
are isomorphisms of Riemann surfaces.  It thus suffices to show that $\mathfrak{H}/\Gamma_G$ and $ (\mathfrak{H} \times (H\backslash \SL_2(\ZZ/N\ZZ) ))/\SL_2(\ZZ)$ are isomorphic.  Define the map
\[
\varphi\colon \mathfrak{H}/\Gamma_G \to (\mathfrak{H} \times (H\backslash \SL_2(\ZZ/N\ZZ) ))/\SL_2(\ZZ)
\]
that takes a class containing $z$ to the class represented by $(z, H\cdot I)$.   For $\gamma \in \SL_2(\ZZ)$, the pairs $(z, H\cdot I)$ and $(\gamma^t(z), H\cdot \gamma^{-1})$ lies in the same class of $ (\mathfrak{H} \times (H\backslash \SL_2(\ZZ/N\ZZ) ))/\SL_2(\ZZ)$; from this one readily deduced that $\varphi$ is well-defined and injective.  It is straightforward to check that $\varphi$ is an isomorphism of Riemann surfaces.
\end{proof}

\subsection{$\FF_p$-points} \label{SS:Fp points}

Fix a prime $p\nmid 6N$ and an algebraic closure $\FFbar_p$ of $\FF_p$.  The Galois group $\Gal(\FFbar_p/\FF_p)$ is topologically generated by the automorphism $\Frob_p \colon x\mapsto x^p$.     In this section, we will describe how to compute $|X_G(\FF_p)|$.    \\

For an imaginary quadratic order $\OO$ of discriminant $D$, the $j$-invariant of the complex elliptic curve $\CC/\OO$ is an algebraic integer; its minimal polynomial $P_D(x) \in \ZZ[x]$ is the \defi{Hilbert class polynomial} of $\OO$.   For an integer $D<0$ which is not the discriminant of a quadratic order, we set $P_D(x)=1$.

Fix an elliptic curve $E$ over $\FF_p$ with $j_E \notin \{0,1728\}$.    Let $a_E$ be the integer $p+1-|E(\FF_p)|$.  Set $\Delta_E:=a_E^2-4p$; we have $\Delta_E \neq 0$ by the Hasse inequality.    Let $b_E$ be the largest integer $b\geq 1$ such that $b^2|\Delta_E$ and $P_{\Delta_E/b^2}(j_E) = 0$; this is well-defined since we will always have $P_{\Delta_E}(j_E)=0$.  Define the matrix
\[
\Phi_E := \begin{pmatrix}
     (a_E  - \Delta_E/b_E)/2 &  \Delta_E/b_E \cdot (1-\Delta_E/b_E^2)/4   \\
      b_E &  (a_E + \Delta_E/b_E)/2
\end{pmatrix};
\]
it has integer entries since $\Delta_E/b_E^2$ is an integer congruent to $0$ or $1$ modulo $4$ (it is the discriminant of a quadratic order) and $\Delta_E\equiv a_E \pmod{2}$.  One can check that $\Phi_E$ has trace $a_E$ and determinant $p$.  In practice, $\Phi_E$ is straightforward to compute; there are many good algorithms to compute $a_E$ and $P_D(x)$.

The following proposition shows that $\Phi_E$ describes $\rho_{E,N}(\Frob_p)$, and hence also $\rho_{E,N}$, up to conjugacy.   

\begin{prop} \label{P:Duke-Toth}
With notation as above, the reduction of $\Phi_E$ modulo $N$ is conjugate in $\GL_2(\ZZ/N\ZZ)$ to $\rho_{E,N}(\Frob_p)$.
\end{prop}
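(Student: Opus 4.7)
The plan is to identify $E[N]$, as a module over $\OO := \End_{\FF_p}(E)$, with $\OO/N\OO$, and then to recognize $\Frob_p$ acting on $E[N]$ as multiplication by the Frobenius endomorphism $\pi_E \in \OO$; after fixing a convenient $\ZZ$-basis of $\OO$, the resulting matrix will be exactly $\Phi_E \bmod N$.

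First I would identify $\OO$ precisely. The endomorphism $\pi_E$ satisfies $\pi_E^2 - a_E \pi_E + p = 0$, so $K := \QQ(\pi_E) = \QQ(\sqrt{\Delta_E})$ and $\OO$ is an order in $K$ containing $\ZZ[\pi_E]$. By Deuring's theorem (via a CM lift of $E$ to characteristic zero, which exists for both ordinary and supersingular $E$), $j_E$ is a root of $P_D(x)$ if and only if $D = \operatorname{disc}(\OO)$. Since $\operatorname{disc}(\OO) \cdot [\OO : \ZZ[\pi_E]]^2 = \Delta_E$, the integer $b_E$ defined in the statement equals $[\OO : \ZZ[\pi_E]]$, and $\OO$ is the order of discriminant $D := \Delta_E/b_E^2$. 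Choosing the $\ZZ$-basis $\{1, \omega\}$ of $\OO$ with $\omega := (D + \sqrt{D})/2$ (so that $\omega^2 = D\omega + D(1-D)/4$) and expanding $\pi_E = (a_E - b_E D)/2 + b_E \omega$, a direct calculation yields that the matrix of multiplication by $\pi_E$ on $\OO$ in this basis is exactly $\Phi_E$.

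The main technical step is to establish that $E[N]$ is a free $\OO/N\OO$-module of rank $1$. Since $p \nmid N$, any $\alpha \in \OO$ annihilating $E[N]$ factors through $[N]$ and so lies in $N\OO$; hence $\OO/N\OO$ injects into $\End_{\ZZ/N\ZZ}(E[N])$, and both rings have order $N^2$. Working prime-by-prime, it suffices to show that the Tate module $T_\ell E$ is free of rank $1$ over $\OO \otimes \ZZ_\ell$ for each $\ell \mid N$. When $\gcd(b_E, N) = 1$ this follows from classical local structure theory (maximal orders are products of discrete valuation rings). In general, one invokes Deuring's lifting theorem to replace $E$ by a CM elliptic curve $\widetilde E$ over a number field with $\End(\widetilde E) = \OO$, writes $\widetilde E \cong \CC/\aA$ for a proper $\OO$-ideal $\aA$, and uses that $\aA/N\aA$ is locally free of rank $1$ over the semilocal ring $\OO/N\OO$ (hence free); the Tate modules $T_\ell E$ and $T_\ell \widetilde E$ agree for $\ell \neq p$.

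Fixing any $\OO/N\OO$-module isomorphism $E[N] \cong \OO/N\OO$ and transporting the basis $\{1, \omega\} \bmod N$ to $E[N]$, we find that $\Frob_p$ acts in this basis by $\Phi_E \bmod N$, so $\rho_{E,N}(\Frob_p)$ is conjugate to $\Phi_E \bmod N$ in $\GL_2(\ZZ/N\ZZ)$. The identification of $\OO$ through $b_E$ and the matrix computation are algebraic bookkeeping once one adopts the $\OO$-module viewpoint; the principal difficulty is the freeness step, especially when $b_E$ shares primes with $N$ and the local orders $\OO \otimes \ZZ_\ell$ are not maximal, where the Deuring lift together with the semilocal principal-ideal argument is the crucial input.
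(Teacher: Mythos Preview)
The paper's own proof is a two-line reduction to prime-power $N$ followed by a citation of Theorem~2 in Centeleghe's paper. Your sketch is essentially a reconstruction of the argument behind that reference: identify $\OO=\End_{\FF_p}(E)$ as the order of discriminant $\Delta_E/b_E^{2}$, show that $E[N]$ is free of rank one over $\OO/N\OO$ (equivalently that $T_\ell E$ is free over $\OO\otimes\ZZ_\ell$ for each $\ell\mid N$), and read off the matrix of multiplication by $\pi_E$ in the basis $\{1,\omega\}$. So you are not taking a different route; you are unpacking the black box the paper invokes. The matrix bookkeeping and the freeness argument via a Deuring lift to $\CC/\aA$ with $\aA$ an invertible $\OO$-ideal are correct.

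There is one misstatement and one point that is genuinely glossed over. The misstatement: ``$j_E$ is a root of $P_D(x)$ if and only if $D=\operatorname{disc}(\OO)$'' is false as written --- the paper itself remarks that $P_{\Delta_E}(j_E)=0$ always, and in fact $P_D(j_E)=0$ for every $D=\Delta_E/b^2$ with $\ZZ[\pi_E]\subseteq\OO_D\subseteq\OO$. What you actually need (and use) is only that $\operatorname{disc}(\OO)$ is the $D$ of smallest absolute value with this property, so that the \emph{largest} admissible $b$ is $[\OO:\ZZ[\pi_E]]$. The glossed-over point is the other inequality: you want ``$P_D(j_E)=0$ with $\OO_D\supsetneq\OO$ is impossible''. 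In the ordinary case this is immediate since $\End_{\FFbar_p}(E)=\OO$. In the supersingular case the reduction of a CM lift with endomorphism ring $\OO_D$ only embeds $\OO_D$ into the quaternionic $\End_{\FFbar_p}(E)$, and one must still argue that the image lands in the centralizer of the \emph{actual} Frobenius $\pi_E$ (not merely in some conjugate copy of $K$). This extra step is carried out in the cited reference but is absent from your sketch.
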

\begin{proof}
It suffices to prove the proposition when $N$ is a prime power.   For $N$ a prime power, it is then a consequence of Theorem~2 in \cite{Tommaso}.
\end{proof}

We now explain how to compute $|X_G(\FF_p)|$.  We can compute $|X_G^\infty(\FF_p)|$  using Lemma~\ref{L:cusp bijection} (with $k=\FF_p$, the subgroup $\chi_N(\Gal_{\FF_p})$ of $(\ZZ/N\ZZ)^\times$ is generated by $p$ modulo $N$).  So we need only describe how to compute  $|Y_G(\FF_p)|$;   it thus suffices to compute each term in the sum
\[
|Y_G(\FF_p)| = \sum_{j \in \FF_p} |\{P \in Y_G(\FF_p) : \pi_G(P)=j \}|.
\]
Take any $j \in \FF_p$ and fix an elliptic curve $E$ over $\FF_p$ with $j_E=j$.     \\

First suppose that $j\notin \{0,1728\}$.   We have $\Aut(E_{\FFbar_p})=\{\pm I\}$ and hence each automorphism acts on $E[N]$ by $I$ or $-I$.     Let $M$ be the group of isomorphisms $E[N] \xrightarrow{\sim} (\ZZ/N\ZZ)^2$.   Since $-I \in G$, we have $G \backslash M /\Aut(E_{\FFbar_p}) = G \backslash M$.    Lemma~\ref{L:fiber cardinality} implies that $ |\{P \in Y_G(\FF_p) : \pi_G(P)=j \}|$ is equal to cardinality of the subset of $G\backslash M$ fixed by the action of $\Frob_p$.    By Proposition~\ref{P:Duke-Toth} and choosing an appropriate basis of $E[N]$, we deduce that $|\{P \in Y_G(\FF_p) : \pi_G(P)=j \}|$ is equal to the cardinality of the subset of $G\backslash \GL_2(\ZZ/N\ZZ)$ fixed by right multiplication by $\Phi_E$.  In particular, note that we can compute $|\{P \in Y_G(\FF_p) : \pi_G(P)=j \}|$ without having to compute $E[N]$.\\

Now suppose that $j\in \{0,1728\}$ and recall that $p\nmid 6$.   When $j=0$, we take $E/\FF_p$ to be the curve defined by $y^2=x^3-1$; the group $\Aut(E_{\FFbar_p})$ is cyclic of order $6$ and generated by $(x,y)\mapsto (\zeta x, -y)$, where $\zeta \in \FFbar_p$ is a cube root of unity.   When $j =1728$, we take $E/\FF_p$ to be the curve defined by $y^2=x^3-x$; the group $\Aut(E_{\FFbar_p})$ is cyclic of order $6$ and generated by $(x,y)\mapsto (-x, \zeta y)$, where $\zeta\in \FFbar_p$ is a fourth root of unity. 

One can compute an explicit basis of $E[N]$.   With respect to this basis, the action of $\Aut(E_{\FFbar_p})$ on $E[N]$ corresponds to a subgroup $\calA$ of $\GL_2(\ZZ/N\ZZ)$ and the action of $\Frob_p$ on $E[N]$ corresponds to a matrix $\Phi_{E,N} \in \GL_2(\ZZ/N\ZZ)$.     Lemma~\ref{L:fiber cardinality} implies that $ |\{P \in Y_G(\FF_p) : \pi_G(P)=j \}|$ equals the number of elements in $G\backslash \GL_2(\ZZ/N\ZZ) / \calA$ that are fixed by right multiplication by $\Phi_{E,N}$.

\section{Preliminary work}  \label{P:theoretic work}

Take any congruence subgroup $\Gamma$ of $\SL_2(\ZZ)$ and denote its level by $N_0$.   Let $\pm \Gamma$ be the congruence subgroup generated by $\Gamma$ and $-I$.   Let $N$ be the integer $N_0$, $4N_0$ or $2N_0$ when $v_2(N_0)$ is $0$, $1$ or at least $2$, respectively.   

\begin{defn} \label{D:I(Gamma)}
We define $\scrI(\Gamma)$ to be the set of integers
\[
[\SL_2(\ZZ_N): G']\cdot {2}/{\gcd(2,N)},
\]
where $G$ varies over the open subgroups of $\GL_2(\ZZ_N)$ that are the inverse image by the reduction map $\GL_2(\ZZ_N) \to \GL_2(\ZZ/N\ZZ)$ of a subgroup $G(N)\subseteq \GL_2(\ZZ/N\ZZ)$ which satisfies the following conditions:
\begin{alphenum}
\item \label{I:a}
$G(N) \cap \SL_2(\ZZ/N\ZZ)$ is equal to $\pm \Gamma$ modulo $N$,
\item \label{I:b}
$G(N) \supseteq (\ZZ/N\ZZ)^\times\cdot I$,
\item \label{I:c}
$\det(G(N))=(\ZZ/N\ZZ)^\times$,
\item \label{I:d}
$G(N)$ contains a matrix that is conjugate to $\left(\begin{smallmatrix}1 & 0 \\0 & -1\end{smallmatrix}\right)$ or $\left(\begin{smallmatrix}1 & 1 \\0 & -1\end{smallmatrix}\right) $ in $\GL_2(\ZZ/N\ZZ)$,
\item \label{I:e}
the set $X_{G(N)}(\QQ)$ is infinite.
\end{alphenum}
\end{defn}

The set $\scrI(\Gamma)$ is finite since there are only finitely many possible $G(N)$ for a fixed $N$.   In the special case $N=1$, we view $\GL_2(\ZZ_N)$ and $\SL_2(\ZZ_N)$ as trivial groups and hence we find that $\scrI(\SL_2(\ZZ))=\{2\}$.   Define the set of integers 
\[
\scrI := \bigcup_{\Gamma} \scrI(\Gamma),
\]
where the union is over the congruence subgroups of $\SL_2(\ZZ)$ that have genus $0$ or $1$.  The set $\scrI$ is finite since there are only finitely many congruence subgroups of genus $0$ or $1$, see~\cite{MR2016709}.

 The goal of this section is to prove the following theorem.

\begin{thm} \label{T:main ineffective scrI}
Fix an integer $c$.    There is a finite set $J$, depending only on $c$, such that if $E/\QQ$ is an elliptic curve with $j_E \notin J$ and $\rho_{E,\ell}$ surjective for all primes $\ell >c$, then  $[\GL_2(\Zhat) : \rho_E(\Gal_\QQ)]$ is an element of $\scrI$.
\end{thm}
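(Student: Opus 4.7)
The argument proceeds in four steps; Faltings' theorem is invoked at the end and accounts for the ineffectivity of $J$.

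\textbf{Step 1: Reduction via the commutator.} Let $E/\QQ$ satisfy the hypotheses. If $E$ has complex multiplication the index is infinite, and the thirteen CM $j$-invariants can simply be placed into $J$; assume henceforth that $E$ is non-CM. Set $H:=\rho_E(\Gal_\QQ)$ and $H':=\rho_E(\Gal_\QQ)'$. By Proposition~\ref{P:EC equality} it suffices to identify $[\SL_2(\Zhat):H']$ as an element of $\scrI$.

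\textbf{Step 2: Bounding the level.} The open subgroup $\pm H'\subseteq\SL_2(\Zhat)$ is the closure of a congruence subgroup $\Gamma$ of $\SL_2(\ZZ)$; let $N_0$ denote its level. For each prime $\ell>c$ with $\ell\ge 5$, the hypothesis that $\rho_{E,\ell}$ is surjective combined with a standard lifting argument (the kernel of $\GL_2(\ZZ_\ell)\to\GL_2(\FF_\ell)$ is pro-$\ell$) forces the projection of $H$ to $\GL_2(\ZZ_\ell)$ to be all of $\GL_2(\ZZ_\ell)$; since $\SL_2(\ZZ_\ell)$ is perfect for $\ell\ge 5$, the projection of $H'$ to $\SL_2(\ZZ_\ell)$ is surjective too. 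A Goursat-type argument, using the simplicity of $\PSL_2(\FF_\ell)$ for $\ell\ge 5$ to rule out common quotients with the other factors, then shows that $H'$ contains $\SL_2(\ZZ_\ell)$ as a direct factor inside $\SL_2(\Zhat)$. Hence $N_0$ is supported on primes $\ell\le\max(c,3)$, and a slightly more delicate analogue of the same Goursat-style reasoning at the remaining small primes bounds the $\ell$-adic valuations of $N_0$. Thus both $N_0$ and the adjusted integer $N$ of Definition~\ref{D:I(Gamma)} are bounded by a constant depending only on $c$.

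\textbf{Step 3: Constructing $G$ and matching the index.} Let $G(N)\subseteq\GL_2(\ZZ/N\ZZ)$ be the subgroup generated by the image of $H$ modulo $N$ together with the scalar subgroup $(\ZZ/N\ZZ)^\times\cdot I$, and let $G\subseteq\GL_2(\ZZ_N)$ be its preimage by reduction. Since scalars are central, the commutator $G'$ coincides with the image of $H'$ in $\SL_2(\ZZ_N)$. One verifies conditions (a)--(d) of Definition~\ref{D:I(Gamma)} directly: (a) follows from the definition of $\Gamma$ and the fact that $N$ is a level for $\pm H'$; (b) is automatic; (c) uses that $\det\circ\rho_E$ is the cyclotomic character, which is surjective onto $(\ZZ/N\ZZ)^\times$; and (d) records the conjugacy type of the image of complex conjugation, as established in the proof of Proposition~\ref{P:R-points for YG}. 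Using that $\pm H'$ has level dividing $N$, one then computes
\[
[\SL_2(\Zhat):H']\;=\;[\SL_2(\Zhat):\pm H']\cdot[\pm H':H']\;=\;[\SL_2(\ZZ_N):G']\cdot\frac{2}{\gcd(2,N)},
\]
which is a candidate entry of $\scrI(\Gamma)$, provided that condition (e) also holds for this $G(N)$.

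\textbf{Step 4: Applying Faltings.} By Proposition~\ref{P:isomorphic Riemann surfaces} the genus of $X_G$ equals the genus of $\Gamma$. Two situations prevent the good conclusion:
\begin{romanenum}
\item $\Gamma$ has genus at least $2$, so that $X_G$ has genus at least $2$ and $X_G(\QQ)$ is finite by Faltings' theorem;
\item $\Gamma$ has genus at most $1$ but $X_{G(N)}(\QQ)$ is nonetheless finite, i.e., condition (e) fails.
\end{romanenum}
By Proposition~\ref{P:key}, if $j_E\notin\{0,1728\}$ then $j_E$ lies in $\pi_G(X_G(\QQ))$, so in either bad situation $j_E$ belongs to a finite set of exceptional values. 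Since $N$ is uniformly bounded in terms of $c$ by Step~2, there are only finitely many pairs $(\Gamma,G(N))$ to consider, and hence only finitely many exceptional $j$-invariants altogether. Let $J$ consist of these values together with $\{0,1728\}$ and the thirteen CM $j$-invariants. Then for any $E$ with $j_E\notin J$, the good situation obtains, and by Step~3 we conclude $[\GL_2(\Zhat):\rho_E(\Gal_\QQ)]=[\SL_2(\Zhat):H']\in\scrI(\Gamma)\subseteq\scrI$.

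The main obstacle is Step~2: extracting a uniform bound on $N_0$ from the surjectivity hypothesis requires careful use of the Goursat lemma and the structure of $\SL_2$ over $\ZZ_\ell$, particularly at the small primes $\ell\in\{2,3\}$. Step~4 is essentially bookkeeping once Faltings' theorem is invoked, and it is Faltings that makes $J$ inherently ineffective.
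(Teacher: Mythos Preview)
The gap is in Step~2. Goursat's lemma controls entanglement \emph{between} distinct primes; it says nothing about how deeply the image sits inside $\GL_2(\ZZ_\ell)$ at a fixed small prime $\ell\leq c$. Nothing in the hypothesis prevents, say, $\rho_{E,2^k}(\Gal_\QQ)$ from lying in a small subgroup for arbitrarily large $k$, so there is no ``Goursat-style reasoning'' that bounds $v_\ell(N_0)$. In fact, a uniform bound on $N_0$ over all $E$ satisfying the hypotheses is equivalent---via Proposition~\ref{P:EC equality} and the fact that open subgroups of $\SL_2(\Zhat)$ of bounded level have bounded index and conversely---to a uniform bound on $[\GL_2(\Zhat):\rho_E(\Gal_\QQ)]$. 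That is essentially the conclusion of the theorem (once the finitely many $j_E\in J$ are handled individually by Serre's open image theorem), not an available input. You correctly flag Step~2 as ``the main obstacle,'' but the obstacle is not a matter of care with Goursat at $\ell\in\{2,3\}$; the claimed bound simply does not follow from that kind of argument.

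The paper does not attempt to bound $N$ before constructing $J$. Instead it defines the exceptional set $\calJ$ as a union over \emph{all} pairs $(N,G)$ with $N$ supported on primes $\leq 13$ for which $X_G$ has genus $\geq 2$ or $X_G(\QQ)$ is finite, and then proves (Proposition~\ref{P:finiteness}) that this a~priori infinite union of finite sets is itself finite. The key step is a descending-chain argument: if infinitely many minimal such pairs existed, one could build a strictly decreasing chain $M_0\supsetneq M_1\supsetneq\cdots$ of open subgroups of $\GL_2(\ZZ_m)$, each $M_i$ yielding a congruence subgroup $\Gamma_i$ of genus $\leq 1$ with $[\SL_2(\ZZ):\Gamma_i]\to\infty$, contradicting the Cummins--Pauli finiteness of low-genus congruence subgroups. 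This chain argument---not Goursat, and not Faltings alone---is what ultimately controls the small primes, and it is absent from your sketch.

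Step~3 also needs repair. You take $\Gamma$ to correspond to $\pm H'$, but with your $G(N)$ one has $G(N)\cap\SL_2(\ZZ/N\ZZ)\supseteq\pm H'\bmod N$, typically strictly (elements $\lambda h$ with $\lambda^2\det(h)=1$ and $\lambda\neq\pm1$ give extra classes), so condition~(\ref{I:a}) fails as written. The paper avoids this by defining $\Gamma_E$ directly from $G(N)\cap\SL_2(\ZZ/N\ZZ)$ and proving the index identity $[\SL_2(\Zhat):G']=[\SL_2(\ZZ_N):G_N']\cdot 2/\gcd(2,N)$ separately in Lemma~\ref{L:technical group section}.
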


In \S\ref{S:index}, we will compute $\scrI$ and show that it is equal to the set $\calI$ from \S\ref{S:intro}; this will prove Theorem~\ref{T:main ineffective}.

\subsection{The congruence subgroup $\Gamma_E$} \label{SS:indices scrI}

Fix a non-CM elliptic curve $E$ over $\QQ$.    Define the subgroup 
\[
G:=\Zhat^\times\cdot \rho_E(\Gal_\QQ)
\]
of $\GL_2(\Zhat)$.  For each positive integer $n$, let $G_n$ be the image of $G$ under the projection map $\GL_2(\Zhat)\to \GL_2(\ZZ_n)$.

By Serre's theorem, $G$ is an open subgroup of $\GL_2(\Zhat)$.      We have an equality $G'=\rho_E(\Gal_\QQ)'$ of commutator subgroups and hence
\begin{equation} \label{E:index to G}
[\GL_2(\Zhat):\rho_E(\Gal_\QQ)] = [\SL_2(\Zhat): G']
\end{equation}
by Proposition~\ref{P:EC equality}.  There is no harm in working with the larger group $G$ since we are only concerned about the index $[\GL_2(\Zhat):\rho_E(\Gal_\QQ)]$.    \\

 Let $m$ be the product of the primes $\ell$ for which $\ell\leq 5$ or for which $\rho_{E,\ell}$ is not surjective.  The group $G_{m}  \cap \SL_2(\ZZ_{m})$ is open in $\SL_2(\ZZ_{m})$.    Let $N_0\geq 1$ be the smallest positive integer dividing some power of $m$ for which
 \begin{align} \label{E:N definition}
 G_m \cap \SL_2(\ZZ_m) \supseteq \{A\in \SL_2(\ZZ_m): A\equiv I \pmod{N_0}\}.
 \end{align}
Let $N$ be the integer $N_0$, $4N_0$ or $2N_0$ when $v_2(N_0)$ is $0$, $1$ or at least $2$, respectively.   \\

Define $\Gamma_E := \Gamma_{G(N)}$; it is the congruence subgroup consisting of matrices in $\SL_2(\ZZ)$ whose image modulo $N$ lies in $G(N)$.  Note that the congruence subgroup $\Gamma_E$ has level $N_0$ and contains $-I$.  

\begin{prop} \label{P:scrI relevance}
The subgroup $G(N)$ of $\GL_2(\ZZ/N\ZZ)$ satisfies conditions (\ref{I:a}), (\ref{I:b}), (\ref{I:c}) and (\ref{I:d}) of Definition~\ref{D:I(Gamma)} with $\Gamma=\Gamma_E$.
\end{prop}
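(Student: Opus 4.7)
The plan is to verify conditions (a)--(d) of Definition~\ref{D:I(Gamma)} one by one, using directly the definition $G = \Zhat^\times \cdot \rho_E(\Gal_\QQ)$ together with Proposition~\ref{P:key} and Proposition~\ref{P:R-points for YG}. First I would record two basic features of $G(N)$. Since $G \supseteq \Zhat^\times \cdot I$ by construction, reduction modulo $N$ gives $G(N) \supseteq (\ZZ/N\ZZ)^\times \cdot I$, which is precisely condition (b); in particular $-I \in G(N)$. Moreover, since $\det \circ \rho_E$ is the cyclotomic character, which is surjective onto $\Zhat^\times$, we have $\det(G) = \Zhat^\times$, and reducing mod $N$ yields $\det(G(N)) = (\ZZ/N\ZZ)^\times$, which is condition (c).

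Condition (a) is then essentially bookkeeping. By definition $\Gamma_E = \Gamma_{G(N)}$ is the preimage of $G(N) \cap \SL_2(\ZZ/N\ZZ)$ under the surjective reduction map $\SL_2(\ZZ) \to \SL_2(\ZZ/N\ZZ)$, so the image of $\Gamma_E$ modulo $N$ equals $G(N) \cap \SL_2(\ZZ/N\ZZ)$. From (b) we already know $-I \in G(N) \cap \SL_2(\ZZ/N\ZZ)$, so $\Gamma_E$ contains $-I$, hence $\pm \Gamma_E = \Gamma_E$, and its image mod $N$ is exactly $G(N) \cap \SL_2(\ZZ/N\ZZ)$, giving (a).

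The slightly more delicate step is condition (d), for which I would argue via the existence of a real point on $Y_{G(N)}$. Because $E/\QQ$ is non-CM, its $j$-invariant lies outside $\{0, 1728\}$. Since $\rho_E(\Gal_\QQ) \subseteq G$, reducing mod $N$ gives $\rho_{E,N}(\Gal_\QQ) \subseteq G(N)$, and since $-I \in G(N)$ we may apply Proposition~\ref{P:key} to conclude that $j_E \in \pi_{G(N)}(Y_{G(N)}(\QQ))$. In particular $Y_{G(N)}(\QQ)$ is nonempty, so $Y_{G(N)}(\RR)$ is nonempty, and Proposition~\ref{P:R-points for YG} then forces $G(N)$ to contain a matrix conjugate to $\bigl(\begin{smallmatrix}1 & 0\\ 0 & -1\end{smallmatrix}\bigr)$ or $\bigl(\begin{smallmatrix}1 & 1\\ 0 & -1\end{smallmatrix}\bigr)$, establishing (d).

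None of the individual steps is hard; the main obstacle is simply keeping straight that $G(N)$ rather than $\rho_{E,N}(\Gal_\QQ)$ is the object being tested, so that condition (b) (which uses crucially that we enlarged to $\Zhat^\times \cdot \rho_E(\Gal_\QQ)$) is available when applying Proposition~\ref{P:key} (which requires $-I$ in the group). Everything else is a direct transcription from the definitions, plus the two key propositions of \S\ref{S:modular curves}.
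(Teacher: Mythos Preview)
Your proof is correct and follows essentially the same approach as the paper's own proof: verify (b) and (c) directly from $G=\Zhat^\times\cdot\rho_E(\Gal_\QQ)$ and the surjectivity of the cyclotomic character, read off (a) from the definition of $\Gamma_E=\Gamma_{G(N)}$, and obtain (d) by producing a rational (hence real) point on $Y_{G(N)}$ via Proposition~\ref{P:key} and then invoking Proposition~\ref{P:R-points for YG}. You are slightly more explicit than the paper in checking the hypotheses of Proposition~\ref{P:key} (namely $-I\in G(N)$ and $j_E\notin\{0,1728\}$, the latter following since curves with those $j$-invariants are CM), but the argument is otherwise identical.
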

\begin{proof}
Our congruence subgroup $\Gamma_E$ contains $-I$ and was chosen so that $\Gamma_E$ modulo $N$ equals $G(N)\cap \SL_2(\ZZ/N\ZZ)$.  We have $G \supseteq \Zhat^\times \cdot I$, so $G(N) \supseteq (\ZZ/N\ZZ)^\times \cdot I$.    We have $\det(\rho_E(\Gal_\QQ))=\Zhat^\times$, so $\det(G(N))=(\ZZ/N\ZZ)^\times$.  

It remains to show that condition (\ref{I:d}) holds.    Since $E/\QQ$ is non-CM and $\rho_{E,N}(\Gal_\QQ)$ is a subgroup of $G(N)$, we have $Y_{G(N)}(\QQ)\neq \emptyset$ by Proposition~\ref{P:key}.    In particular, $Y_{G(N)}(\RR) \neq \emptyset$.   Proposition~\ref{P:R-points for YG} implies that $G$ contains an element that is conjugate in $\GL_2(\ZZ/N\ZZ)$ to $\left(\begin{smallmatrix}1 & 0 \\0 & -1\end{smallmatrix}\right)$ or $\left(\begin{smallmatrix}1 & 1 \\0 & -1\end{smallmatrix}\right)$.  
\end{proof}

The following lemma shows that $G_N$ is determined by $G(N)$.

 \begin{lemma} \label{L:image mod tildeN}
 The group $G_N$ is the inverse image of $G(N)$ under the reduction modulo $N$ map $\GL_2(\ZZ_N) \to \GL_2(\ZZ/N\ZZ)$.
 \end{lemma}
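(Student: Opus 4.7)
The plan is to establish the nontrivial inclusion $\pi^{-1}(G(N))\subseteq G_N$, where $\pi\colon\GL_2(\ZZ_N)\to\GL_2(\ZZ/N\ZZ)$ is reduction modulo $N$; the reverse inclusion is immediate from the definition of $G(N)$. Since $\pi(G_N)=G(N)$, this reduces to showing that $\ker\pi=\{A\in\GL_2(\ZZ_N):A\equiv I\pmod{N}\}$ is contained in $G_N$.

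Given such an $A$, I would first lift it to $A'\in\GL_2(\ZZ_m)$ by taking the components of $A$ on primes dividing $N$ and setting $A'_\ell=I$ on the primes of $m$ that do not divide $N$; this is legitimate because every prime dividing $N$ also divides $m$. By construction $A'\equiv I\pmod{N}$ in $\ZZ_m$, and in particular $A'\equiv I\pmod{N_0}$. The strategy is then to write $A'=(uI)\cdot(u^{-1}A')$ for a scalar $u\in\ZZ_m^\times$ satisfying $u^2=\det A'$ and $u\equiv 1\pmod{N_0}$. Granting such a $u$, the element $uI$ lies in $G_m$ because $G\supseteq\Zhat^\times\cdot I$, while $u^{-1}A'$ lies in $\{B\in\SL_2(\ZZ_m):B\equiv I\pmod{N_0}\}\subseteq G_m\cap\SL_2(\ZZ_m)$ by the defining property \eqref{E:N definition} of $N_0$. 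Hence $A'\in G_m$, and projecting to $\GL_2(\ZZ_N)$ gives $A\in G_N$.

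The main obstacle, and the reason for the piecewise definition of $N$, is producing such a square root $u$ of $\det A'$. I would analyze this one prime at a time, using that $\det A'\equiv 1\pmod{N}$ in $\ZZ_m$. For odd $\ell\mid N$ (where $v_\ell(N)=v_\ell(N_0)$), the $\ell$-adic logarithm identifies $1+\ell\ZZ_\ell$ with $\ell\ZZ_\ell$ and squaring becomes multiplication by the unit $2$; hence $\det A'_\ell\in 1+\ell^{v_\ell(N_0)}\ZZ_\ell$ has a square root in the same subgroup. For $\ell=2$, the squaring map $1+2^k\ZZ_2\to 1+2^{k+1}\ZZ_2$ is an isomorphism for $k\geq 2$, so $\det A'_2\in 1+2^{v_2(N)}\ZZ_2$ admits a square root in $1+2^{v_2(N)-1}\ZZ_2$; checking the three cases $v_2(N_0)\in\{0,1,\geq 2\}$ shows that the prescribed $N$ always satisfies $v_2(N)-1\geq v_2(N_0)$, so this root reduces to $1$ modulo $2^{v_2(N_0)}$ as required. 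For primes $\ell\mid m$ not dividing $N$ we have $\det A'_\ell=1$ and take $u_\ell=1$. Assembling these local roots produces the desired $u\in\ZZ_m^\times$ and completes the argument.
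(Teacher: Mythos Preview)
Your proof is correct and follows essentially the same strategy as the paper: reduce to showing that the kernel of reduction modulo $N$ lies in $G_N$, then factor $A = (\lambda I)\cdot(\lambda^{-1}A)$ with $\lambda$ a square root of $\det A$ satisfying $\lambda\equiv 1\pmod{N_0}$, so that $\lambda I\in G$ by the scalar containment and $\lambda^{-1}A\in G$ by \eqref{E:N definition}. The only differences are cosmetic: the paper works directly in $\ZZ_N$ and packages the square-root step as the single identity $(1+N_0\ZZ_N)^2 = 1+N\ZZ_N$, whereas you lift to $\ZZ_m$ and verify this identity prime by prime; the detour through $\ZZ_m$ is unnecessary since \eqref{E:N definition} projects immediately to the analogous containment in $\SL_2(\ZZ_N)$.
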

\begin{proof}
Take any $A\in \GL_2(\ZZ_N)$ satisfying $A\equiv I \pmod{N}$; we need only verify that $A$ is an element of $G_N$.   Our integer $N$ has the property that $(1+N_0\ZZ_N)^2 = 1 + N \ZZ_N$.  Since $\det(A)\equiv 1 \pmod{N}$, we have $\det(A)=\lambda^2$ for some $\lambda\in 1+ N_0\ZZ_N$.  Define $B:=\lambda^{-1}A$; it is an element of $\SL_2(\ZZ_N)$ that is congruent to $I$ modulo $N_0$.     Using (\ref{E:N definition}), we deduce that $B$ is an element of $G_N$.   From the definition of $G$, it is clear that $G_N$ contains the scalar matrix $\lambda I$.    Therefore, $A= \lambda I \cdot B$ is an element of $G_N$.
\end{proof} 

The following group theoretical lemma will be proved in \S\ref{SS:proof technical group}. 

\begin{lemma}   \label{L:technical group section}
We have 
\[
[\SL_2(\ZZ_N): G'] = [\SL_2(\ZZ_m): G_m']  = [\SL_2(\ZZ_N): G_N'] \cdot 2/\gcd(2,N).
\]
Moreover, $G' = G_m' \times \prod_{\ell \nmid m} \SL_2(\ZZ_\ell)$.
\end{lemma}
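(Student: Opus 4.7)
The plan is to exploit that $G$ is as large as possible at primes outside $m$ and at primes of $m$ not dividing $N_0$, so that $G'$ decomposes as a direct product; the factor $2/\gcd(2,N)$ will then come from the failure of $\SL_2(\ZZ_2)$ to be perfect.

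For each prime $\ell \nmid m$ we have $\ell \geq 7$ and $\rho_{E,\ell}$ surjective, and a standard lifting result (surjectivity modulo $\ell$ for $\ell \geq 5$ forces $\rho_{E,\ell^\infty}$ onto $\GL_2(\ZZ_\ell)$, using perfectness of the pro-$\ell$ kernel) gives $\rho_{E,\ell^\infty}(\Gal_\QQ)=\GL_2(\ZZ_\ell)$. A Goursat-type argument combining these primes, which relies on the simplicity of $\PSL_2(\FF_\ell)$ for $\ell \geq 5$ together with the consistency of determinants via the cyclotomic character, yields $\rho_E(\Gal_\QQ) \supseteq \prod_{\ell \nmid m}\SL_2(\ZZ_\ell)$; hence $G \supseteq \prod_{\ell \nmid m}\SL_2(\ZZ_\ell)$, and since each factor is topologically perfect ($\ell \geq 7 > 5$), the same inclusion holds for $G'$. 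In the decomposition $\SL_2(\Zhat) = \SL_2(\ZZ_m)\times \prod_{\ell\nmid m}\SL_2(\ZZ_\ell)$, the group $G'$ projects onto $G_m'$ in the first factor and contains the entire second factor, giving
\[
G' \;=\; G_m' \;\times\; \prod_{\ell \nmid m}\SL_2(\ZZ_\ell).
\]
This is the ``moreover'' clause, and taking indices gives $[\SL_2(\ZZ_N):G'] = [\SL_2(\Zhat):G'] = [\SL_2(\ZZ_m):G_m']$.

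For the remaining equality, let $m_1$ be the product of primes dividing $m$ but not $N_0$; since $N$ and $N_0$ have identical prime divisors, $\ZZ_N = \ZZ_{N_0}$ and $\ZZ_m = \ZZ_N \times \ZZ_{m_1}$. The minimality of $N_0$ in (\ref{E:N definition}) forces $G_m \cap \SL_2(\ZZ_m) \supseteq \{I\}\times \prod_{\ell\mid m_1}\SL_2(\ZZ_\ell)$, and combined with $\Zhat^\times\cdot I \subseteq G$ and $\det(G)=\Zhat^\times$ this gives $G_\ell = \GL_2(\ZZ_\ell)$ for every $\ell\mid m_1$, as well as surjectivity of the projection of $G_m$ onto $\prod_{\ell\mid m_1}\GL_2(\ZZ_\ell)$. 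Repeating the Goursat argument inside $\ZZ_m$ then yields $G_m' = G_N' \times \prod_{\ell\mid m_1}\GL_2(\ZZ_\ell)'$, so
\[
[\SL_2(\ZZ_m):G_m'] \;=\; [\SL_2(\ZZ_N):G_N'] \cdot \prod_{\ell\mid m_1}[\SL_2(\ZZ_\ell):\GL_2(\ZZ_\ell)'].
\]

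The main obstacle is to evaluate each local factor $[\SL_2(\ZZ_\ell):\GL_2(\ZZ_\ell)']$ for $\ell \mid m_1$. For $\ell \geq 5$, perfectness gives $\GL_2(\ZZ_\ell)' = \SL_2(\ZZ_\ell)$, contributing $1$. For $\ell = 3$, a direct computation shows $\GL_2(\FF_3)' = \SL_2(\FF_3)$ (for instance $[\left(\begin{smallmatrix}-1&0\\0&1\end{smallmatrix}\right),\left(\begin{smallmatrix}1&1\\0&1\end{smallmatrix}\right)]$ is a nontrivial unipotent, and analogous lower-triangular commutators generate all of $\SL_2(\FF_3)$); combined with perfectness of the pro-$3$ kernel $\ker(\SL_2(\ZZ_3)\to \SL_2(\FF_3))$, this gives $\GL_2(\ZZ_3)' = \SL_2(\ZZ_3)$, again contributing $1$. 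For $\ell = 2$, where $\SL_2(\ZZ_2)$ fails to be perfect, a computation modulo $4$ (equivalently, using the known abelianization $\SL_2(\ZZ_2)^\ab \cong \ZZ/2$, on which conjugation by $\ZZ_2^\times$ acts trivially) shows $\GL_2(\ZZ_2)' = \SL_2(\ZZ_2)'$ and $[\SL_2(\ZZ_2):\GL_2(\ZZ_2)'] = 2$. Since $2 \mid m_1$ precisely when $v_2(N_0) = 0$, i.e., when $N$ is odd, the product of local factors is exactly $2/\gcd(2,N)$, completing the proof.
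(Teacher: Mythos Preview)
Your overall architecture matches the paper's, and the treatment of primes $\ell\nmid m$ is essentially the same (the paper applies Goursat to $G'$ directly rather than to $\rho_E(\Gal_\QQ)$, but either route works since $\SL_2(\ZZ_\ell)$ is perfect for $\ell\ge 7$). The problem is the step you label ``repeating the Goursat argument inside $\ZZ_m$'' to obtain $G_m'=G_N'\times\prod_{\ell\mid m_1}\GL_2(\ZZ_\ell)'$.

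The earlier argument worked because $\SL_2(\ZZ_\ell)$ is perfect for $\ell\nmid m$: from $G\supseteq\{I\}\times\prod_{\ell\nmid m}\SL_2(\ZZ_\ell)$ you immediately get $G'\supseteq\{I\}\times\prod_{\ell\nmid m}\SL_2(\ZZ_\ell)$, and then the product decomposition of $G'$ follows. For $\ell\mid m_1\subseteq\{2,3,5\}$ this fails. From $G_m\supseteq\{I\}\times\SL_2(\ZZ_{m_1})$ (and even after throwing in the scalars, since they commute with everything) you only obtain $G_m'\supseteq\{I\}\times\SL_2(\ZZ_{m_1})'$, which for $\ell\in\{2,3\}$ is \emph{strictly smaller} than $\{I\}\times\GL_2(\ZZ_{m_1})'$. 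Nor does the ``no common simple quotient'' form of Goursat apply here: $\GL_2(\ZZ_3)'=\SL_2(\ZZ_3)$ has $\ZZ/3$ as a quotient, $\GL_2(\ZZ_2)'$ has $\ZZ/2$ as a quotient, and there is no reason an arbitrary $G_N'$ avoids these cyclic quotients. The paper closes this gap by a different device: for $B_1,B_2\in\GL_2(\ZZ_{m_1})$ with $\det B_1=\det B_2$, one finds a common $A\in G_N$ with $(A,B_1),(A,B_2)\in G_m$ (using that $G_m$ is the preimage of a subgroup of $G_N/W\times\GL_2(\ZZ_{m_1})/\SL_2(\ZZ_{m_1})$), so $(I,B_1B_2B_1^{-1}B_2^{-1})\in G_m'$; a separate lemma then shows that $\GL_2(\ZZ_d)'$ is topologically generated by commutators of pairs with equal determinant.

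There are also two errors in your $\ell=2$ discussion. The abelianization $\SL_2(\ZZ_2)^{\ab}$ is $\ZZ/4$, not $\ZZ/2$ (reduction mod $4$ gives the isomorphism $\SL_2(\ZZ_2)/\SL_2(\ZZ_2)'\cong\SL_2(\ZZ/4\ZZ)/\SL_2(\ZZ/4\ZZ)'\cong\ZZ/4$), and conjugation by $\ZZ_2^\times$ does \emph{not} act trivially on it: conjugation by $\operatorname{diag}(d,1)$ sends $\left(\begin{smallmatrix}1&1\\0&1\end{smallmatrix}\right)$ to its $d$-th power, hence acts as multiplication by $d\bmod 4$ on $\ZZ/4$. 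Consequently $\GL_2(\ZZ_2)'\supsetneq\SL_2(\ZZ_2)'$; the correct indices are $[\SL_2(\ZZ_2):\SL_2(\ZZ_2)']=4$ and $[\SL_2(\ZZ_2):\GL_2(\ZZ_2)']=2$. Your final value $2$ is right, but by accident.
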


The following lemma motivates our definition of $\scrI$.

\begin{lemma} \label{L:contained in scrI}
If $X_{G(N)}(\QQ)$ is infinite, then $[\GL_2(\Zhat): \rho_E(\Gal_\QQ)]$ is an element of $\scrI$.
\end{lemma}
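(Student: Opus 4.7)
The plan is to show that $[\GL_2(\Zhat):\rho_E(\Gal_\QQ)]$ lies in $\scrI(\Gamma_E)$, and then to show that $\Gamma_E$ has genus $0$ or $1$, so that $\scrI(\Gamma_E)\subseteq \scrI$.

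First I would verify that $G(N)$ satisfies conditions (\ref{I:a})--(\ref{I:e}) of Definition~\ref{D:I(Gamma)} with $\Gamma=\Gamma_E$. Conditions (\ref{I:a})--(\ref{I:d}) are exactly the content of Proposition~\ref{P:scrI relevance}, and condition (\ref{I:e}) is the hypothesis of the lemma (the set $X_{G(N)}(\QQ)$ being infinite certainly implies it is infinite, which is all condition (\ref{I:e}) asks). By Lemma~\ref{L:image mod tildeN}, the group $G_N\subseteq \GL_2(\ZZ_N)$ is precisely the inverse image of $G(N)$ under reduction modulo $N$, so $G_N$ is a legitimate choice of the group ``$G$'' appearing in Definition~\ref{D:I(Gamma)}. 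Thus the integer
\[
[\SL_2(\ZZ_N): G_N']\cdot 2/\gcd(2,N)
\]
belongs to $\scrI(\Gamma_E)$.

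Next I would identify this integer with the desired index. By Lemma~\ref{L:technical group section}, we have
\[
[\SL_2(\ZZ_N): G_N']\cdot 2/\gcd(2,N) = [\SL_2(\ZZ_N): G'] = [\SL_2(\ZZ_m): G_m'],
\]
and the same lemma gives the product decomposition $G' = G_m' \times \prod_{\ell\nmid m} \SL_2(\ZZ_\ell)$, from which $[\SL_2(\Zhat):G'] = [\SL_2(\ZZ_m):G_m']$. Combining this with (\ref{E:index to G}), which says $[\GL_2(\Zhat):\rho_E(\Gal_\QQ)] = [\SL_2(\Zhat):G']$, shows that $[\GL_2(\Zhat):\rho_E(\Gal_\QQ)]$ equals the integer above and therefore lies in $\scrI(\Gamma_E)$.

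It remains to check that $\Gamma_E$ has genus $0$ or $1$, so that $\scrI(\Gamma_E)\subseteq \scrI$. Since $X_{G(N)}$ is a smooth, projective, geometrically irreducible curve over $\QQ$ and $X_{G(N)}(\QQ)$ is infinite by hypothesis, Faltings' theorem forces its genus to be $0$ or $1$. By Proposition~\ref{P:isomorphic Riemann surfaces}, the genus of $X_{G(N)}$ agrees with the genus of $\Gamma_{G(N)} = \Gamma_E$, so $\Gamma_E$ has genus $0$ or $1$ and hence $\scrI(\Gamma_E)\subseteq \scrI$. Since all the pieces (Proposition~\ref{P:scrI relevance}, Lemmas~\ref{L:image mod tildeN} and \ref{L:technical group section}, Proposition~\ref{P:isomorphic Riemann surfaces}, Faltings' theorem) are already in place, there is no real obstacle; the only nontrivial input is the appeal to Faltings, which is what ultimately restricts attention to genus $0$ and $1$ congruence subgroups in the definition of $\scrI$.
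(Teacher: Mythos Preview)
Your proof is correct and follows essentially the same route as the paper: verify conditions (\ref{I:a})--(\ref{I:e}) for $G(N)$ via Proposition~\ref{P:scrI relevance} and the hypothesis, use Lemma~\ref{L:image mod tildeN} to recognize $G_N$ as the group in Definition~\ref{D:I(Gamma)}, compute the index via Lemma~\ref{L:technical group section} and (\ref{E:index to G}), and then apply Faltings together with Proposition~\ref{P:isomorphic Riemann surfaces} to get genus $\leq 1$. The only difference is the order of presentation; you are in fact slightly more explicit about how the product decomposition in Lemma~\ref{L:technical group section} yields $[\SL_2(\Zhat):G']=[\SL_2(\ZZ_m):G_m']$.
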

\begin{proof}
By Lemma~\ref{L:technical group section} and (\ref{E:index to G}), we have $[\GL_2(\Zhat):\rho_E(\Gal_\QQ)]=[\SL_2(\ZZ_N): G_N'] \cdot 2/\gcd(2,N)$.

The group $G(N)$ satisfies conditions (\ref{I:a}), (\ref{I:b}), (\ref{I:c}) and (\ref{I:d}) of Definition~\ref{D:I(Gamma)} with $\Gamma=\Gamma_E$ by Lemma~\ref{L:image mod tildeN}.    The group $G(N)$ satisfies (\ref{I:e}) by assumption.     Using Lemma~\ref{L:image mod tildeN}, we deduce that $[\SL_2(\ZZ_N): G_N'] \cdot 2/\gcd(2,N)$ is an element of $\scrI(\Gamma_E)$.

To complete the proof of the lemma, we need to show that $\Gamma_E$ has genus $0$ or $1$ since then $\scrI(\Gamma_E)\subseteq \scrI$.   The genus of $\Gamma_E$ is equal to the genus of $X_{G(N)}$ by Proposition~\ref{P:isomorphic Riemann surfaces}.    Since $X_{G(N)}$ has infinitely many rational point, it must have genus $0$ or $1$ by Faltings' theorem.
\end{proof}

\subsection{Exceptional rational points on modular curves} \label{SS:exceptional points}

Let $\calS$ be the set of pairs $(N,G)$ with $N\geq 1$ an integer not divisible by any prime $\ell >13$ and with $G$ a subgroup of $\GL_2(\ZZ/N\ZZ)$ satisfying the following conditions:
\begin{itemize}
\item
$\det(G)=(\ZZ/N\ZZ)^\times$ and $-I \in G$,
\item
$X_G$ has genus at least $2$ or $X_G(\QQ)$ is finite.
\end{itemize}
Define the set
 \[
 \calJ :=  \bigcup_{(N,G) \in \calS} \pi_G(Y_G(\QQ)).
 \]
We will prove that $\calJ$ is finite.  We will need the following lemma.

\begin{lemma} \label{L:Frattini}
Fix an integer $m\geq 2$.  An open subgroup $H$ of $\GL_2(\ZZ_m)$ has only a finite number of closed maximal subgroups and they are all open.
\end{lemma}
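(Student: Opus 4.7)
The plan is to reduce the statement to a finite quotient of $H$ by finding an open normal subgroup $K \subseteq H$ whose Frattini subgroup is well-behaved, and then showing that every closed maximal subgroup of $H$ automatically contains this Frattini subgroup. Specifically, since $H$ is open in $\GL_2(\ZZ_m) = \prod_{\ell\mid m}\GL_2(\ZZ_\ell)$, I would choose positive integers $n_\ell$ so that $K := \prod_{\ell\mid m} K_\ell$, where $K_\ell$ denotes the kernel of the reduction $\GL_2(\ZZ_\ell) \to \GL_2(\ZZ/\ell^{n_\ell}\ZZ)$, is contained in $H$. This $K$ is open and normal in $\GL_2(\ZZ_m)$, hence in $H$, and each $K_\ell$ is a topologically finitely generated pro-$\ell$ group.

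Because each $K_\ell$ is pro-$\ell$ and topologically finitely generated, its Frattini subgroup $\Phi(K_\ell)$ is open in $K_\ell$ with finite elementary abelian $\ell$-quotient. Setting $\Phi(K) := \prod_\ell \Phi(K_\ell)$, this subgroup is characteristic in $K$ and hence normal in $H$, is open in $H$, and has finite quotient $H/\Phi(K)$. If one can show that every closed maximal subgroup $M$ of $H$ contains $\Phi(K)$, then such $M$ correspond bijectively to the (finitely many) maximal subgroups of the finite group $H/\Phi(K)$, which simultaneously yields the finite-number conclusion and the openness conclusion.

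The main obstacle is establishing this containment, and I would argue by contradiction. If $\Phi(K) \not\subseteq M$, then maximality of $M$ forces $M\cdot \Phi(K) = H$; intersecting with the normal subgroup $K$ and using $\Phi(K)\subseteq K$ gives $(M\cap K)\cdot \Phi(K) = K$. Setting $L := M\cap K$, it then suffices to deduce $L = K$, since this forces $\Phi(K) \subseteq K \subseteq M$, contradicting our assumption. For this deduction I would run the Frattini non-generator argument factor-by-factor: from $L\cdot \Phi(K) = K$ the image of $L$ in each $K_\ell/\Phi(K_\ell)$ is surjective, and hence by the classical Frattini property of the topologically finitely generated pro-$\ell$ group $K_\ell$, the projection of $L$ into $K_\ell$ is all of $K_\ell$; finally, because the groups $K_\ell$ for distinct $\ell \mid m$ have pairwise coprime pro-orders, Goursat's lemma forces the closed subgroup $L$ of $\prod_\ell K_\ell$ to equal the full product $K$. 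The step requiring the most care is precisely this last one: ensuring that the Frattini lift works for an arbitrary closed (not merely open) subgroup of $K$, which is exactly where the pro-$\ell$ structure of each $K_\ell$, together with the coprimality of the different components, is indispensable.
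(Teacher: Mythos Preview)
Your argument is correct and is essentially a self-contained unpacking of the paper's one-line proof. The paper simply cites the proposition in Serre's \emph{Lectures on the Mordell--Weil theorem}, \S10.6, after observing that $H$ contains an open normal subgroup $I+m^e M_2(\ZZ_m)$ which is a product of pro-$\ell$ groups for $\ell\mid m$; Serre's criterion then yields that the Frattini subgroup of $H$ is open, from which the lemma is immediate. Your proof takes exactly the same open normal product $K=\prod_\ell K_\ell$, but instead of invoking Serre you explicitly exhibit the open subgroup $\prod_\ell \Phi(K_\ell)$ and verify directly---via the Frattini non-generator property in each pro-$\ell$ factor together with a Goursat argument across the factors---that every closed maximal subgroup of $H$ contains it. The underlying mechanism is identical; your version trades the external reference for a few extra lines of argument, and in doing so you effectively reprove the relevant special case of Serre's proposition.
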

\begin{proof}
The lemma follows from the proposition in \cite{MR1757192}*{\S10.6} which gives a condition for the \emph{Frattini subgroup} of $H$ to be open;  note that $H$ contains a normal subgroup of the form $I+m^e M_2(\ZZ_m)$ for some $e\geq 1$ and that $I+m^e M_2(\ZZ_m)$ is the product of pro-$\ell$ groups with $\ell |m$. 
\end{proof}

\begin{prop} \label{P:finiteness}
The set $\calJ$ is finite.
\end{prop}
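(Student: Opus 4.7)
My plan is to reduce the union defining $\calJ$ to a finite union of finite sets, using that each individual modular curve in $\calS$ contributes only finitely many rational $j$-invariants.

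First I observe that for each fixed $(N,G) \in \calS$, the set $\pi_G(Y_G(\QQ))$ is finite: if $X_G$ has genus at least $2$ this is Faltings' theorem, and otherwise it is built into the definition of $\calS$. So the issue is to bound the number of pairs $(N,G) \in \calS$ contributing new $j$-invariants. Setting $m = 2\cdot 3\cdot 5\cdot 7\cdot 11\cdot 13$, I would identify each $(N,G) \in \calS$ with its pullback as an open subgroup $\tilde G \subseteq \GL_2(\ZZ_m)$. The natural morphism $Y_{G_2} \to Y_{G_1}$ associated to an inclusion $\tilde G_2 \subseteq \tilde G_1$ commutes with the projections to the $j$-line, so $\pi_{G_2}(Y_{G_2}(\QQ)) \subseteq \pi_{G_1}(Y_{G_1}(\QQ))$. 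Consequently $\calJ$ equals the union of $\pi_G(Y_G(\QQ))$ over only those $(N,G)$ that are \emph{maximal} in $\calS$ under this partial order, so it suffices to show this maximal set is finite.

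A maximal element of $\calS$ is realized as a maximal proper open subgroup (still containing $-I$ and with surjective determinant) of some $H$ lying in the collection
\[
\calL := \{H \subseteq \GL_2(\ZZ_m)\ \text{open} : -I \in H,\ \det H = \ZZ_m^\times,\ X_H(\QQ)\ \text{is infinite}\}.
\]
By Lemma~\ref{L:Frattini}, each such $H$ has only finitely many maximal proper open subgroups, so the finiteness of $\calJ$ reduces to showing that $\calL$ is finite.

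The main obstacle is proving $|\calL| < \infty$. For any $H \in \calL$, Faltings' theorem forces $X_H$ to have genus at most $1$, which by Proposition~\ref{P:isomorphic Riemann surfaces} equals the genus of the congruence subgroup $\Gamma_H$. The classification of Cummins-Pauli~\cite{MR2016709} cited earlier provides only finitely many conjugacy classes of congruence subgroups of genus at most $1$; this restricts $H \cap \SL_2(\ZZ_m)$ to lie in a finite collection of open subgroups of $\SL_2(\ZZ_m)$. For each fixed such $K$, the open subgroups $H \subseteq \GL_2(\ZZ_m)$ satisfying $H \cap \SL_2(\ZZ_m) = K$ and $\det H = \ZZ_m^\times$ correspond to subgroups of the finite quotient $\GL_2(\ZZ_m)/K$ mapping isomorphically onto $\ZZ_m^\times/\det K$ via the determinant, of which there are only finitely many since $\GL_2(\ZZ_m)/K$ is itself finite. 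Combining these three finiteness statements gives $|\calL| < \infty$, and together with Lemma~\ref{L:Frattini} and the covering reduction this yields the finiteness of $\calJ$.
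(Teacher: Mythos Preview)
Your overall strategy---reduce to the maximal elements of $\calS$, observe that each sits just below some $H$ in your set $\calL$, and bound $\calL$ via the finiteness of low-genus congruence subgroups---is sound and close in spirit to the paper's argument (the paper argues by contradiction, building an infinite descending chain inside what you call $\calL$, rather than bounding $\calL$ directly). One small point you leave implicit but which does hold: a maximal element $\tilde G$ of $\calS$ is in fact an \emph{unrestricted} maximal proper open subgroup of the minimal $H\in\calL$ above it, since any intermediate $H'$ automatically contains $-I$ and has $\det H'\supseteq\det\tilde G=\ZZ_m^\times$, hence lies in $\calL$, contradicting minimality. So Lemma~\ref{L:Frattini} applies as stated.

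However, there is a genuine error in your finiteness argument for $\calL$. You write that for fixed $K=H\cap\SL_2(\ZZ_m)$ the relevant $H$ correspond to subgroups of ``the finite quotient $\GL_2(\ZZ_m)/K$''. But $K\subseteq\SL_2(\ZZ_m)$, so $\det K=\{1\}$ and $[\GL_2(\ZZ_m):K]\geq[\GL_2(\ZZ_m):\SL_2(\ZZ_m)]=|\ZZ_m^\times|$, which is infinite; moreover $K$ need not be normal in $\GL_2(\ZZ_m)$. So neither the quotient nor its claimed finiteness makes sense as written. The conclusion you want is still true, and the cleanest repair is to note that any such $H$ satisfies $[\GL_2(\ZZ_m):H]=[\SL_2(\ZZ_m):K]$ (from $\det H=\ZZ_m^\times$ and $H\cap\SL_2(\ZZ_m)=K$), and $\GL_2(\ZZ_m)=\prod_{p\mid m}\GL_2(\ZZ_p)$ is topologically finitely generated, hence has only finitely many open subgroups of any given index. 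Alternatively, one can work in $N_K/K$ where $N_K$ is the normalizer of $K$ in $\GL_2(\ZZ_m)$: here the kernel of $\det$ is the finite group $N_{\SL_2(\ZZ_m)}(K)/K$, and the candidate $H/K$ are complements of this finite normal subgroup, which are finite in number by a cohomological argument. Either route closes the gap.
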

\begin{proof}
Fix pairs $(N,G), (N',G') \in \calS$ such that $N$ is a divisor of $N'$ and such that reduction modulo $N$ gives a well-defined map $G'\to G$.   This gives rise to a morphism $\varphi \colon Y_{G'} \to Y_G$ of curves over $\QQ$ such that $\pi_G \circ \varphi = \pi_{G'}$.   In particular, $\pi_{G'}(Y_{G'}(\QQ)) \subseteq \pi_G(Y_G(\QQ))$.    Therefore,
\[
\calJ = \bigcup_{(N,G) \in \calS'}  \pi_{G}(Y_G(\QQ)),
\]
where $\calS'$ is the set of pairs $(N,G) \in \calS$ for which there is no pair $(N',G') \in \calS - \{(N,G)\}$ with  $N'$ a divisor of $N$ so that the reduction modulo $N'$ defines a map $G \to G'$.    For each pair $(N,G) \in \calS'$, the set $Y_G(\QQ)$, and hence also $\pi_G(Y_G(\QQ))$, is finite.  The finiteness is immediate from the definition of $\calS$ when $Y_G$ has genus $0$ or $1$.  If $Y_G$ has genus at least $2$, then $Y_G(\QQ)$  is finite by Faltings' theorem.   So to prove that $\calJ$ is finite, it suffices to show that $\calS'$ is finite.\\

Let $m$ be the product of primes $\ell\leq 13$. For each pair $(N,G) \in \calS'$, let $\tilde{G}$ be the open subgroup of $\GL_2(\ZZ_m)$ that is the inverse image of $G$ under the reduction map $\GL_2(\ZZ_m) \to \GL_2(\ZZ/N\ZZ)$.   Note that we can recover the pair $(N,G)$ from $\tilde{G}$; $N \geq 1$ is the smallest integer (not divisible by primes $\ell>13$) such that $\tilde{G}$ contains $\{A\in \GL_2(\ZZ_m) : A \equiv I \pmod{N}\}$ and $G$ is the image of $\tilde{G}$ in $\GL_2(\ZZ/N\ZZ)$.     Define the set 
\[
\calG := \{ \tilde{G} : (N,G) \in \calS' \}.
\]
We have $|\calG| = |\calS'|$, so it suffices to show that the set $\calG$ is finite.

Suppose that $\calG$ is infinite.   We now recursively define a sequence $\{M_i\}_{i\geq 0}$ of open subgroups of $\GL_2(\ZZ_m)$ such that
\begin{align} \label{E:Mi chain}
M_0 \supsetneq M_1 \supsetneq M_2 \supsetneq M_3 \supsetneq \ldots
\end{align}
and such that each $M_i$ has infinitely many subgroups in $\calG$.   Set $M_0 := \GL_2(\ZZ_m)$.   Take an $i\geq 0$ for which $M_i$ has been defined and has infinitely many subgroups in $\calG$.   Since $M_i$ has only finite many open maximal subgroups by Lemma~\ref{L:Frattini}, one of the them contains infinitely many subgroups in $\calG$; denote such a maximal subgroup by $M_{i+1}$.

Take any $i\geq 0$.   Since there are elements of $\calG$ that are proper subgroups of $M_i$, we deduce that $M_i \supsetneq \tilde{G}$ for some pair $(N,G) \in \calS'$.   The group $G = \tilde{G}(N)$ is thus a proper subgroup of $M_i(N) \subseteq \GL_2(\ZZ/N\ZZ)$.   We have $\det(M_i(N))=(\ZZ/N\ZZ)^\times$ and $-I \in M_i(N)$ since $G$ has these properties.       We have $(N,M_i(N)) \notin \calS$ since otherwise $(N,G)$ would not be an element of $\calS'$.   Therefore, the modular curve $X_{M_i(N)}$ has genus $0$ or $1$.    By Proposition~\ref{P:isomorphic Riemann surfaces}, the congruence subgroup $\Gamma_i := \Gamma_{M_i(N)}$ (which consists of $A\in \SL_2(\ZZ)$ with $A$ modulo $N$ in $M_i(N)$) has genus $0$ or $1$.   We have 
\[
[\SL_2(\ZZ):\Gamma_i] = [\SL_2(\ZZ/N\ZZ): M_i(N) \cap \SL_2(\ZZ/N\ZZ) ] = [\GL_2(\ZZ/N\ZZ) : M_i(N)] = [\GL_2(\ZZ_m): M_i],
\]
so $[\SL_2(\ZZ):\Gamma_i] \to \infty$ as $i\to \infty$ by the proper inclusions (\ref{E:Mi chain}).    In particular, there are infinitely many congruence subgroup of genus $0$ or $1$.  However, there are only finitely many congruence subgroups of $\SL_2(\ZZ)$ of genus $0$ and $1$; moreover, the level of such congruence subgroups is at most $52$ by \cite{MR2016709}.  This contradiction implies that $\calG$, and hence $\calS'$, is finite.  
\end{proof}

For each prime $\ell$, let $\calJ_\ell$ be the set of $j$-invariants of elliptic curves $E/\QQ$ for which $\rho_{E,\ell}$ is not surjective.

\begin{prop} \label{P:finiteness ell}
The set $\calJ_\ell$ is finite for all primes $\ell >13$.
\end{prop}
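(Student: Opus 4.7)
The plan is to cover $\calJ_\ell$ by finitely many sets of the form $\pi_G(Y_G(\QQ))$, one per conjugacy class of certain proper subgroups $G$ of $\GL_2(\FF_\ell)$, and to argue that each such set is finite. Fix a prime $\ell > 13$ and suppose $E/\QQ$ has $\rho_{E,\ell}$ non-surjective. Let $G$ be the subgroup of $\GL_2(\FF_\ell)$ generated by $\rho_{E,\ell}(\Gal_\QQ)$ together with $-I$ and the scalars $\FF_\ell^\times \cdot I$; then $G$ is proper, contains $-I$, and has $\det(G)=\FF_\ell^\times$. It is contained in some maximal proper $\widetilde G$ with the same properties. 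Apart from the possibility $j_E \in \{0,1728\}$ (which adds at most two extra $j$-invariants to $\calJ_\ell$), Proposition~\ref{P:key} places $j_E$ in $\pi_{\widetilde G}(Y_{\widetilde G}(\QQ))$. Since the number of conjugacy classes of such maximal $\widetilde G$ is finite for fixed $\ell$, it suffices to prove that $\pi_{\widetilde G}(Y_{\widetilde G}(\QQ))$ is finite for every such $\widetilde G$.

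By Dickson's classification of subgroups of $\GL_2(\FF_\ell)$, every such maximal $\widetilde G$ is, up to conjugacy, one of: a Borel subgroup; the normalizer of a split Cartan; the normalizer of a non-split Cartan; or the preimage in $\GL_2(\FF_\ell)$ of a subgroup of $\PGL_2(\FF_\ell)$ isomorphic to $A_4$, $S_4$, or $A_5$. I would handle each case separately:
\begin{itemize}
\item \emph{Borel case.} Here $X_{\widetilde G}=X_0(\ell)$. By Mazur's theorem that $J_0(\ell)(\QQ)$ is finite for every prime $\ell$, the set $X_0(\ell)(\QQ)$ is finite.
\item \emph{Split Cartan normalizer.} Here $X_{\widetilde G}=X_{\mathrm{sp}}^+(\ell)$, which has genus at least $2$ for every $\ell > 13$; Faltings' theorem then gives finiteness.
\item \emph{Non-split Cartan normalizer.} Here $X_{\widetilde G}=X_{\mathrm{ns}}^+(\ell)$, which has genus at least $3$ for every $\ell > 13$; again Faltings applies.
\item \emph{Exceptional case.} The index $[\GL_2(\FF_\ell):\widetilde G]$ is large enough that, for every $\ell > 13$, the genus of $X_{\widetilde G}$ is at least $2$, so Faltings applies.
\end{itemize}
Taking the union over the finitely many conjugacy classes of $\widetilde G$ exhibits $\calJ_\ell$ as a finite union of finite sets, so $\calJ_\ell$ is finite.

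The main obstacle is verifying the genus bounds for each family of maximal $\widetilde G$; these are routine computations via Riemann--Hurwitz or Shimura's dimension formulas, but they require a brief case analysis. The most delicate input is Mazur's theorem in the Borel case at $\ell=17$ and $\ell=19$: here $X_0(\ell)$ has genus only $1$, so Faltings does not suffice and one must invoke Mazur's explicit determination of $J_0(\ell)(\QQ)$.
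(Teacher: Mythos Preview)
Your overall strategy matches the paper's: reduce to maximal subgroups $G\subseteq\GL_2(\FF_\ell)$ with $\det(G)=\FF_\ell^\times$ and $-I\in G$, and show each $X_G(\QQ)$ is finite. The paper reaches the same endpoint but organizes the case analysis differently: rather than invoking Dickson's classification and checking genus formulas family by family, it appeals to the Cummins--Pauli tables to observe directly that for $\ell>13$ the only congruence subgroups $\Gamma_G$ of genus $\leq 1$ (with $G$ maximal of full determinant) are the Borel cases at $\ell\in\{17,19\}$, and then handles all remaining cases by Faltings in one stroke. Your route is more classical and self-contained; the paper's is shorter given that Cummins--Pauli is already being used elsewhere.

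There is, however, a genuine misstatement in your Borel case. You write ``Mazur's theorem that $J_0(\ell)(\QQ)$ is finite for every prime $\ell$'', but this is false: for instance $J_0(37)$ has a rank-$1$ elliptic curve factor, so $J_0(37)(\QQ)$ is infinite. What you actually need is much less. For primes $\ell\geq 23$ the curve $X_0(\ell)$ already has genus $\geq 2$, so Faltings suffices; only $\ell\in\{17,19\}$ have genus $1$, and there $X_0(\ell)$ is an elliptic curve of conductor $\ell$, all of which have rank $0$ (a direct computation, not Mazur's isogeny theorem). This is exactly how the paper handles it. Your closing remark shows you are aware that $\ell=17,19$ are the delicate cases, so you should replace the blanket appeal to finiteness of $J_0(\ell)(\QQ)$ with this split: Faltings for $\ell\geq 23$, rank-$0$ verification for $\ell\in\{17,19\}$.

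One minor point: when you enlarge $\rho_{E,\ell}(\Gal_\QQ)$ by adjoining the scalars, you assert the result is still proper. This is true for $\ell\geq 5$ (a subgroup surjecting onto $\PGL_2(\FF_\ell)$ with full determinant must be all of $\GL_2(\FF_\ell)$, since $\SL_2(\FF_\ell)$ is perfect), but it deserves a sentence of justification. The paper sidesteps this by working directly with maximal subgroups of $\GL_2(\FF_\ell)$ having full determinant, without first enlarging by scalars.
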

\begin{proof}
Fix a prime $\ell > 13$.    By Proposition~\ref{P:key}, it suffices to show that $X_G(\QQ)$ is finite for each of the maximal subgroups $G$ of $\GL_2(\ZZ/\ell\ZZ)$ that satisfy $\det(G)=(\ZZ/\ell\ZZ)^\times$.   Fix such a group $G$ and let $\Gamma=\Gamma_G$ be the congruence subgroup consisting of $A\in \SL_2(\ZZ)$ for which $A$ modulo $N$ lies in $G$.   The curve $X_G$ has the same genus as $\Gamma$ by Proposition~\ref{P:isomorphic Riemann surfaces}.     If $\Gamma$ has genus at least $2$, then $X_G(\QQ)$ is finite by Faltings' theorem.

We may thus suppose that $\Gamma$ has genus $0$ or $1$.   From the description of congruence subgroups of genus $0$ and $1$ in \cite{MR2016709}, we find that $\ell \in \{17,19\}$ and that $\Gamma$ modulo $\ell$ contains an element of order $\ell$.   Therefore, after replacing $G$ by a  conjugate in $\GL_2(\ZZ/\ell\ZZ)$, we may assume that $G$ is the subgroup of upper-triangular matrices.    So we are left to consider the modular curve $X_0(\ell) := X_G$ with $\ell \in \{17,19\}$.   The curve $X_0(\ell)$, with $\ell \in \{17,19\}$, indeed has finitely many points (it has a rational cusp, so it is an elliptic curve of conductor $\ell \in \{17,19\}$; all such elliptic curves have rank $0$).
\end{proof}  
\subsection{Proof of Theorem~\ref{T:main ineffective scrI}}
Let $\calJ$ and $\calJ_\ell$ (with $\ell>13$) be the sets from \S\ref{SS:exceptional points}.    Define the set 
\[
J:= \calJ \cup \bigcup_{13 < \ell  \leq c} \calJ_\ell;
\]
it is finite by Propositions~\ref{P:finiteness} and \ref{P:finiteness ell}.

Take any elliptic curve $E/\QQ$ with $j_E \notin J$ for which $\rho_{E,\ell}$ is surjective for all $\ell>c$.     Since $j_E \notin J_\ell$ for $13<\ell\leq c$, the representation $\rho_{E,\ell}$ is surjective for all $\ell > 13$.

  Let $\Gamma_E$ be the congruence subgroup from \S\ref{SS:indices scrI}; denote its level by $N_0$ and define $N$ as in the beginning of the section.    Let $G(N)$ be the subgroup of $\GL_2(\ZZ/N\ZZ)$ from \S\ref{SS:indices scrI} associated to $E/\QQ$.

\begin{lemma} \label{L:condition e holds}
The set $X_{G(N)}(\QQ)$ is infinite.
\end{lemma}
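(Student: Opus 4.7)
The plan is to argue by contradiction: I suppose $X_{G(N)}(\QQ)$ is finite and show that the pair $(N, G(N))$ must then belong to the set $\calS$ from Section~\ref{SS:exceptional points}, which would force $j_E \in \calJ \subseteq J$, contradicting the hypothesis $j_E \notin J$ of Theorem~\ref{T:main ineffective scrI}.

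First I would verify the ambient conditions defining $\calS$. The hypothesis of Theorem~\ref{T:main ineffective scrI} (combined with $j_E \notin \calJ_\ell$ for $13 < \ell \leq c$) guarantees $\rho_{E,\ell}$ is surjective for every $\ell > 13$, so the integer $m$ from Section~\ref{SS:indices scrI} is supported only on primes $\leq 13$. Hence $N_0$ (which divides a power of $m$) and $N \in \{N_0, 2N_0, 4N_0\}$ are likewise supported only on primes $\leq 13$. The equality $\det(G(N)) = (\ZZ/N\ZZ)^\times$ is furnished by Proposition~\ref{P:scrI relevance}, and $-I \in G(N)$ because $G \supseteq \Zhat^\times \cdot I$ by construction. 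The remaining condition in the definition of $\calS$ (genus at least $2$ or finitely many rational points) is exactly the assumption I wish to contradict, so $(N, G(N)) \in \calS$.

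Next I would place $j_E$ on the modular curve. Enlarging $J$ harmlessly to include $\{0, 1728\}$ and the thirteen CM $j$-invariants over $\QQ$ (a finite set, so permissible under the statement of Theorem~\ref{T:main ineffective scrI}), the assumption $j_E \notin J$ forces $E$ to be non-CM with $j_E \notin \{0, 1728\}$; these are precisely the standing hypotheses used in Section~\ref{SS:indices scrI} and in Proposition~\ref{P:key}. Since $\rho_E(\Gal_\QQ) \subseteq G$ by the very definition of $G$, reducing modulo $N$ gives $\rho_{E,N}(\Gal_\QQ) \subseteq G(N)$, and Proposition~\ref{P:key} then produces $j_E \in \pi_{G(N)}(Y_{G(N)}(\QQ)) \subseteq \calJ \subseteq J$, contradicting $j_E \notin J$.

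There is no real technical obstacle here, as the substantive work has been absorbed into the finiteness of $\calJ$ established in Proposition~\ref{P:finiteness} (which is where Faltings' theorem does its work). The present lemma is essentially a tautological unwinding of the definitions, certifying that an elliptic curve whose $j$-invariant avoids $J$ cannot have its mod $N$ image landing in a group arising from a modular curve in $\calS$.
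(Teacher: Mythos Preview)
Your proposal is correct and follows essentially the same approach as the paper: both arguments verify that $N$ has no prime factors above $13$, that $\det(G(N))=(\ZZ/N\ZZ)^\times$ and $-I\in G(N)$, and then use $j_E \in \pi_{G(N)}(Y_{G(N)}(\QQ))\subseteq \calJ\subseteq J$ to force $(N,G(N))\notin\calS$, whence $X_{G(N)}(\QQ)$ is infinite. The only cosmetic difference is that the paper phrases this directly rather than by contradiction, and it does not explicitly enlarge $J$ by $\{0,1728\}$ since the surjectivity hypothesis already rules out CM (and hence $j_E\in\{0,1728\}$).
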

\begin{proof}
Take $\calS$ as in \S\ref{SS:exceptional points}.  
The integer $N$ is not divisible by any prime $\ell>13$ since $\rho_{E,\ell}$ is surjective for all $\ell>13$.    If $(N,G(N)) \in \calS$, then $j_E \in \pi_{G(N)}(Y_{G(N)}(\QQ)) \subseteq \calJ \subseteq J$.    Since $j_E \notin J$ by assumption, we have $(N,G(N)) \notin \calS$.   We have $\det(G(N))=(\ZZ/N\ZZ)^\times$ and $-I \in G(N)$, so $(N,G(N)) \notin \calS$ implies that $X_{G(N)}$ has genus $0$ or $1$, and that $X_{G(N)}(\QQ)$ is infinite.   
\end{proof}

Lemmas~\ref{L:contained in scrI} and \ref{L:condition e holds} together imply that $[\GL_2(\Zhat): \rho_E(\Gal_\QQ)]$ is an element of $\scrI$.

\subsection{Proof of Lemma~\ref{L:technical group section}} \label{SS:proof technical group}

Let $d$ be the product of primes that divide $m$ but not $N$; it divides $2\cdot 3\cdot 5$.  Since $G_m \cap \SL_2(\ZZ_m)$ contains $\{A\in \SL_2(\ZZ_m): A\equiv I \pmod{N_0}\}$, we have
\[
G_m \cap \SL_2(\ZZ_m) = W \times \SL_2(\ZZ_d).
\]
for a subgroup $W$ of $\SL_2(\ZZ_N)$ containing $\{A\in \SL_2(\ZZ_N): A\equiv I \pmod{N_0}\}$.   Since $G_m\cap\SL_2(\ZZ_m)$ is a normal subgroup of $G_m$, the group $W$ is normal in $G_N$.    We have $G_d=\GL_2(\ZZ_d)$, since $G_d\supseteq \SL_2(\ZZ_d)$ and $\det(G_d)=\ZZ_d^\times$ (note that $\det(\rho_{E}(\Gal_\QQ))=\Zhat^\times$).  

 Now consider the quotient map 
 \[
 \varphi\colon G_N \times G_d \to G_N/W \times G_d/\SL_2(\ZZ_d).
 \]  
 We can view $G_m$ as an open subgroup of $G_N\times G_d$; it projects surjectively on both of the factors.   The group $G_m$ contains $W\times \SL_2(\ZZ_d)$, so there is an open subgroup $Y$ of $G_N/W \times G_d/\SL_2(\ZZ_d)$ for which $G_m = \varphi^{-1}(Y)$.  
 
 Take any matrices $B_1, B_2 \in G_d=\GL_2(\ZZ_d)$ with $\det(B_1)=\det(B_2)$; equivalently, with the same image in $G_d/\SL_2(\ZZ_d)$.    There is a matrix $A\in G_N$ such that $(A,B_1)\in G_m$ and hence also $(A,B_2)\in G_m$ since $\varphi(A,B_1)=\varphi(A,B_2)$.    Therefore, the commutator subgroup $G_{m}'$ contains the element
\[
(A,B_1)\cdot (A,B_2)\cdot (A,B_1)^{-1} \cdot (A,B_2)^{-1} = (I, B_1 B_2 B_1^{-1} B_2^{-1}).
\]
By Lemma~\ref{L:SL2 derived group theory}(iv) below, the group $\GL_2(\ZZ_d)'$ is topologically generated by the set
\[
\big\{ B_1 B_2 B_1^{-1} B_2^{-1} : B_1,B_2\in \GL_2(\ZZ_d),\, \det(B_1)=\det(B_2) \big\},
\]
and hence $G_m' \supseteq \{I\} \times \GL_2(\ZZ_d)'$.    We have an inclusion $G_m' \subseteq G_N' \times G_d'=G_N' \times \GL_2(\ZZ_d)'$ and the projections of $G_m'$ onto the first and second factors are both surjective; since $G_m \supseteq \{I\} \times \GL_2(\ZZ_d)'$ we find that 
\begin{align} \label{E:Gm commutator}
G_m'=G_N' \times \GL_2(\ZZ_d)'.
\end{align} 

\begin{lemma} \label{L:SL2 derived group theory}
\begin{romanenum}  
\item \label{L:SL2 derived group theory a}
For $\ell\geq 5$, we have $\SL_2(\ZZ_\ell)' = \SL_2(\ZZ_\ell)$.
\item \label{L:SL2 derived group theory b} 
For $\ell=2$ or $3$, let $b=4$ or $3$, respectively.  Then reduction modulo $b$ induces an isomorphism 
\[
\SL_2(\ZZ_\ell)/\SL_2(\ZZ_\ell)' \xrightarrow{\sim} \SL_2(\ZZ/b\ZZ)/\SL_2(\ZZ/b\ZZ)'
\]
of cyclic groups of order $b$.
\item  \label{L:SL2 derived group theory c}
We have $\GL_2(\ZZ_3)'=\SL_2(\ZZ_3)$ and $[\SL_2(\ZZ_2):\GL_2(\ZZ_2)']=2$.
\item \label{L:SL2 derived group theory d}
For each positive integer $d$, the group $\GL_2(\ZZ_d)'$ is topologically generated by the set 
\[
\{ ABA^{-1}B^{-1} : A, B \in \GL_2(\ZZ_d), \, \det(A)=\det(B) \}.
\]
\end{romanenum}
\end{lemma}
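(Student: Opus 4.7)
The plan is to work prime-by-prime, handling the single-prime assertions (i)--(iii) first and then reducing (iv) to them via the decomposition $\GL_2(\ZZ_d)=\prod_{\ell\mid d}\GL_2(\ZZ_\ell)$. For (i), I would combine the classical perfectness of $\SL_2(\FF_\ell)$ (for $\ell\geq 5$) with a Lie-algebra lifting along the principal congruence filtration. Let $\Gamma(n):=\ker(\SL_2(\ZZ_\ell)\to\SL_2(\ZZ/\ell^n\ZZ))$; perfectness gives $\SL_2(\ZZ_\ell)=\SL_2(\ZZ_\ell)'\cdot\Gamma(1)$, so it suffices to show $\Gamma(1)\subseteq\SL_2(\ZZ_\ell)'$. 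The quotient $\Gamma(n)/\Gamma(n+1)$ identifies with $\s_2(\FF_\ell)$ and, for $X=I+\ell^a U$ and $Y=I+\ell^b V$, one checks $[X,Y]\equiv I+\ell^{a+b}[U,V]\pmod{\ell^{a+b+1}}$; perfectness of the Lie algebra $\s_2(\FF_\ell)$ for $\ell\geq 5$ then lets the argument iterate, and passage to the inverse limit finishes. For (ii), the same bracket calculation handles levels $\geq b$ (the first level at which the Lie-bracket iteration becomes perfect enough to continue for $\ell=2,3$), reducing the problem to the finite computation of $\SL_2(\ZZ/b\ZZ)^{\ab}$: for $b=3$ the commutator subgroup of $\SL_2(\FF_3)$ is $Q_8$, giving abelianization $\ZZ/3$; for $b=4$ a direct enumeration of the $48$ elements (or a careful analysis of the sequence $1\to\Gamma(1)/\Gamma(2)\to\SL_2(\ZZ/4\ZZ)\to\SL_2(\FF_2)\to 1$) yields $\SL_2(\ZZ/4\ZZ)^{\ab}\cong\ZZ/4$.

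For (iii), both inclusions inside $\SL_2(\ZZ_\ell)$ are automatic from $\det$. For the reverse, the explicit identity
\[
\bigl[\,\mathrm{diag}(a,1),\, I+E_{12}\bigr]=I+(a-1)E_{12},
\]
applied with $a\in\ZZ_\ell^\times$, together with its lower-triangular analogue and the lifting machinery from (i)--(ii), produces $I+cE_{12}$ and $I+cE_{21}$ for enough $c\in\ZZ_\ell$ to topologically generate $\SL_2(\ZZ_3)$ inside $\GL_2(\ZZ_3)'$. For $\ell=2$ the same calculation leaves a residual $\ZZ/2$-obstruction matching the abelianization of $\SL_2(\FF_2)=S_3$, pinning the index at exactly $2$.

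For (iv), I reduce to the local claim that, for each prime $\ell$, the closed subgroup $H_\ell\subseteq\GL_2(\ZZ_\ell)$ topologically generated by its constrained commutators equals $\GL_2(\ZZ_\ell)'$: inserting identity matrices in the other $\ell'$-components shows that local constrained commutators arise as restrictions of global ones. Normality of $H_\ell$ (conjugation preserves both $\det$ and commutator shape) together with the identity $[A,DB']=[A,D]\cdot D[A,B']D^{-1}$, applied with $D=\mathrm{diag}(\det B/\det A,1)$ and $B'=D^{-1}B$, shows that every $[A,B]$ is congruent modulo $H_\ell$ to a commutator $[A,D]$ with $D$ diagonal. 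The main obstacle I anticipate is handling these residual $[A,D]$: for $\ell\geq 5$ they lie in $H_\ell$ automatically because $\SL_2(\ZZ_\ell)=\SL_2(\ZZ_\ell)'\subseteq H_\ell$ by (i), and for $\ell=2,3$ I plan to exhibit each $[A,D]$ as a product of the explicit constrained generators from (iii), for instance by putting $A$ into a Bruhat-like normal form so that $[A,D]$ either vanishes (when $A$ is diagonal) or reduces to a unipotent commutator already accounted for.
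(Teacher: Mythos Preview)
Your treatment of (i)--(iii) is essentially in line with the paper: the paper simply cites \cite{Zywina-Maximal}*{Lemma~A.1} for (i) and (ii), which amounts to exactly the congruence-filtration/Lie-bracket lifting you sketch, and for (iii) it reduces via (ii) to the finite computations $\GL_2(\ZZ/3\ZZ)'=\SL_2(\ZZ/3\ZZ)$ and $[\SL_2(\ZZ/4\ZZ):\GL_2(\ZZ/4\ZZ)']=2$. Your explicit-element approach to (iii) is a legitimate alternative and would work with a bit more care at $\ell=2$.

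There is, however, a genuine gap in your plan for (iv) at $\ell\in\{2,3\}$. The commutators you produced in (iii), namely $[\operatorname{diag}(a,1),\,I+E_{12}]$, are \emph{not} constrained: the two factors have determinants $a$ and $1$. So you cannot invoke ``the explicit constrained generators from (iii)'' to absorb the residual $[A,D]$, and the Bruhat-normal-form idea as stated does not obviously produce constrained commutators either (for instance, when $A$ is unipotent you land on elements $I+cE_{12}$, and you still owe an argument that these lie in $H_\ell$). The paper sidesteps this entirely: since $H_\ell\supseteq\SL_2(\ZZ_\ell)'$, part (ii) reduces the question of whether $H_\ell=\GL_2(\ZZ_\ell)'$ to the image in $\SL_2(\ZZ/b\ZZ)/\SL_2(\ZZ/b\ZZ)'$, i.e.\ to checking that $\GL_2(\ZZ/b\ZZ)'$ is generated by commutators $[A,B]$ with $\det A=\det B$ --- a finite computation in $\GL_2(\ZZ/3\ZZ)$ and $\GL_2(\ZZ/4\ZZ)$. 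Inserting this reduction would close your gap cleanly; your commutator identity $[A,DB']=[A,D]\cdot D[A,B']D^{-1}$ is correct but unnecessary once you pass to the finite quotient.
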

\begin{proof}
For part (\ref{L:SL2 derived group theory a}) and (\ref{L:SL2 derived group theory b}), see \cite{Zywina-Maximal}*{Lemma~A.1}.  To verify (\ref{L:SL2 derived group theory c}),  it suffices by (\ref{L:SL2 derived group theory b}) to show that $\GL_2(\ZZ/3\ZZ)' = \SL_2(\ZZ/3\ZZ)$ and $[\SL_2(\ZZ/4\ZZ): \GL_2(\ZZ/4\ZZ)'] = 2$; this is an easy computation.

Finally consider (\ref{L:SL2 derived group theory d}).  Without loss of generality, we may assume that $d$ is a prime, say $\ell$.   The topological group generated by the set $\mathcal{C}=\{ ABA^{-1}B^{-1} : A, B \in \GL_2(\ZZ_\ell), \, \det(A)=\det(B) \}$ contains $\SL_2(\ZZ_\ell)'$, so it suffices to show that the image of $\mathcal{C}$ generates $\GL_2(\ZZ_\ell)'/\SL_2(\ZZ_\ell)'$.   If $\ell\geq 5$, this is trivial since $\GL_2(\ZZ_\ell)'$ and $\SL_2(\ZZ_\ell)'$ both equal $\SL_2(\ZZ_\ell)$ by (\ref{L:SL2 derived group theory a}).   For $\ell=2$ or $3$, it suffices by part (\ref{L:SL2 derived group theory b}) to show that $\GL_2(\ZZ/b\ZZ)'$ is generated by $ABA^{-1}B^{-1}$ with matrices $A,B\in \GL_2(\ZZ/b\ZZ)$ having the same determinant; this again is an easy calculation.
\end{proof}

Before computing $G'$, we first state Goursat's lemma; we will give a more general version than needed so that it can be cited in future work.

\begin{lemma}[Goursat's Lemma] \label{L:Goursat}  
Let $B_1,\ldots, B_n$ be profinite groups.  Assume that for distinct $1\leq i, j \leq n$, the groups $B_i$ and $B_j$ have no finite simple groups as common quotients.  Suppose that $H$ is a closed subgroup of $\prod_{i=1}^n B_i$ that satisfies $p_j(H)=B_j$ for all $j$ where $p_j\colon \prod_{i=1}^n B_i \to B_j$ is the projection map.    Then $H=\prod_{i=1}^n B_i$.
\end{lemma}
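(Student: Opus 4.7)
The plan is to proceed by induction on $n$. The case $n=1$ is trivial. For $n\geq 2$, assuming the lemma for $n-1$ factors, I would first show that the projection $p' \colon \prod_{i=1}^n B_i \to \prod_{i=1}^{n-1} B_i$ restricted to $H$ has image equal to the full product $\prod_{i=1}^{n-1} B_i$: indeed $p_j(p'(H)) = p_j(H) = B_j$ for each $j < n$, and the groups $B_1,\ldots, B_{n-1}$ still satisfy the pairwise hypothesis, so the inductive hypothesis applies to $p'(H)$. Then I would view $H$ as a closed subgroup of $(\prod_{i=1}^{n-1} B_i) \times B_n$ projecting onto each factor, so the problem reduces to the $n=2$ case applied to these two factors, provided I verify that $\prod_{i=1}^{n-1} B_i$ and $B_n$ share no common finite simple quotients.

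For the base case $n=2$, I would carry out the classical Goursat analysis. Set $N_1 := \{b\in B_1 : (b,e) \in H\}$ and $N_2 := \{b \in B_2 : (e,b) \in H\}$; both are closed normal subgroups (since $H$ is closed) and the standard computation shows that $H/(N_1\times N_2)$ is the graph of a topological isomorphism $B_1/N_1 \xrightarrow{\sim} B_2/N_2$. If this common quotient were nontrivial, then, being a nontrivial profinite group, it would admit a proper open normal subgroup (every open subgroup of a profinite group contains an open normal subgroup), hence a nontrivial finite quotient, and passing to a composition factor would produce a common nontrivial finite simple quotient of $B_1$ and $B_2$, contradicting the hypothesis. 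Therefore $N_1 = B_1$ and $N_2 = B_2$, which forces $B_1\times\{e\}$ and $\{e\}\times B_2$ both into $H$, so $H = B_1\times B_2$.

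It remains to verify the claim on simple quotients of products used in the induction: if $B_1,\ldots, B_n$ are pairwise sharing no finite simple quotient, then so are $\prod_{i=1}^{n-1} B_i$ and $B_n$. Suppose otherwise: fix a surjection $f\colon \prod_{i=1}^{n-1} B_i \twoheadrightarrow S$ with $S$ a common finite simple quotient with $B_n$. The image of the $i$-th factor under $f$ is a normal subgroup of $S$, hence equal to $\{e\}$ or $S$. If $S$ is non-abelian and at least two factors surject onto $S$, their images in $S$ commute elementwise (because the factors commute in $\prod B_i$), forcing $S$ abelian, a contradiction; so $f$ is supported on a single factor $B_i$, giving $B_i$ and $B_n$ a common simple quotient, contrary to hypothesis. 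If $S = \ZZ/p\ZZ$ is abelian, then $f = \sum f_i$ with $f_i \colon B_i \to \ZZ/p\ZZ$, and at least one $f_i$ must be surjective, again contradicting the pairwise hypothesis.

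The main obstacle is the last paragraph: treating simple quotients of products of profinite groups uniformly in the abelian and non-abelian cases. The classical Goursat correspondence in the base case is standard, and the induction step is routine once the statement about simple quotients of products is in hand; however, one must be attentive to closedness of $N_1,N_2$ and to the fact that nontrivial profinite groups always admit nontrivial finite simple quotients—both are needed to convert the Goursat isomorphism into a contradiction with the hypothesis.
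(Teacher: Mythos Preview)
Your proof is correct and follows the same inductive architecture as the paper: reduce to the case $n=2$ by showing that the projection of $H$ onto the first $n-1$ factors is surjective (via the inductive hypothesis), then verify that $\prod_{i<n} B_i$ and $B_n$ share no finite simple quotients so that the $n=2$ case applies. The one genuine difference is how you establish this last fact. You argue directly, splitting into abelian and non-abelian simple $S$ and using that the factor images are normal in $S$; the paper instead reuses the inductive hypothesis a second time: if $U\lhd C=\prod_{i<n}B_i$ is open with $C/U$ simple, then $U\neq C$, so some projection $p_j(U)\neq B_j$, and from simplicity one deduces $\ker(p_j)\subseteq U$, hence $C/U$ is a quotient of $B_j$. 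Your argument is more elementary and self-contained; the paper's is a bit slicker and avoids the abelian/non-abelian case split. Either route works.
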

\begin{proof}
We proceed by induction on $n$.   The case $n=1$ is trivial, so assume that $n=2$.  The kernel of $p_1|_H$ is a closed subgroup of $H$ of the form $\{I\}\times N_2$, and similarly the kernel of $p_2|_H$ is of the form $N_1\times\{I\}$.   The group $N=N_1\times N_2$ is a closed normal subgroup of $H$.   Since $p_1|_H$ is surjective, we find that $N_1=p_1(N)$ is a closed normal subgroup of $B_1$; this gives an isomorphism $H/N\cong B_1/N_1$ of profinite groups.  Similarly, we have $H/N\cong B_2/N_2$ and thus $B_1/N_1$ and $B_2/N_2$ are isomorphism.  Since we have assumed that $B_1$ and $B_2$ have no common finite simple quotients, we deduce that $B_1=N_1$ and $B_2=N_2$.  This proves the $n=2$ case since $H$ contains $N_1\times N_2=B_1\times B_2$.   

Now fix an $n\geq 3$ and assume that the $n-1$ case of the lemma has been proved.   Then the image $\tilde{H}$ of $H$ in $C:=\prod_{i=1}^{n-1}B_i$ is a closed subgroup such that the projection $\tilde{H}\to B_i$ is surjective for all $1\leq i \leq n-1$.  By our inductive hypothesis, we have $\tilde{H}=C$.   So $H$ is a closed subgroup of $C\times B_n$ and the projections $H\to C$ and $H\to B_n$ are surjective.  By the $n=2$ case, it suffices to show any finite simple quotient of $C$ is not a quotient of $B_n$.  Take any open normal subgroup $U$ of $C$ such that $C/U$ is a finite simple group.   There is an integer $1\leq j \leq n-1$ for which the projection $U\to B_j$ is not surjective (if not, then we could use our inductive hypothesis to show that $U=C$).   For simplicity, suppose $j=1$; then $U$ is of the form $N_1 \times B_2 \times \cdots\times B_{n-1}$ where $N_1$ is an open normal subgroup of $B_1$.   Since $C/U\cong B_1/N_1$, we deduce from the hypothesis on the $B_i$ that $C/U$ is not a quotient of $B_n$.
\end{proof}

We claim that $G_\ell' = \SL_2(\ZZ_\ell)$ for every prime $\ell \nmid m$.    We have the easy inclusions $G_\ell '  \subseteq \GL_2(\ZZ_\ell)' \subseteq \SL_2(\ZZ_\ell)$.     By \cite{MR1043865}*{IV~Lemma~3} and $\ell >5$ (since $\ell\nmid m$), we have $G_\ell'=\SL_2(\ZZ_\ell)$ if and only if the image of $G_\ell'$ in $\SL_2(\ZZ/\ell\ZZ)$ is $\SL_2(\ZZ/\ell\ZZ)$.    It thus suffices to show that $\rho_{E,\ell}(\Gal_\QQ)' = \SL_2(\ZZ/\ell\ZZ)$.     Since $\ell \nmid m$, we have $\rho_{E,\ell}(\Gal_\QQ) = \GL_2(\ZZ/\ell\ZZ)$ and hence $\rho_{E,\ell}(\Gal_\QQ)' = \SL_2(\ZZ/\ell\ZZ)$ by Lemma~\ref{L:SL2 derived group theory}(\ref{L:SL2 derived group theory a}); this proves our claim.\\

We can view $G'$ as a subgroup of $G_m' \times \prod_{\ell\nmid m} \SL_2(\ZZ_\ell)$.    The projection of $G'$ to the the factors $G_m'$ and $\SL_2(\ZZ_\ell)=G_\ell'$ with $\ell\nmid m$ are all surjective.

Fix a prime $\ell\geq 5$.  The simple group $\PSL_2(\FF_\ell)$ is a quotient of $\SL_2(\ZZ_\ell)$.  Since $\ell$-groups are solvable and $\SL_2(\ZZ_\ell)'=\SL_2(\ZZ_\ell)$ by Lemma~\ref{L:SL2 derived group theory}(\ref{L:SL2 derived group theory a}), we find that $\PSL_2(\FF_\ell)$ is the only simple group that is a quotient of $\SL_2(\ZZ_\ell)$.   Note that the groups $\PSL_2(\FF_\ell)$ are non-isomorphic for different $\ell$; in fact, they have different cardinalities.   

Take any prime $\ell\nmid m$, and hence $\ell>5$.  We claim that the simple group $\PSL_2(\FF_\ell)$ is not isomorphic to a quotient of $G_m'$.  Indeed, \emph{any} closed subgroup $H$ of $\GL_2(\ZZ_m)$ has no quotients isomorphic to $\PSL_2(\FF_\ell)$ with $\ell>5$ and $\ell\nmid m$ (this follows from the calculation of the groups $\textrm{Occ}(\GL_2(\ZZ_\ell))$ in \cite{MR1484415}*{IV-25}).   We can now apply Goursat's lemma (Lemma~\ref{L:Goursat}) to deduce that 
\[
G'=G_m' \times \prod_{\ell\nmid m} \SL_2(\ZZ_\ell).
\]   
Therefore, $[\SL_2(\Zhat): G'] = [\SL_2(\ZZ_m) : G_m']$.     By (\ref{E:Gm commutator}), we have 
\[
[\SL_2(\ZZ_m) : G_m']=[\SL_2(\ZZ_N): G_N'] \cdot [\SL_2(\ZZ_d): \GL_2(\ZZ_d)']. 
\]
By Lemma~\ref{L:SL2 derived group theory}, $[\SL_2(\ZZ_d): \GL_2(\ZZ_d)']=\prod_{\ell |d} [\SL_2(\ZZ_\ell): \GL_2(\ZZ_\ell)']$ is equal to $1$ if $d$ is odd and $2$ if $d$ is even.   Since $N$ and $d$ have opposite parities, we conclude that $[\SL_2(\ZZ_m) : G_m']$ is equal to $[\SL_2(\ZZ_N): G_N']$ if $N$ is even and $[\SL_2(\ZZ_N): G_N']\cdot 2$ if $N$ is odd.    The lemma is now immediate.

\section{Index computations} \label{S:index}

In \S\ref{SS:main results}, we defined the set
\[
\calI= \left\{\begin{array}{c}2, 4, 6, 8, 10, 12, 16, 20, 24, 30, 32, 36, 40, 48, 54, 60, 72, 84, 96, 108, 112,120, 144, \\192, {220}, {240},  288, 336, {360},  384, {504}, 576, 768, 864, 1152, 1200, 1296, 1536 \end{array}\right\}.
\]
In \S\ref{P:theoretic work}, we defined the set of integers
\[
\scrI:= {\bigcup}_{\Gamma} \scrI(\Gamma),
\]
where $\Gamma$ runs over the congruence subgroups of $\SL_2(\ZZ)$ of genus $0$ or $1$.  The goal of this section is to outline the computations needed to verify the following.

\begin{prop} \label{P:calI computation}
We have $\scrI = \calI$.
\end{prop}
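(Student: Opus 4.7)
The plan is a structured computation organized around the Cummins--Pauli classification \cite{MR2016709} of genus $0$ and $1$ congruence subgroups of $\SL_2(\ZZ)$. First I would loop over the finitely many conjugacy classes of such $\Gamma$ in their tables, extracting for each the level $N_0$ and the modulus $N$ of Definition~\ref{D:I(Gamma)}. For each $\Gamma$, I would enumerate every subgroup $G(N) \subseteq \GL_2(\ZZ/N\ZZ)$ satisfying conditions (a)--(d): condition (a) pins $G(N) \cap \SL_2(\ZZ/N\ZZ) = \pm\Gamma \bmod N$, so $G(N)$ necessarily lies between $\pm\Gamma \bmod N$ and its normalizer in $\GL_2(\ZZ/N\ZZ)$, reducing the search to a finite sublattice, while (b), (c), (d) are direct finite checks.

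The core step is testing condition (e), that $X_{G(N)}(\QQ)$ is infinite, without writing down an equation for $X_{G(N)}$. By Proposition~\ref{P:isomorphic Riemann surfaces} this curve has genus $0$ or $1$. In the genus $0$ case, infinitude is equivalent to $X_{G(N)} \cong \PP^1_\QQ$, and by the Hasse principle for conics this reduces to checking local solvability: the real condition is Proposition~\ref{P:R-points for YG}, and at each prime $p$ the $\FF_p$-point recipe of \S\ref{SS:Fp points} together with Hensel's lemma handles all but finitely many local obstructions, which are resolved by direct analysis. In the genus $1$ case, I would compute $|X_{G(N)}(\FF_p)|$ for enough primes $p$ to determine the trace of Frobenius on the Jacobian; since the conductor is bounded in terms of $N$, this pins down the Jacobian as a specific isogeny class in Cremona's tables. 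Condition (e) then holds iff this elliptic curve has positive Mordell--Weil rank and $X_{G(N)}(\QQ)$ is non-empty, the latter detected via a rational cusp coming from Lemma~\ref{L:cusp bijection} or a known rational point in $\pi_G(Y_G(\QQ))$.

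Finally, for each $G(N)$ passing all five conditions I would compute the contribution $[\SL_2(\ZZ_N) : G']\cdot 2/\gcd(2,N)$ to $\scrI(\Gamma)$, where $G\subseteq \GL_2(\ZZ_N)$ is the full preimage of $G(N)$; since $G$ contains the kernel of reduction modulo $N$, the commutator $G'$ is itself open and the index is determined by a direct finite-group computation modulo a sufficiently high power of $N$. Collecting across all $\Gamma$ yields $\scrI$, to be compared against the $37$ values of $\calI$. The principal obstacle is sheer volume: the Cummins--Pauli tables contain several hundred relevant subgroups, each producing many candidate $G(N)$, making the \texttt{Magma} automation referenced in the acknowledgments essential; among the individual steps the most delicate is the genus $1$ rank determination, since the absence of an explicit model forces every Jacobian invariant to be extracted from $\FF_p$-point counts.
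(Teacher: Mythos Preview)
Your proposal follows the same overall strategy as the paper's proof in \S\ref{S:index}: loop over the Cummins--Pauli genus $0$ and $1$ congruence subgroups, enumerate the admissible $G(N)$ via the normalizer of $\pm\Gamma\bmod N$, test condition~(\ref{I:e}), and compute the index modulo a high enough power of $N$. Two tactical shortcuts in the paper are worth noting.

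In genus $0$, rather than verifying condition~(\ref{I:e}) for each $G(N)$ individually, the paper sandwiches $\scrI(\Gamma)$ between two easier sets: $\scrI'(\Gamma)$, which drops~(\ref{I:e}) entirely, and $\scrI''(\Gamma)$, which replaces~(\ref{I:e}) by the requirement that $X_{G(N)}^\infty(\QQ_p)=\emptyset$ for at most one prime $p\mid N$ (a rational cusp over $\QQ_p$, detectable purely group-theoretically via Lemma~\ref{L:cusp bijection}, already supplies a $\QQ_p$-point, and Hilbert reciprocity absorbs a single missing prime). Since the unions $\bigcup_{\Gamma\in S_0}\scrI'(\Gamma)$ and $\bigcup_{\Gamma\in S_0}\scrI''(\Gamma)$ both turn out to equal $\calI_0$, no per-group verification of~(\ref{I:e}) is ever needed. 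Your phrase ``resolved by direct analysis'' for local obstructions at primes $p\mid N$ hides exactly the step the paper's cusp trick makes clean.

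In genus $1$, the paper first discards any index already in $\calI_0$, leaving only four values $\{220,240,360,504\}$ and four specific groups $G_1,\ldots,G_4$, each a normalizer of a Cartan subgroup. To exhibit a rational point on each $X_{G_i}$ (and thus establish the inclusion $\scrI_1\supseteq\{220,240,360,504\}$), the paper uses CM elliptic curves: for a CM curve the image of $\rho_{E,\ell}$ lands in the normalizer of a Cartan, split or non-split according to the splitting of $\ell$ in the CM field, so a well-chosen CM $j$-invariant gives a point in $Y_{G_i}(\QQ)$. This is more concrete than your ``rational cusp \ldots\ or a known rational point,'' and is in fact necessary since these particular $X_{G_i}$ need not have rational cusps.
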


The computations in this section were performed with \texttt{Magma} \cite{Magma}; code for the computations can be found at
\begin{center}
\url{https://github.com/davidzywina/PossibleIndices}
\end{center}

Let $S_0$ and $S_1$ be sets of representatives of the congruence subgroups of $\SL_2(\ZZ)$ containing $-I$, up to conjugacy in $\GL_2(\ZZ)$, with genus $0$ and $1$, respectively.  Set $S:=S_0 \cup S_1$.   Since the set $\scrI(\Gamma)$ does not change if we replace $\Gamma$ by $\pm \Gamma$ or by a conjugate subgroup in $\GL_2(\ZZ)$, we have
\[
\scrI = {\bigcup}_{\Gamma \in S} \scrI(\Gamma).
\]

Cummin and Pauli \cite{MR2016709} have classified the congruence subgroups of $\PSL_2(\ZZ)$ with genus $0$ or $1$, up to conjugacy in $\PGL_2(\ZZ)$.   We thus have a classification of the congruence subgroups $\Gamma$ of $\SL_2(\ZZ)$, up to conjugacy in $\GL_2(\ZZ)$, of genus $0$ or $1$ that contain $-I$.       Moreover, they have made available an explicit list\footnote{See \url{http://www.uncg.edu/mat/faculty/pauli/congruence/congruence.html}} of such congruence subgroups; each congruence subgroup is given by a level $N$ and set of generators of its image in $\SL_2(\ZZ/N\ZZ)/\{\pm I\}$.  In our computations, we will let $S_0$ and $S_1$ consist of congruence subgroups from the explicit list of Cummin and Pauli.

\subsection{Computing indices}  \label{SS:how to compute}

Fix a congruence subgroup $\Gamma$ of $\SL_2(\ZZ)$ that contains $-I$ and has level $N_0$.  Let $N$ be the integer $N_0$, $4N_0$ or $2N_0$ when $v_2(N_0)$ is $0$, $1$ or at least $2$, respectively.  For simplicity, we will assume that $N>1$.

We first explain how we computed the subgroups $G(N)$ of $\GL_2(\ZZ/N\ZZ)$ that satisfy conditions (\ref{I:a}), (\ref{I:b}) and (\ref{I:c}) of Definition~\ref{D:I(Gamma)}.  Instead of directly looking for subgroups in $\GL_2(\ZZ/N\ZZ)$, we will search for certain abelian subgroups in a smaller group.

Let $H$ be the the image of $\pm \Gamma =\Gamma$ in $\SL_2(\ZZ/N\ZZ)$.  Define the subgroup $\widetilde{H} := (\ZZ/N\ZZ)^\times \cdot H$ of $\GL_2(\ZZ/N\ZZ)$.   We may assume that $H = \widetilde{H} \cap \SL_2(\ZZ/N\ZZ)$; otherwise, conditions (\ref{I:a}) and (\ref{I:b}) are incompatible.

Let $\calN$ be the normalizer of $\widetilde{H}$ (equivalently, of $H$) in $\GL_2(\ZZ/N\ZZ)$ and set $\calC:= \calN/\widetilde{H}$.   Since $\det(\widetilde{H})=((\ZZ/N\ZZ)^\times)^2$, the determinant induces a homomorphism \[
\det\colon \calC \to  (\ZZ/N\ZZ)^\times / ((\ZZ/N\ZZ)^\times )^2=:Q_N.
\]

\begin{lemma} \label{L:G(N) vs W}
The subgroups $G(N)$ of $\GL_2(\ZZ/N\ZZ)$ that satisfy conditions (\ref{I:a}), (\ref{I:b}) and (\ref{I:c}) of Definition~\ref{D:I(Gamma)} are precisely the groups obtained by taking the inverse image under $\calN \to \calC$ of the subgroups $W$ of $\calC$ for which the determinant induces an isomorphism $W\xrightarrow{\sim} Q_N$.
\end{lemma}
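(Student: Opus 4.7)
The plan is to establish the claimed bijection in both directions. The key observation is that conditions (\ref{I:a}) and (\ref{I:b}) together force $G(N)$ to sit between $\widetilde{H}$ and the normalizer $\calN$, after which condition (\ref{I:c}) becomes a clean statement about the image of $G(N)/\widetilde{H}$ inside $\calC$ under the determinant map.

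First I would check that any $G(N)$ satisfying (\ref{I:a})--(\ref{I:c}) satisfies $\widetilde{H} \subseteq G(N) \subseteq \calN$. Condition (\ref{I:b}) gives $(\ZZ/N\ZZ)^\times \cdot I \subseteq G(N)$ and (\ref{I:a}) gives $H \subseteq G(N)$, so $\widetilde{H} = (\ZZ/N\ZZ)^\times \cdot H \subseteq G(N)$. Since $H = G(N) \cap \SL_2(\ZZ/N\ZZ)$ is normal in $G(N)$ and scalars are central, $\widetilde{H}$ is normal in $G(N)$, giving $G(N) \subseteq \calN$. Setting $W := G(N)/\widetilde{H} \subseteq \calC$, condition (\ref{I:c}) together with $\det(\widetilde{H}) = ((\ZZ/N\ZZ)^\times)^2$ translates into surjectivity of $\det : W \to Q_N$. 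For injectivity, if $g \in G(N)$ represents a class in $\ker(\det|_W)$, then $\det(g) = a^2$ for some $a \in (\ZZ/N\ZZ)^\times$, and $a^{-1} I \in \widetilde{H}$, so $a^{-1}g \in G(N) \cap \SL_2(\ZZ/N\ZZ) = H \subseteq \widetilde{H}$, forcing $g \in \widetilde{H}$.

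Conversely, given $W \subseteq \calC$ with $\det|_W : W \xrightarrow{\sim} Q_N$, I would let $G(N)$ be the preimage of $W$ in $\calN$ and verify (\ref{I:a})--(\ref{I:c}). Conditions (\ref{I:b}) and (\ref{I:c}) are immediate from $\widetilde{H} \subseteq G(N)$ and surjectivity of $\det|_W$. For (\ref{I:a}), the inclusion $H \subseteq G(N) \cap \SL_2(\ZZ/N\ZZ)$ follows from $H \subseteq \widetilde{H} \subseteq G(N)$; for the reverse, if $g \in G(N) \cap \SL_2(\ZZ/N\ZZ)$, then its image in $W$ has trivial determinant in $Q_N$, so it is trivial by injectivity of $\det|_W$, giving $g \in \widetilde{H} \cap \SL_2(\ZZ/N\ZZ) = H$ by the standing assumption recorded just before the lemma.

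There is no real obstacle: the argument is essentially a third-isomorphism-theorem computation. The only subtle input is the standing assumption $H = \widetilde{H} \cap \SL_2(\ZZ/N\ZZ)$, without which the implication from $\ker(\det|_W) = 1$ back to (\ref{I:a}) would fail, and the remark that conditions (\ref{I:a}) and (\ref{I:b}) would then be incompatible is precisely what legitimizes excluding that case from the enumeration.
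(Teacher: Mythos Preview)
Your proof is correct and follows essentially the same approach as the paper's: both directions proceed by sandwiching $G(N)$ between $\widetilde{H}$ and $\calN$, then identifying conditions (\ref{I:a})--(\ref{I:c}) with the requirement that $\det$ be an isomorphism on $W = G(N)/\widetilde{H}$. The only cosmetic difference is that the paper deduces normality of $\widetilde{H}$ in $G(N)$ by characterizing $\widetilde{H}$ as the subgroup of elements with square determinant (hence the kernel of a homomorphism), whereas you argue directly that $H \trianglelefteq G(N)$ and scalars are central; both are fine.
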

\begin{proof}
Let $B:=G(N)$ be a subgroup of $\GL_2(\ZZ/N\ZZ)$ that satisfies conditions (\ref{I:a}), (\ref{I:b}) and (\ref{I:c}).   The group $B$ contains $\widetilde{H}$ by (\ref{I:a}) and (\ref{I:b}).  For any matrix $A\in  B$ with $\det(A)$ a square, there is a scalar $\lambda \in (\ZZ/N\ZZ)^\times$ such that $\det(\lambda A)=1$.  Since $B \cap \SL_2(\ZZ/N\ZZ) = H$ by (\ref{I:a}), we deduce that $\widetilde{H}$ consists precisely of the element of $B$ with square determinant.      The determinant thus gives rise to an exact sequence 
\begin{align} \label{E:QN ses}
1\to \widetilde{H} \hookrightarrow B \xrightarrow{\det} Q_N \to 1.
\end{align}
Therefore, $\widetilde{H}$ is a normal subgroup of $B$, and hence $B \subseteq \calN$, and the determinant map induces an isomorphism $B/\widetilde{H} \xrightarrow{\sim} Q_N$.    Let $W$ be the image of the natural injection $B/\widetilde{H}\hookrightarrow \calN/\widetilde{H}=\calC$; it satisfies the conditions for $W$ in the statement of the lemma.

Now take any subgroup $W$ of $\calC$ for which the determinant gives an isomorphism $W\xrightarrow{\sim} Q_N$.  Let $B$ be the inverse image of $W$ under the map $\calN \to \calC$.   The short exact sequence (\ref{E:QN ses}) holds.   Therefore, $B \cap \SL_2(\ZZ/N\ZZ)$ is equal to $\widetilde{H} \cap \SL_2(\ZZ/N\ZZ) = H$.    We have $B\supseteq (\ZZ/N\ZZ)^\times\cdot I$ since $B\supseteq \widetilde{H}$.    So $\det(B) \supseteq ((\ZZ/N\ZZ)^\times)^2$;  with $\det(B/\widetilde{H}) = Q_N$, this implies that $\det(B) = (\ZZ/N\ZZ)^\times$.   We have verified that $G(N):=B$ satisfies conditions (\ref{I:a}), (\ref{I:b}) and (\ref{I:c}).
\end{proof}
 
 We first compute the subgroups $W$ of $\calC$ for which the determinant map $\calN/\bbar{H} \to Q_N$ gives an isomorphism $W\xrightarrow{\sim}Q_N$.    By Lemma~\ref{L:G(N) vs W},  the subgroups $G(N)$ of $\GL_2(\ZZ/N\ZZ)$ that satisfy the conditions (\ref{I:a}),  (\ref{I:b}) and (\ref{I:c}) of Definition~\ref{D:I(Gamma)} are precisely the inverse images of the groups $W$ under the quotient map $\calN \to \calC$.   We can then check condition (\ref{I:d}) for each of the groups $G(N)$.\\

Now fix one of the finite number of groups $G(N)$ that satisfies conditions (\ref{I:a}), (\ref{I:b}), (\ref{I:c}) and (\ref{I:d}) of Definition~\ref{D:I(Gamma)}.  Let $G$ be the inverse image of $G(N)$ under the reduction map $\GL_2(\ZZ_N) \to \GL_2(\ZZ/N\ZZ)$.    As usual, for an integer $M$ dividing some power of $N$, we let $G(M)$ be the image of $G$ in $\GL_2(\ZZ/M\ZZ)$; note that $G(N)$ agrees with the previous notation.

We shall now describe how to compute the index $[\SL_2(\ZZ_N) : G']$;  this is needed in order to compute $\scrI(\Gamma)$.   We remark that $G'(M)=G(M)'$.

\begin{lemma}\label{L:how to N2}
The group $G'$ contains $\{A \in \SL_2(\ZZ_N) : A\equiv I \pmod{N^2}\}$.  In particular, we have $[\SL_2(\ZZ_N): G'] = [\SL_2(\ZZ/N^2\ZZ): G(N^2)']$.
\end{lemma}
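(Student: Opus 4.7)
The plan is to prove the containment $G' \supseteq \{A \in \SL_2(\ZZ_N) : A \equiv I \pmod{N^2}\}$ directly; the index identity then follows formally. Granted the containment, the surjection $\SL_2(\ZZ_N) \twoheadrightarrow \SL_2(\ZZ/N^2\ZZ)$ sends $G'$ onto $G(N^2)' = G'(N^2)$, and its kernel is contained in $G'$, so the induced map $\SL_2(\ZZ_N)/G' \to \SL_2(\ZZ/N^2\ZZ)/G(N^2)'$ is a bijection.

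For the containment, first note that since $G$ is by construction the full preimage of $G(N)$ under reduction modulo $N$, the kernel $K := I + N M_2(\ZZ_N)$ of $\GL_2(\ZZ_N)\to\GL_2(\ZZ/N\ZZ)$ lies in $G$, so $G' \supseteq [K, K]$. Writing $\ZZ_N = \prod_{\ell\mid N}\ZZ_\ell$ gives $K = \prod_\ell K_\ell$ with $K_\ell = I + \ell^{n_\ell}M_2(\ZZ_\ell)$ and $n_\ell := v_\ell(N)$, and the task reduces to showing, for each $\ell \mid N$, that
\[
[K_\ell, K_\ell] \supseteq \{A \in \SL_2(\ZZ_\ell) : A \equiv I \pmod{\ell^{2n_\ell}}\}.
\]

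By the definition of $N$ recalled just before the lemma, $n_\ell \geq 1$ for odd $\ell$ and $n_\ell \geq 3$ for $\ell = 2$; in particular $n_\ell \geq 2$ at $\ell = 2$, which is the classical range in which the $\ell$-adic $\log$ and $\exp$ give a bijection between $K_\ell$ and its Lie algebra $\ell^{n_\ell}M_2(\ZZ_\ell)$, compatible (via Baker--Campbell--Hausdorff) with group commutator and Lie bracket. Under this identification, the topological derived subgroup of $K_\ell$ corresponds to the closed $\ZZ_\ell$-submodule generated by
\[
[\ell^{n_\ell}X,\, \ell^{n_\ell}Y] \;=\; \ell^{2n_\ell}[X, Y], \qquad X, Y \in M_2(\ZZ_\ell),
\]
namely to $\ell^{2n_\ell}[M_2(\ZZ_\ell), M_2(\ZZ_\ell)]$. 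A direct calculation yields $[M_2(\ZZ_\ell), M_2(\ZZ_\ell)] = \s_2(\ZZ_\ell)$ for every prime $\ell$: with $E_{11}$ the standard rank-one idempotent and $e, f, h$ the usual basis of $\s_2$, one has $[E_{11}, e] = e$, $[E_{11}, f] = -f$ and $[e,f]=h$, so $e$, $f$, $h$ are all individual commutators of $M_2$-elements (sidestepping the factor-of-two obstruction one encounters in $[\s_2,\s_2]$). Applying $\exp$ turns $\ell^{2n_\ell}\s_2(\ZZ_\ell)$ into exactly $\{A \in \SL_2(\ZZ_\ell) : A \equiv I \pmod{\ell^{2n_\ell}}\}$.

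The one real technical point is the convergence of $\log$ and $\exp$ at $\ell = 2$, which requires $v_2(N) \geq 2$; the definition of $N$ was engineered precisely so that $v_2(N) \geq 3$ whenever $N$ is even, comfortably covering this. Everything else is a routine application of uniform pro-$\ell$ group theory, and the resulting commutator formula $[\Gamma(\ell^a), \Gamma(\ell^b)] = \Gamma(\ell^{a+b})$ on the congruence filtration of $\SL_2(\ZZ_\ell)$ is standard and can be cited from a reference on analytic pro-$\ell$ groups.
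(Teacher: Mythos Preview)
Your proof is correct, and at the structural level it matches the paper's: both observe $G \supseteq I + N M_2(\ZZ_N)$, reduce to showing that the derived subgroup of this congruence kernel is $\SL_2(\ZZ_N) \cap (I + N^2 M_2(\ZZ_N))$, and factor into primes. The difference is only in how that last computation is carried out. The paper simply cites Lang--Trotter (Lemma~1, p.~163) for the identity $(I + q M_2(\ZZ_q))' = \SL_2(\ZZ_q) \cap (I + q^2 M_2(\ZZ_q))$, valid for \emph{every} prime power $q>1$. You instead prove it via the $\log/\exp$ correspondence for uniform pro-$\ell$ groups, using that $[M_2(\ZZ_\ell),M_2(\ZZ_\ell)]=\s_2(\ZZ_\ell)$ (your observation that $e,f,h$ arise as brackets of $\gl_2$-elements, not just $\s_2$-elements, neatly avoids the factor $2$ in $[h,e]=2e$). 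Your route is more self-contained but needs $v_2(N)\geq 2$ for convergence at $\ell=2$; you correctly note that the definition of $N$ at the start of \S\ref{SS:how to compute} forces $v_2(N)\in\{0\}\cup\{3,4,\ldots\}$, so this hypothesis is automatically met in context. The cited Lang--Trotter lemma is marginally stronger since it also covers $q=2$, but for the present statement either argument suffices.
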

\begin{proof}
Since $G \supseteq I + N M_2(\ZZ_N)$, it suffices to prove that $(I+ N M_2(\ZZ_N))' = \SL_2(\ZZ_N) \cap (I + N^2 M_2(\ZZ_N))$.   So it suffices to prove that $(I+ q M_2(\ZZ_q))' = \SL_2(\ZZ_q) \cap (I + q^2 M_2(\ZZ_q))$ for any prime power $q>1$;  this is Lemma~1 of \cite{MR0568299}*{p.163}.
\end{proof}

Lemma~\ref{L:how to N2} allows us to compute $[\SL_2(\ZZ_N) : G']$ by computing the finite group $G(N^2)'$.    In practice, we will use the following to reduce the computation to finding $G(M)'$ for some, possibly smaller, divisor $M$ of $N^2$.

\begin{lemma} \label{L:commutator index step}
Let $r$ be the product of the primes dividing $N$.  Let $M>1$ be an integer having the same prime divisors as $N$.   If $G(rM)'$ contains $\{A \in \SL_2(\ZZ/rM\ZZ) : A \equiv I \pmod{M}\}$, then $[\SL_2(\ZZ_N): G'] = [\SL_2(\ZZ/M\ZZ): G(M)']$.
\end{lemma}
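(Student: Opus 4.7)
The plan is to reduce the lemma to the assertion $G' \supseteq K$, where $K := \{A \in \SL_2(\ZZ_N) : A \equiv I \pmod{M}\}$. Granting this containment, the stated index identity follows from the third isomorphism theorem applied to the reduction $\SL_2(\ZZ_N) \twoheadrightarrow \SL_2(\ZZ/M\ZZ)$: its kernel is $K$, and the image of $G'$ is exactly $G(M)'$, since commutator subgroups commute with surjective homomorphisms.

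Writing $K' := \{A \in \SL_2(\ZZ_N) : A \equiv I \pmod{rM}\}$, the hypothesis translates into the equality $(G' \cap K) \cdot K' = K$: for any $A \in K$, the hypothesis produces $B \in G'$ with $AB^{-1} \in K' \subseteq K$, so $B = A(AB^{-1})^{-1}$ itself lies in $K$, placing $B$ in $G' \cap K$. The heart of the proof is then to identify $K'$ as the Frattini subgroup $\Phi(K)$ of the pronilpotent group $K$. Granting this, the Frattini property applied prime-by-prime shows that $G' \cap K$ surjects onto each pro-$\ell$ Sylow $K_\ell$ of $K$, and Goursat's lemma (Lemma~\ref{L:Goursat}) --- applicable because the distinct $K_\ell$ are pro-$\ell$ for different primes $\ell$ and so share no finite simple quotients --- then forces $G' \cap K = K$.

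To compute the Frattini subgroup, I would decompose $K = \prod_{\ell \mid N} K_\ell$ with $K_\ell := \{A \in \SL_2(\ZZ_\ell) : A \equiv I \pmod{\ell^{m_\ell}}\}$ and $m_\ell := v_\ell(M)$, and verify factorwise that $\Phi(K_\ell) = \{A \in \SL_2(\ZZ_\ell) : A \equiv I \pmod{\ell^{m_\ell+1}}\}$. The inclusion $\subseteq$ is immediate, as the quotient on the right is elementary abelian $\ell$. For the reverse, when $\ell$ is odd or when $m_\ell \geq 2$, the exponential identifies $K_\ell$ with the additive group $\ell^{m_\ell}\mathfrak{sl}_2(\ZZ_\ell)$ up to BCH corrections in a strictly deeper level, and the $\ell$-th power map on $K_\ell$ already covers the deeper kernel.

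I expect the main technical obstacle to be the outlying case $\ell = 2$, $m_\ell = 1$, where the exponential does not converge on all of $K_2$. I would handle this by direct matrix computation: the identities $(I+2E_{12})^2 = I + 4E_{12}$ and $(I+2E_{21})^2 = I + 4E_{21}$, together with the commutator calculation $[I+2E_{12},\, I+2E_{21}] \equiv I + 4(E_{11}-E_{22}) \pmod{8}$, exhibit three elements of $\Phi(K_2)$ whose images span the three-dimensional $\FF_2$-space $\{A \equiv I \pmod 4\}/\{A \equiv I \pmod 8\}$; a single application of Nakayama to this pro-$2$ group, invoking the already-established $m \geq 2$ case of the Frattini identity, then upgrades this to the exact equality $\Phi(K_2) = \{A \equiv I \pmod 4\}$, completing the verification.
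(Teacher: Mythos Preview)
Your proposal is correct and follows the same underlying strategy as the paper: reduce to showing $G' \supseteq K$, split into primes $\ell \mid N$, and climb the congruence filtration of $K_\ell$ using the $\ell$-th power map. The differences are in execution rather than in substance, but two are worth noting.

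First, your detour through Goursat's lemma is unnecessary. Once you know $K' = \Phi(K)$ for the full profinite group $K$ (which follows from $\Phi(K) = \prod_\ell \Phi(K_\ell)$ and your factorwise computation), the Frattini property applied directly to $K$ gives $G' \cap K = K$ from $(G'\cap K)\cdot \Phi(K) = K$; there is no need to project to each $K_\ell$ first and then reassemble.

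Second, the paper avoids your case split at $\ell=2$, $m_\ell=1$ by a uniform device: instead of taking arbitrary lifts, it works with the three specific matrices $B \in \{E_{12},\, E_{21},\, \left(\begin{smallmatrix}1&1\\-1&-1\end{smallmatrix}\right)\}$, each of which satisfies $B^2=0$. For any lift $h = I+\ell^i A$ with $A \equiv B \pmod{\ell}$ one then has $(\ell^i A)^2 \equiv 0 \pmod{\ell^{2i+1}}$, so the binomial expansion of $h^\ell$ gives $h^\ell \equiv I + \ell^{i+1}B \pmod{\ell^{i+2}}$ for all $\ell$ and all $i \ge 1$, including $\ell=2$, $i=1$. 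This single computation replaces both your exponential argument and your separate commutator calculation for the $2$-adic diagonal direction.
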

\begin{proof}
For each positive integer $m$, define the group $\calS_{m} := \{A \in \SL_2(\ZZ_m) : A\equiv I \pmod{m}\}$.

Let $H$ be a closed subgroup of $\SL_2(\ZZ_N)$ whose image in $\SL_2(\ZZ/rM\ZZ)$ contains $\{A \in \SL_2(\ZZ/rM\ZZ) : A \equiv I \pmod{M}\}$.   We claim that $H \supseteq  \calS_M$; the lemma will follow from the claim with $H=G'$.   By replacing $H$ with $H\cap\calS_M$, we may assume that $H$ is a closed subgroup of $\calS_M$.   Since $\calS_M$ is a product of the pro-$\ell$ groups $\calS_{\ell^{v_\ell(M)}}$ with $\ell | M$, we may further assume that $M$ is a power of a prime $\ell$ and hence $r=\ell$.\\

So fix a prime power $\ell^e>1$ and let $H$ be a closed subgroup of $\calS_{\ell^e}$ for which $H(\ell^{e+1}) = \{A \in \SL_2(\ZZ/\ell^{e+1}\ZZ): A \equiv I \pmod{\ell^e}\}$; we need to prove that $H = \calS_{\ell^e}$.    

For each integer $i\geq 1$, define $H_i := H \cap (I+\ell^i M_{2}(\ZZ_\ell))$ and $\mathfrak{h}_i := H_i/H_{i+1}$.   For any $A\in M_2(\ZZ_\ell)$ with $I +\ell^i A \in \SL_2(\ZZ_\ell)$, we have $\tr(A)\equiv 0\pmod{\ell}$.   The map $H_i \to M_2(\ZZ_\ell)$, $I+\ell^i A \mapsto A$ thus induces a homomorphism 
\[
\varphi_i\colon \mathfrak{h}_i \hookrightarrow \s_2(\FF_\ell),
\] 
where $\s_2(\FF_\ell)$ is the subgroup of trace $0$ matrices in $M_2(\FF_\ell)$.     Using that $H$ is closed, we deduce that $H=\calS_{\ell^e}$ if and only if $\varphi_i$ is surjective for all $i \geq e$.  

We now show that $\varphi_i$ is surjective for all $i \geq e$.   We proceed by induction on $i$; the homomorphism $\varphi_e$ is surjective by our initial assumption on $H$.  Now suppose that $\varphi_i$ is surjective for a fixed $i\geq e$.   Take any matrix $B$ in the set $\mathcal{B} := \{ \left(\begin{smallmatrix} 0 & 1  \\0 & 0\end{smallmatrix}\right), \left(\begin{smallmatrix} 0 & 0  \\1 & 0\end{smallmatrix}\right), \left(\begin{smallmatrix} 1 & 1  \\-1 & -1\end{smallmatrix}\right) \}$.     The matrix $I+\ell^i B$ has determinant $1$, so the surjectivity of $\varphi_i$ implies that there is a matrix $A \in M_2(\ZZ_\ell)$ with $A\equiv B \pmod{\ell}$ such that $h:= I + \ell^i A$ is an element of $H$.   

Working modulo $\ell^{2i+1}$, we find that $(\ell^i A)^2 = \ell^{2i} A^2 \equiv \ell^{2i} B^2 =0$, where the last equality uses that $B^2=0$.    In particular, $(\ell^i A)^2 \equiv 0 \pmod{\ell^{i+2}}$.   Therefore,
\[
h^{\ell} \equiv I + \tbinom{\ell}{1} \ell^{i} A  \equiv I +\ell^{i+1} A \equiv I +\ell^{i+1} B \pmod{\ell^{i+2}}.
\]
Since $h^\ell \in H$, we find that $B$ modulo $\ell$ lies in the image of $\varphi_{i+1}$.     Since $\s_2(\FF_\ell)$ is generated by the $B\in \mathcal{B}$, we deduce that $\varphi_{i+1}$ is surjective.
\end{proof}

\begin{remark}
In practice, a useful way to compute $G'$ is to first find open subgroups $B_\ell$ of $\GL_2(\ZZ_\ell)$ such that $\prod_{\ell |N} B_\ell \subseteq G$.  We can then compute $B_\ell' \subseteq \SL_2(\ZZ_\ell)$ using Lemma~\ref{L:commutator index step}.   Let $m_\ell$ the smallest power of $\ell$ for which $B_\ell$ is determined by its image modulo $m_\ell$.   We will then have $[\SL_2(\ZZ_N):G']=[\SL_2(\ZZ/M\ZZ): G(M)']$ where $M:=\prod_{\ell|N} m_\ell$.
\end{remark}

\subsection{Genus $0$ computations} \label{SS:genus 0 computations}
In this section, we compute the set of integers
\[
\scrI_0 := {\bigcup}_{\Gamma \in S_0} \scrI(\Gamma).
\]
Instead of computing $\scrI(\Gamma)$, we will compute two related quantities.   Let $\scrI'(\Gamma)$ be the set of integers as in Definition~\ref{D:I(Gamma)} but with condition (\ref{I:e}) excluded.  Let $\scrI''(\Gamma)$ be the set of integers as in Definition~\ref{D:I(Gamma)} with condition (\ref{I:e})  excluded and satisfying the additional condition that $X_{G(N)}^\infty(\QQ_p)$ is empty for at most one prime $p|N$.

\begin{lemma} \label{L:genus 0 inclusions}
For a congruence subgroup $\Gamma$ of genus $0$, we have $\scrI''(\Gamma) \subseteq \scrI(\Gamma) \subseteq \scrI'(\Gamma)$.
\end{lemma}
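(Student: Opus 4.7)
The inclusion $\scrI(\Gamma)\subseteq \scrI'(\Gamma)$ is immediate: the definition of $\scrI'(\Gamma)$ simply omits condition (\ref{I:e}), so any index realized in $\scrI(\Gamma)$ appears in $\scrI'(\Gamma)$.

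The substantive inclusion is $\scrI''(\Gamma)\subseteq \scrI(\Gamma)$. My plan is as follows. Given $G(N)$ satisfying (\ref{I:a})--(\ref{I:d}) together with the cusp hypothesis defining $\scrI''(\Gamma)$, I want to show that condition (\ref{I:e}) automatically holds, i.e.\ that $X_{G(N)}(\QQ)$ is infinite. Since $\Gamma$ has genus $0$, Proposition~\ref{P:isomorphic Riemann surfaces} identifies $X_{G(N)}$ as a smooth, projective, geometrically irreducible curve of genus $0$ over $\QQ$, i.e.\ a conic. A conic over $\QQ$ either has no rational point at all or is isomorphic to $\PP^1_\QQ$ (and therefore has infinitely many), so it suffices to produce a single rational point.

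To produce a $\QQ$-point I plan to invoke the Hasse principle for conics, strengthened by Hilbert reciprocity: a smooth conic over $\QQ$ has a rational point iff it has a $\QQ_v$-point at every place $v$, and because the local invariants of a class in $\operatorname{Br}(\QQ)$ sum to $0$, having $\QQ_v$-points at all places but one forces a $\QQ_v$-point at the last place as well. Hence it suffices to exhibit local points at every place \emph{except} possibly one. The local-point check will split into three cases: (i) at the archimedean place, condition (\ref{I:d}) together with Proposition~\ref{P:R-points for YG} yields $Y_{G(N)}(\RR)\neq\emptyset$, and in particular $X_{G(N)}(\RR)\neq\emptyset$; (ii) at a prime $p\nmid N$, the $\ZZ[1/N]$-model of $X_G$ is smooth and proper with geometrically irreducible fibers, so the Weil bound applied in genus $0$ gives $|X_{G(N)}(\FF_p)|\geq p+1>0$, and Hensel's lemma lifts any smooth $\FF_p$-point to a $\QQ_p$-point; (iii) at each prime $p\mid N$, the defining hypothesis of $\scrI''(\Gamma)$ guarantees that $X_{G(N)}^\infty(\QQ_p)\neq\emptyset$ for all such primes except possibly one, and a $\QQ_p$-rational cusp is in particular a $\QQ_p$-rational point of $X_{G(N)}$. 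Combining (i)--(iii), local points exist at every place of $\QQ$ with at most one exception; by the reciprocity refinement they exist everywhere, and the Hasse principle then delivers $X_{G(N)}(\QQ)\neq\emptyset$.

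The main subtlety I expect is the treatment of bad-reduction primes $p\mid N$, where no Hensel-style lifting is available and we must rely on cusps for local solvability. The very point of introducing $\scrI''(\Gamma)$ rather than requiring local points at every prime $p\mid N$ is to exploit the one-place slack afforded by Hilbert reciprocity; carefully invoking this reciprocity step to absorb the one uncontrolled prime is the crux of the argument, and everything else is a routine application of the structural results already established in \S\ref{S:modular curves}.
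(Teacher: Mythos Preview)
Your proposal is correct and follows essentially the same route as the paper's own proof: both reduce the question to showing $X_{G(N)}$ has a $\QQ$-point by checking local solvability at $\infty$ via condition~(\ref{I:d}) and Proposition~\ref{P:R-points for YG}, at primes $p\nmid N$ via the smooth $\ZZ[1/N]$-model and Hensel, and at primes $p\mid N$ via the cusp hypothesis, then absorbing the one uncontrolled prime using the product formula for Hilbert symbols (equivalently, the reciprocity law for $\operatorname{Br}(\QQ)$) before invoking the Hasse principle for conics. The only cosmetic difference is that the paper makes the reciprocity step concrete by writing $X_{G(N)}$ as a conic $ax^2+by^2-z^2=0$ and using $\prod_v (a,b)_v = 1$, whereas you phrase it in terms of Brauer-group invariants.
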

\begin{proof}
The inclusion $\scrI(\Gamma) \subseteq \scrI'(\Gamma)$ is obvious.   So assume that $G(N)$ is any group satisfying conditions (\ref{I:a})--(\ref{I:d}) of Definition~\ref{D:I(Gamma)} and that $X_{G(N)}^\infty(\QQ_p)$ is empty for at most one prime $p|N$.    To prove the inclusion $\scrI''(\Gamma) \subseteq \scrI(\Gamma)$, we need to verify that $X:=X_{G(N)}$ has infinitely many $\QQ$-points.  Note that the curve $X_\QQ$ is smooth and projective; it has genus $0$ by our assumption on $\Gamma$ and Proposition~\ref{P:isomorphic Riemann surfaces}.    

We claim that $X(\QQ_v)$ is non-empty for all places $v$ of $\QQ$; the places corresponds to the primes $p$ or to $\infty$ where $\QQ_\infty=\RR$.  Condition~(\ref{I:d}) and Proposition~\ref{P:R-points for YG} imply that $X(\RR)$ is non-empty.    Now take any prime $p\nmid N$.  As an $\ZZ[1/N]$-scheme $X$ has good reduction at $p$ and hence the fiber $X$ over $\FF_p$ is a smooth and projective curve of genus $0$.   Therefore, $X(\FF_p)$ is non-empty and any of the points can be lifted by Hensel's lemma to a point in $X(\QQ_p)$.  By our hypothesis on the sets $X_{G(N)}^\infty(\QQ_p)$ with $p|N$, we deduce that there is at most one prime $p_0$ such that $X(\QQ_{p_0})$ is empty.   

So suppose that there is precisely one prime $p_0$ for which $X(\QQ_{p_0})$ is empty.  The curve $X_\QQ$ has a model given by a conic of the form $ax^2+by^2-z^2=0$ with $a,b \in \QQ^\times$.  The \emph{Hilbert symbol} $(a,b)_v$, for a place $v$, is equal to $+1$ if $X(\QQ_v)\neq \emptyset$ and $-1$ otherwise.   Therefore, $\prod_v (a,b) = (a,b)_{p_0}=-1$.    However, we have $\prod_v (a,b) =1$ by reciprocity.   This contradiction proves our claim that $X(\QQ_v)$ is non-empty for all places $v$ of $\QQ$.

The curve $X_\QQ$ has genus $0$ so it satisfies the Hasse principle, and hence has a $\QQ$-rational point.  The curve $X_\QQ$ is thus isomorphic to $\PP^1_\QQ$ and has infinitely many $\QQ$-points.
\end{proof}

We shall use the explicit set $S_0$ due to Cummin and Pauli.   For each $\Gamma \in S_0$, it is straightforward to compute the set $\scrI'(\Gamma)$ using the method in \S\ref{SS:how to compute}.    

Using Lemma~\ref{L:cusp bijection} and the discussion in \S\ref{SS:how to compute}, we can also compute $\scrI''(\Gamma)$.  Fix a prime $p$ dividing $N$.  Take $e$ so that $p^e \parallel N$ and set $M = N/p^e$.   The image of the character $\chi _N \colon \Gal_{\QQ_p} \to (\ZZ/N\ZZ)^\times = (\ZZ/p^e\ZZ)^\times \times (\ZZ/M\ZZ)^\times$ arising from the Galois action on the $N$-th roots of unity is $(\ZZ/p^e\ZZ)^\times \times \langle p \rangle$.\\

Our \texttt{Magma} computations show that  $\bigcup_{\Gamma \in S_0} \scrI''(\Gamma) = \calI_0$ and $\bigcup_{\Gamma \in S_0} \scrI'(\Gamma) = \calI_0$, where 
\[
\calI_0 := \left\{\begin{array}{c}
 2, 4, 6, 8, 10, 12, 16, 20, 24, 30, 32, 36, 40, 48, 54, 60, 72, 84, 96, 108, 112,120, 144, \\192, 288, 336, 384, 576, 768, 864, 1152, 1200, 1296, 1536 
 \end{array}\right\}.
\]
Using the inclusions of Lemma~\ref{L:genus 0 inclusions}, we deduce that $\scrI_0 = \calI_0$.

\begin{remark}
From our genus $0$ computations, we find that $S_0$ has cardinality $121$ which led to $331$ total groups $G(N)$ that satisfied (\ref{I:a})--(\ref{I:d}) with respect to some $\Gamma \in S_0$.
\end{remark}

\subsection{Genus $1$ computations}  \label{SS:genus 1 computations}

Now define the set of integers
\[
\scrI_1 :=  {\bigcup}_{\Gamma \in S_1}\, (\scrI(\Gamma) - \calI_0),
\]
where $\calI_0$ is the set from \S\ref{SS:genus 0 computations}.

Instead of computing $\scrI(\Gamma)$, we will compute a related quantity.
We define $\scrI'''(\Gamma)$ to be the set of integers as in Definition~\ref{D:I(Gamma)} with condition (\ref{I:e})  excluded and satisfying the additional condition that the Mordell-Weil group of the Jacobian $J$ of the curve $X_{G(N)}$ over $\QQ$ has positive rank.   For a congruence subgroup $\Gamma$ of genus $1$, we have an inclusion $\scrI(\Gamma) \subseteq \scrI'''(\Gamma)$ since a genus $1$ curve over $\QQ$ that has a $\QQ$-point is isomorphic to its Jacobian.   Therefore,
\[
\scrI_1 \subseteq {\bigcup}_{\Gamma \in S_1}\, (\scrI'''(\Gamma) - \calI_0).
\]

We now explain how to compute $\scrI'''(\Gamma) - \calI_0$ for a fixed congruence subgroup $\Gamma$ of genus $1$.  As described in \S\ref{SS:how to compute}, we can compute the subgroups  $G(N)$ satisfying the conditions (\ref{I:a})--(\ref{I:d}).  For each group $G(N)$, it is described in \S\ref{SS:how to compute} how to compute $[\SL_2(\ZZ_N) : G']$, where $G$ is the inverse image of $G(N)$ under the reduction map $\GL_2(\ZZ_N) \to \GL_2(\ZZ/N\ZZ)$.   We may assume that $[\SL_2(\ZZ_N) : G'] \cdot 2/\gcd(2,N) \notin \calI_0$ since otherwise it does not contribute to $\scrI'''(\Gamma) - \calI_0$.

Let $J$ be the Jacobian of the curve $X_{G(N)}$ over $\QQ$; it is an elliptic curve since $\Gamma$ has genus $1$.     Let us now explain how to compute the rank of $J(\QQ)$ (and hence finish our method for computing $\scrI'''(\Gamma) - \calI_0$) without having to compute a model for $X_G$.    Moreover, we shall determine the elliptic curve $J$ up to isogeny (defined over $\QQ$); note that the Mordell rank is an isogeny invariant.\\

The curve $J$ has good reduction at all primes $p \nmid N$ since the $\ZZ[1/N]$-scheme $X_{G(N)}$ is smooth.    If $E/\QQ$ is an elliptic curve with good reduction at all primes $p\nmid N$, then its conductor divides $N_{\text{max}}:=\prod_{p | N} p^{e_p}$, where $e_2=8$, $e_3=5$ and $e_p = 2$ otherwise.          One can compute a finite list of elliptic curves 
\[
E_1,\ldots, E_n
\] 
over $\QQ$ that represent the isogeny classes of elliptic curves over $\QQ$ with good reduction at $p\nmid N$.   In our computations, we will have $N_{\text{max}} \leq 2^8 \cdot 3^5 = 62208$ and hence the representative curves $E_i$ can all be found in Cremona's database \cite{cremona} of elliptic curves which are included in \texttt{Magma}  (it currently contains all elliptic curves over $\QQ$ with conductor at most $500000$).   It remains to determine which curve $E_i$ is isogenous to $J$.

Take any prime $p \nmid N$.   Using the methods of \S\ref{SS:Fp points}, we can compute the cardinality of $X_{G(N)}(\FF_p)$ and hence also the \emph{trace of Frobenius} 
\[
a_p(J)= p+1 - |J(\FF_p)|= p+1 - |X_{G(N)}(\FF_p)|.
\]  
If $a_p(E_i) \neq a_p(J)$, then $E_i$ and $J$ are not isogenous elliptic curves over $\QQ$.   By computing $a_p(J)$ for enough primes $p\nmid N$, one can eventually eliminate all but one curve $E_{i_0}$ which then must be isogenous to $J$.   There are then known methods to determine the Mordell rank of $E_{i_0}$; the rank is also part of Cremona's database.  Therefore, we can compute the rank of $J(\QQ)$.
\\

Our \texttt{Magma} computations show that
\[
\bigcup_{\Gamma \in S_1} (\scrI'''(\Gamma) - \calI_0) = \{220, 240, 360, 504\}.
\]
In particular, $\scrI_1 \subseteq \{220,240,360,504\}$.\\

We now describe how the values $220$, $240$, $360$ and $504$ arise in our computations.    

For an odd prime $\ell$, let $\calN_{\ell}^-$ be the normalizer in $\GL_2(\ZZ/\ell\ZZ)$ of a non-split Cartan subgroup and let $\calN_{\ell}^+$ be the normalizer in $\GL_2(\ZZ/\ell\ZZ)$ of a split Cartan subgroup.     Define $G_1:= \calN_{11}^-$.   We can identify $\calN_3^- \times \calN_5^-$ and $\calN_3^- \times \calN_5^+$ with subgroups $G_2$ and $G_3$, respectively, of $\GL_2(\ZZ/15\ZZ)$.   We can identify $\calN_3^- \times \calN_7^-$ with a subgroup $G_4$ of $\GL_2(\ZZ/21\ZZ)$.

Fix an $n \in \{220, 240, 360, 504\}$.   Let $\Gamma \in S_1$ be any congruence subgroup such that $n \in \scrI(\Gamma)$.    
Let $G(N)$ be one of the groups such that the following hold:
\begin{itemize}
\item it satisfies conditions (\ref{I:a}),  (\ref{I:b}), (\ref{I:c}) and (\ref{I:d}) of Definition~\ref{D:I(Gamma)},
\item the Jacobian $J$ of the curve $X_{G(N)}$ over $\QQ$ has positive rank,
\item we have $[\SL_2(\ZZ_N) : G'] \cdot 2/\gcd(2,N) = n$, where $G$ is the inverse image of $G(N)$ under the reduction $\GL_2(\ZZ_N)\to \GL_2(\ZZ/N\ZZ)$.
\end{itemize}
Our computations show that one of the following hold:
\begin{itemize}
\item
We have $n=220$, $N = 11$ and $G(N)$ is conjugate in $\GL_2(\ZZ/11\ZZ)$ to $G_1$.
\item
We have $n=240$, $N = 15$ and $G(N)$ is conjugate in $\GL_2(\ZZ/15\ZZ)$ to $G_2$.
\item
We have $n=360$, $N = 15$ and $G(N)$ is conjugate in $\GL_2(\ZZ/15\ZZ)$ to $G_3$.
\item
We have $n=504$, $N = 21$ and $G(N)$ is conjugate in $\GL_2(\ZZ/21\ZZ)$ to $G_4$.
\end{itemize}
For later, we note that the index $[\GL_2(\ZZ/N\ZZ) : G_i]$ is $55$, $30$, $45$ or $63$ for $i=1$, $2$, $3$ or $4$, respectively.

\begin{lemma}  \label{L:scrI1}
We have $\scrI_1 = \{220, 240, 360, 504\}$.
\end{lemma}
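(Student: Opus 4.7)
The preceding \texttt{Magma} calculation already shows $\scrI_1 \subseteq \{220, 240, 360, 504\}$, so the plan is to prove the reverse inclusion. For each such $n$, the calculation identifies a specific group $G_i \subseteq \GL_2(\ZZ/N\ZZ)$ (with $N$ equal to $11, 15, 15, 21$ respectively) satisfying conditions (\ref{I:a})--(\ref{I:d}) of Definition~\ref{D:I(Gamma)} and with $[\SL_2(\ZZ_N) : G']\cdot 2/\gcd(2,N) = n$, and it shows that the Jacobian $J$ of $X_{G_i}$ has positive Mordell--Weil rank over $\QQ$. To conclude $n \in \scrI(\Gamma)$ for the associated genus-$1$ congruence subgroup $\Gamma$, it remains only to verify condition (\ref{I:e}): that $X_{G_i}(\QQ)$ is infinite.

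Since each $X_{G_i}$ has genus $1$ and its Jacobian has positive rank, it suffices to exhibit a single rational point. Once $X_{G_i}(\QQ) \neq \emptyset$, the curve is isomorphic to $J$ over $\QQ$ and so has infinitely many rational points. To produce such a point I would apply Proposition~\ref{P:key}: any elliptic curve $E/\QQ$ with $j_E \notin \{0,1728\}$ whose mod-$N$ Galois image is conjugate into $G_i$ yields $j_E \in \pi_{G_i}(Y_{G_i}(\QQ))$, hence a rational point on $X_{G_i}$.

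The natural candidates are the thirteen rational CM $j$-invariants. Recall that if $E/\QQ$ has CM by an order in $K = \QQ(\sqrt{-d})$, then for any prime $\ell$ unramified in $K$ the image of $\rho_{E,\ell}$ lies in the normalizer of a non-split (resp.\ split) Cartan subgroup of $\GL_2(\FF_\ell)$ according to whether $\ell$ is inert (resp.\ split) in $K$. Inspecting the splitting of $3, 5, 7, 11$ in the CM fields attached to the thirteen rational $j$-invariants produces the following choices. For $G_1 = \calN_{11}^-$, take $j = 54000$ (CM by $\ZZ[\sqrt{-3}]$, with $11$ inert in $\QQ(\sqrt{-3})$). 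For $G_2 = \calN_3^- \times \calN_5^-$ and $G_4 = \calN_3^- \times \calN_7^-$, take $j = -884736000$ (CM by the maximal order of $\QQ(\sqrt{-43})$, in which each of $3, 5, 7$ is inert). For $G_3 = \calN_3^- \times \calN_5^+$, take $j = -884736$ (CM by the maximal order of $\QQ(\sqrt{-19})$, in which $3$ is inert and $5$ splits). In each case $j \notin \{0, 1728\}$.

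The main obstacle is technical rather than conceptual: one must verify that the mod-$N$ image actually sits inside a conjugate of $G_i$ and not merely inside a larger overgroup. For the prime-level case $G_1 = \calN_{11}^-$ this is immediate from the classical description of CM Galois representations. For the composite levels $N \in \{15, 21\}$, I would use the Chinese remainder decomposition $\GL_2(\ZZ/N\ZZ) \cong \prod_{\ell \mid N} \GL_2(\FF_\ell)$: each $G_i$ is a direct product of normalizers of Cartan subgroups under this decomposition, so the required containment can be checked one prime at a time, reducing to the prime-level case already handled. Combined with the Jacobian rank computations, this yields the inclusion $\{220, 240, 360, 504\} \subseteq \scrI_1$ and completes the proof of the lemma.
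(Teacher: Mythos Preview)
Your proposal is correct and follows essentially the same approach as the paper: reduce to showing each $X_{G_i}$ has a rational point, then produce one via Proposition~\ref{P:key} by exhibiting a CM elliptic curve $E/\QQ$ whose mod-$N$ image lands in the appropriate product of Cartan normalizers. The only difference is cosmetic: the paper uses the order of discriminant $-16$ in $\QQ(i)$ (handling $G_1$, $G_3$, $G_4$, since $3,7,11$ are inert and $5$ splits in $\QQ(i)$) together with the order of discriminant $-7$ in $\QQ(\sqrt{-7})$ (handling $G_2$, since $3,5$ are inert there), whereas you use the orders of discriminants $-12$, $-43$, and $-19$; both choices work for the same reason.
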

\begin{proof}
We already know the inclusion $\scrI_1  \subseteq \{220, 240, 360, 504\}$.   It thus suffices to show that the set $X_{G_i}(\QQ)$ is infinite for all $1\leq i\leq 4$.   So for a fixed $i\in \{1,2,3,4\}$, it suffices to show that $X_{G_i}(\QQ)$ is non-empty, since it then becomes isomorphic to its Jacobian which we know has infinitely many rational points.  By Proposition~\ref{P:key}, it suffices to find a single elliptic curve $E/\QQ$ with $j_E \notin \{0,1728\}$ for which $\rho_{E,N}(\Gal_\QQ)$ is conjugate to a subgroup of $G_i$.

Let $E/\QQ$ be a CM elliptic curve.   Define $R:= \End(E_{\Qbar})$; it is an order in the imaginary quadratic field $K:=R\otimes_\ZZ \QQ$.  Take any odd prime $\ell$ that does not divide the discriminant of $R$.   One can show that $\rho_{E,\ell}(\Gal_\QQ)$ is contained in the normalizer of a Cartan subgroup $C \subseteq \GL_2(\ZZ/\ell\ZZ)$ isomorphic to $(R/\ell R)^\times$, cf.~\cite{MR1757192}*{Appendix~A.5}.    The Cartan group $C$ is split if and only if $\ell$ splits in $K$.

Consider the CM curve $E_1/\QQ$ defined by $y^2=x^3-11x+14$; $R$ is an order in $\QQ(i)$ of discriminant $-16$.     The primes $3$, $7$ and $11$ are inert in $\QQ(i)$ and $5$ is split in $\QQ(i)$.     Therefore, $\rho_{E_1,11}(\Gal_\QQ)$, $\rho_{E_1,15}(\Gal_\QQ)$ and $\rho_{E_1,21}(\Gal_\QQ)$ are conjugate to subgroups of $G_1$, $G_3$ and $G_4$, respectively.

Consider the CM curve $E_2/\QQ$ defined by $y^2+xy = x^3-x^2-2x-1$; $R$ is an order in $\QQ(\sqrt{-7})$ of discriminant $-7$.     The primes $3$ and $5$ are inert in $\QQ(\sqrt{-7})$.     Therefore, $\rho_{E_2,15}(\Gal_\QQ)$ is conjugate to a subgroup of $G_2$.
\end{proof}

\begin{remark}
From our genus $1$ computations, we find that $S_1$ has cardinality $163$ which led to $805$ total groups $G(N)$ that satisfied (\ref{I:a})--(\ref{I:d}) with respect to some $\Gamma \in S_1$.   We needed to determine the Jacobian of $X_{G(N)}$, up to isogeny, for $63$ of these groups $G(N)$.
\end{remark}

\subsection{Proof of Proposition~\ref{P:calI computation}}
In \S\ref{SS:genus 0 computations}, we found that $\bigcup_{\Gamma\in S_0} \scrI(\Gamma) = \calI_0$.  By Lemma~\ref{L:scrI1}, we have 
\[
\Big(\bigcup_{\Gamma\in S_1} \scrI(\Gamma) \Big) - \calI_0 =  \bigcup_{\Gamma\in S_1} (\scrI(\Gamma) - \calI_0) = \{220, 240, 360, 504\}. 
\]
Therefore, $\scrI$ is equal to $\calI_0 \cup \{220,240,360,504\} =\calI$.

\section{Proof of main theorems}  \label{S:proofs}

\subsection{Proof of Theorem~\ref{T:main ineffective}}
The theorem follows immediately from Theorem~\ref{T:main ineffective scrI} and Proposition~\ref{P:calI computation}.

\subsection{Proof of Theorem~\ref{T:index Q}}

\begin{lemma}  \label{L:easy connection}
Let $E/\QQ$ be a non-CM elliptic curve and suppose $\ell > 37$ is a prime for which $\rho_{E,\ell}$ is not surjective.   Then $\ell \leq [\GL_2(\Zhat) :\rho_E(\Gal_\QQ)]$.
\end{lemma}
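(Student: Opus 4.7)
The plan is to exploit the surjection $\GL_2(\Zhat) \twoheadrightarrow \GL_2(\ZZ/\ell\ZZ)$, which gives the inequality
\[
[\GL_2(\Zhat) : \rho_E(\Gal_\QQ)] \;\geq\; [\GL_2(\ZZ/\ell\ZZ) : \rho_{E,\ell}(\Gal_\QQ)],
\]
and then to bound the mod-$\ell$ index below by $\ell$. Set $H := \rho_{E,\ell}(\Gal_\QQ)$; since $\det \circ \rho_{E,\ell}$ equals the mod-$\ell$ cyclotomic character (via the Weil pairing), we have $\det(H) = (\ZZ/\ell\ZZ)^\times$, and by hypothesis $H$ is a proper subgroup of $\GL_2(\ZZ/\ell\ZZ)$.

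I would next invoke the classical trichotomy (Dickson's classification, as packaged in \cite{MR0387283}*{\S2}) for such subgroups: up to conjugacy, either (a) $H$ is contained in a Borel subgroup, (b) $H$ is contained in the normalizer $\calN$ of a split or non-split Cartan subgroup, or (c) the image of $H$ in $\PGL_2(\ZZ/\ell\ZZ)$ is isomorphic to $A_4$, $S_4$, or $A_5$. Case (a) is ruled out by Mazur's isogeny theorem: a $\QQ$-rational $\ell$-isogeny on a non-CM elliptic curve over $\QQ$ forces $\ell \leq 37$. Case (c) is ruled out by Serre's bound on exceptional projective images, which applies already for $\ell > 13$. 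Since $\ell > 37$, only case (b) remains.

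It then suffices to compute the index of the Cartan normalizer. From $|\GL_2(\ZZ/\ell\ZZ)| = \ell(\ell-1)^2(\ell+1)$ and the two Cartan normalizers having orders $2(\ell-1)^2$ (split) and $2(\ell^2-1)$ (non-split), we obtain
\[
[\GL_2(\ZZ/\ell\ZZ):\calN] \in \left\{\tfrac{\ell(\ell+1)}{2},\ \tfrac{\ell(\ell-1)}{2}\right\}.
\]
Both values are at least $\ell$ for $\ell \geq 3$, so
\[
[\GL_2(\Zhat) : \rho_E(\Gal_\QQ)] \;\geq\; [\GL_2(\ZZ/\ell\ZZ) : H] \;\geq\; [\GL_2(\ZZ/\ell\ZZ) : \calN] \;\geq\; \ell,
\]
which gives the conclusion.

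There is no substantive obstacle: once Mazur and Serre have eliminated cases (a) and (c), what remains is a one-line index calculation, and the factor of $\ell$ appearing in the index of any Cartan normalizer yields the desired bound with room to spare. The threshold $37$ in the hypothesis is chosen precisely so that Mazur's list of primes admitting $\QQ$-rational $\ell$-isogenies of non-CM elliptic curves is exhausted.
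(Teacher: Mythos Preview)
Your proof is correct and follows the same approach as the paper: both reduce to showing that $\rho_{E,\ell}(\Gal_\QQ)$ lies in the normalizer of a Cartan subgroup and then use that this normalizer has index at least $\ell(\ell-1)/2 \geq \ell$ in $\GL_2(\ZZ/\ell\ZZ)$. The only difference is packaging---the paper cites \cite{MR644559}*{\S8.4} directly for the Cartan-normalizer conclusion, whereas you unpack it into the Dickson trichotomy and eliminate the Borel and exceptional cases separately via Mazur's isogeny theorem and Serre's treatment of exceptional images.
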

\begin{proof}
  From \cite{MR644559}*{\S8.4}, we find that $\rho_{E,\ell}(\Gal_\QQ)$ is contained in the normalizer of a Cartan subgroup of $\GL_2(\ZZ/\ell\ZZ)$.   In particular, we have $[\GL_2(\ZZ/\ell\ZZ) : \rho_{E,\ell}(\Gal_\QQ) ] \geq \ell (\ell-1)/2 \geq \ell$.   Therefore, 
  $\ell \leq [\GL_2(\ZZ/\ell\ZZ) : \rho_{E,\ell}(\Gal_\QQ) ] \leq [\GL_2(\Zhat): \rho_E(\Gal_\QQ)]$.
\end{proof}

First suppose that there is a finite set $J$ such that if $E/\QQ$ is an elliptic curve with $j_E\notin J$, then $[\GL_2(\Zhat): \rho_E(\Gal_\QQ)] \in \calI$.  There is thus an integer $c>37$ such that for any non-CM $E/\QQ$, we have $[\GL_2(\Zhat): \rho_E(\Gal_\QQ)] \leq c$, this uses Serre's theorem (and Lemma~\ref{C:index depends on j}) to deal with the finite number of $j$-invariants of non-CM curves that are in $J$.  By Lemma~\ref{L:easy connection}, we deduce that $\rho_{E,\ell}$ is surjective for all primes $\ell > c$; this gives Conjecture~\ref{C:Serre1}.\\

Now suppose that Conjecture~\ref{C:Serre1} holds for some constant $c$.    Let $J$ be the finite set from Theorem~\ref{T:main ineffective} with this constant $c$.  After possibly increasing $J$, we may assume that it contains the finite number of $j$-invariants of CM elliptic curves over $\QQ$.     Theorem~\ref{T:main ineffective} then implies that for any elliptic curve $E/\QQ$ with $j_E \notin J$, we have $[\GL_2(\Zhat): \rho_{E}(\Gal_\QQ)] \in \calI$.

\subsection{Proof of Theorem~\ref{T:Jn}}
First take any $n\geq 1$ so that $J_n$ is infinite.  Let $E/\QQ$ be an elliptic curve with $j_E \in J_n$, equivalently, with $[\GL_2(\Zhat): \rho_E(\Gal_\QQ)] =n$.  Lemma~\ref{L:easy connection} implies that $\rho_{E,\ell}$ is surjective for all primes $\ell > \max\{37,n\}$.  Let $J$ be the set from Theorem~\ref{T:main ineffective} with $c:=\max\{37,n\}$.   Now take any elliptic curve $E/\QQ$ with $j_E \in J_n - J$; note that $J_n-J$ is non-empty since $J_n$ is infinite and $J$ is finite.   The representation $\rho_{E,\ell}$ is surjective for all $\ell > c$  and $j_E \notin J$, so $[\GL_2(\Zhat): \rho_E(\Gal_\QQ)] $ is an element of $\calI$ by Theorem~\ref{T:main ineffective}.   Therefore, $n\in \calI$.\\

Now take any integer $n \in \calI$.  To complete the proof of the theorem,  we need to show that $J_n$ is infinite.    By Proposition~\ref{P:calI computation}, we have $n \in \scrI(\Gamma)$ for some congruence subgroup $\Gamma$ of $\SL_2(\ZZ)$ of genus $0$ or $1$.    From our computation of $\scrI_0$ in \S\ref{SS:genus 0 computations}, we may assume that $\Gamma$ has genus $0$ when $n \notin \{220, 240, 360, 504\}$.   

Denote the level of $\Gamma$ by $N_0$.  Let $N$ be the integer $N_0$, $4N_0$ or $2N_0$ when $v_2(N_0)$ is $0$, $1$ or at least $2$, respectively.     The integer $N$ is not divisible by any prime $\ell > 13$ (if $\Gamma$ has genus $0$, this follows from the classification of  genus $0$ congruence subgroups in  \cite{MR2016709}; if $\Gamma$ has genus $1$, then we saw in \S\ref{SS:genus 1 computations} that $N\in\{11,15,21\}$).\\

Since $n\in \scrI(\Gamma)$, there is a subgroup $G(N)$ of $\GL_2(\ZZ/N\ZZ)$ that satisfies conditions (\ref{I:a}), (\ref{I:b}), (\ref{I:c}), (\ref{I:d}) and (\ref{I:e}) of Definition~\ref{D:I(Gamma)} and also satisfies $n=[\SL_2(\ZZ_N) : G_N'] \cdot 2/\gcd(2,N)$, where $G_N$ is the inverse image of $G(N)$ under the reduction map $\GL_2(\ZZ_N) \to \GL_2(\ZZ/N\ZZ)$.   Let $G$ be the inverse image of $G(N)$ under $\GL_2(\Zhat)\to \GL_2(\ZZ/N\ZZ)$. \\

Let $m$ be the product of the primes $\ell \leq 13$; note that $N$ divides some power of $m$.   Let $G_m$ be the image of $G$ under the projection map $\GL_2(\Zhat) \to \GL_2(\ZZ_m)$.  Lemma~\ref{L:Frattini} implies that there is a positive integer $M$, dividing some power of $m$, such that if $H$ is an open subgroup of $G_m \subseteq\GL_2(\ZZ_m)$, then $H$ equals $G_m$ if and only if $H(M)$ equals $G_m(M)=G(M)$.  

Take any proper subgroup $B \subseteq  G(M)$ for which $\det(B)=(\ZZ/M\ZZ)^\times$ and $-I \in B$.    We have a morphism $\varphi_B \colon Y_B \to Y_{G(M)} = Y_{G(N)}$ of curves over $\QQ$ such that $\pi_B = \pi_{G(N)} \circ \varphi_B$.    The morphism $\varphi_B$ has degree $[G(M): B] >1$.  Define
\[
W := \bigcup_B \varphi_B( Y_B(\QQ)),
\]
where $B$ varies over the proper subgroups of $G(M)$ for which $\det(B)=(\ZZ/M\ZZ)^\times$ and $-I \in B$.   We have $W \subseteq Y_{G(N)}(\QQ)$.

\begin{lemma} \label{L:mod M details}
If $E/\QQ$ is a non-CM elliptic curve with $j_E \in \pi_{G(N)}(Y_{G(N)}(\QQ) - W)$, then $\pm \rho_{E,M}(\Gal_\QQ)$ is conjugate in $\GL_2(\ZZ/M\ZZ)$ to $G(M)$.
\end{lemma}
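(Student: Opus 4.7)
The plan is to argue by contradiction, using the moduli interpretation of the curves $Y_B$ and $Y_{G(M)}$. Suppose $E/\QQ$ is non-CM with $j_E = \pi_{G(N)}(P)$ for some $P \in Y_{G(N)}(\QQ) - W$; using the identification $Y_{G(M)} = Y_{G(N)}$ invoked in the text (justified because $G(M)$ is the full preimage of $G(N)$, so the associated congruence subgroups coincide), we may view $P$ as a point of $Y_{G(M)}(\QQ) - W$. Set $B := \pm\rho_{E,M}(\Gal_\QQ) \subseteq \GL_2(\ZZ/M\ZZ)$; it contains $-I$, and $\det(B) = (\ZZ/M\ZZ)^\times$ since $\det\circ\rho_E$ is the cyclotomic character. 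The goal is to show that $B$ is conjugate to $G(M)$.

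First I would produce a moduli-theoretic representative of $P$ whose underlying elliptic curve is $E$ itself, rather than some quadratic twist. By Lemma~\ref{L:YG(k) meaning}(i), $P$ is represented by a pair $(E', [\alpha']_{G(M)})$ with $E'/\QQ$ and $j_{E'} = j_E$. Fix any $\Qbar$-isomorphism $\psi\colon E' \to E$ and put $\tilde\alpha := \alpha'\circ\psi^{-1}$; then $(E, [\tilde\alpha]_{G(M)})$ also represents $P$. Applying Lemma~\ref{L:YG(k) meaning}(ii) to this representative, and using that $\Aut(E_\Qbar) = \{\pm 1\}$ (since $E$ is non-CM) together with $-I \in G(M)$, I obtain $\tilde\alpha\circ\sigma^{-1}\circ\tilde\alpha^{-1} \in G(M)$ for every $\sigma \in \Gal_\QQ$. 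Choosing $\rho_{E,M}$ via the basis $\tilde\alpha$, this says $\rho_{E,M}(\Gal_\QQ) \subseteq G(M)$, and hence $B \subseteq G(M)$ in this realization.

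Now suppose for contradiction that $B$ is a proper subgroup of $G(M)$. Since $B$ contains $-I$ and has full determinant, it is one of the subgroups appearing in the definition of $W$, so $\varphi_B \colon Y_B \to Y_{G(M)}$ is among the morphisms used there. Consider the pair $(E, [\tilde\alpha]_B)$, with the same $\tilde\alpha$ but the finer $B$-equivalence. By the very choice of $B$ we have $\rho_{E,M}(\Gal_\QQ) \subseteq B$, so a second application of Lemma~\ref{L:YG(k) meaning}(ii) (with $-I \in B$ and $\Aut(E_\Qbar) = \{\pm 1\}$) shows that $(E, [\tilde\alpha]_B)$ represents a rational point $Q \in Y_B(\QQ)$. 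By construction $\varphi_B(Q)$ is represented by $(E, [\tilde\alpha]_{G(M)})$, which equals $P$, so $P = \varphi_B(Q) \in \varphi_B(Y_B(\QQ)) \subseteq W$; this contradicts $P \notin W$. Hence $B = G(M)$, which after undoing the basis change means $\pm\rho_{E,M}(\Gal_\QQ)$ is conjugate to $G(M)$ in $\GL_2(\ZZ/M\ZZ)$.

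The main obstacle is the first step. Proposition~\ref{P:key} alone gives only that $\rho_{E,M}(\Gal_\QQ)$ is conjugate to \emph{some} subgroup of $G(M)$, and a point $Q \in Y_B(\QQ)$ supplied by it would merely map into the fiber of $\pi_{G(M)}$ over $j_E$, not necessarily to the specific point $P$. Replacing the abstract representative $(E', [\alpha']_{G(M)})$ by one with underlying curve exactly $E$ via a $\Qbar$-isomorphism is the key manoeuvre: it pins down a specific basis $\tilde\alpha$ of $E[M]$ that simultaneously realizes $B$ as an explicit subgroup of $G(M)$ and ensures the literal equality $\varphi_B(Q) = P$, rather than mere coincidence of $j$-invariants, which is what makes the contradiction $P \in W$ go through.
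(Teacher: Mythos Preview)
Your proof is correct and follows essentially the same approach as the paper's own proof: represent $P$ by a pair $(E,[\alpha]_{G(M)})$ with underlying curve $E$ itself, use Lemma~\ref{L:YG(k) meaning}(\ref{L:YG(k) meaning ii}) and $\Aut(E_{\Qbar})=\{\pm 1\}$ to realize $B:=\pm\rho_{E,M}(\Gal_\QQ)$ as a subgroup of $G(M)$, and then derive a contradiction from $B\subsetneq G(M)$ by lifting $P$ to $Y_B(\QQ)$. The only difference is cosmetic: you spell out explicitly the passage from the representative $(E',[\alpha']_{G(M)})$ furnished by Lemma~\ref{L:YG(k) meaning}(\ref{L:YG(k) meaning i}) to one with underlying curve $E$ via a $\Qbar$-isomorphism $\psi$, whereas the paper simply asserts such a representative exists.
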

\begin{proof}
Fix a non-CM elliptic curve $E/\QQ$ with $j_E \in \pi_{G(N)}(Y_{G(N)}(\QQ) - W)= \pi_{G(M)}(Y_{G(M)}(\QQ) - W)$.  There is a point $P \in Y_G(\QQ)-W$ for which $\pi_{G(M)}(P)=j_E$.   

With notation as in \S\ref{S:modular curves}, there is an isomorphism $\alpha\colon E[M]\xrightarrow{\sim} (\ZZ/M\ZZ)^2$ such that the pair $(E,[\alpha]_G)$ represents $P$.   Since $j_E \notin \{0,1728\}$, the automorphisms of $E_{\Qbar}$ act on $E[N]$ by $I$ or $-I$.    By Lemma~\ref{L:YG(k) meaning}(\ref{L:YG(k) meaning ii}) and $-I \in G(M)$,  we have $\alpha \circ \sigma^{-1} \circ \alpha^{-1} \in G(M)$ for all $\sigma\in \Gal_\QQ$.   We may assume that $\rho_{E,M}$ was chosen so that $\rho_{E,M}(\sigma)=\alpha \circ \sigma \circ \alpha^{-1}$ for all $\sigma\in \Gal_\QQ$.   Since $-I\in G(M)$, we deduce that $B:=\pm \rho_{E,M}(\Gal_\QQ)$ is a subgroup of $G(M)$.  Note that $\det(B)=(\ZZ/M\ZZ)^\times$ and $-I \in B$.

Suppose that $B$ is a proper subgroup of $G(M)$.   We have $\alpha \circ \sigma^{-1} \circ \alpha^{-1} \in B$ for all $\sigma\in \Gal_\QQ$, so $(E,[\alpha]_B)$ represents a point $P'\in Y_B(\QQ)$ by  Lemma~\ref{L:YG(k) meaning}(\ref{L:YG(k) meaning ii}).     We have $\varphi_B(P') = P$, so $P \in W$.   This contradict that $P\in Y_G(\QQ)-W$ and hence $B=G(M)$.
\end{proof}

\begin{lemma} \label{L:W set}
If $E/\QQ$ is an elliptic curve with $j_E \in \pi_{G(N)}(Y_{G(N)}(\QQ) - W)$, then 
\[
[\GL_2(\Zhat): \rho_E(\Gal_\QQ) ] = n
\]
 or $\rho_{E,\ell}$ is not surjective for some prime $\ell > 13$.  

\end{lemma}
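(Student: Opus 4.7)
The plan is as follows. Since a CM elliptic curve $E/\QQ$ has $\rho_{E,\ell}(\Gal_\QQ)$ contained in the normalizer of a Cartan subgroup for \emph{every} prime $\ell$, the CM case is already covered by the disjunction, so I may assume $E$ is non-CM and that $\rho_{E,\ell}$ is surjective for all $\ell > 13$; the goal is then to establish $[\GL_2(\Zhat):\rho_E(\Gal_\QQ)] = n$. Following \S\ref{S:commutator}, set $H := \Zhat^\times \cdot \rho_E(\Gal_\QQ)$. Since scalars commute with everything, $H' = \rho_E(\Gal_\QQ)'$, and by Proposition~\ref{P:EC equality} it suffices to prove $[\SL_2(\Zhat) : H'] = n$; equivalently, I will show $H' = G'$, where $G$ is the inverse image of $G(N)$ in $\GL_2(\Zhat)$.

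Next I would pin $H$ down at the primes of $m$. Without loss of generality I may enlarge $M$ to a multiple of $N$ (the defining Frattini-style property from Lemma~\ref{L:Frattini} is inherited by multiples of $M$). By Lemma~\ref{L:mod M details}, after a conjugation of $\rho_E$ we have $\pm\rho_{E,M}(\Gal_\QQ) = G(M)$. Since $G(M) \supseteq (\ZZ/M\ZZ)^\times \cdot I$ (coming from condition (b) of Definition~\ref{D:I(Gamma)}), this upgrades to $H(M) = G(M)$. Reducing modulo $N$ gives $H_m(N) = G(N) = G_m(N)$, so $H_m \subseteq G_m$; Serre's theorem makes $H_m$ open in $G_m$, and since $H_m(M) = G(M) = G_m(M)$, the defining property of $M$ forces $H_m = G_m$.

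For each prime $\ell > 13$, the surjectivity of $\rho_{E,\ell}$ lifts to $\rho_{E,\ell^\infty}$ by the standard lifting argument for $\ell \geq 5$, so $H_\ell = \GL_2(\ZZ_\ell)$. Hence $H$ projects surjectively onto $G_m$ and onto each $\GL_2(\ZZ_\ell)$ with $\ell > 13$, and correspondingly $H' \subseteq G_m' \times \prod_{\ell>13}\SL_2(\ZZ_\ell)$ with surjective projections to each factor. I would then copy the Goursat argument from \S\ref{SS:proof technical group}: for $\ell \geq 5$, $\PSL_2(\FF_\ell)$ is the unique nonabelian simple quotient of $\SL_2(\ZZ_\ell)$, these are pairwise non-isomorphic across distinct primes, and by \cite{MR1484415}*{IV-25} no $\PSL_2(\FF_\ell)$ with $\ell > 5$ and $\ell \nmid m$ occurs as a quotient of any closed subgroup of $\GL_2(\ZZ_m)$. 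Lemma~\ref{L:Goursat} then yields $H' = G_m' \times \prod_{\ell > 13} \SL_2(\ZZ_\ell) = G'$, where the second equality uses Lemma~\ref{L:SL2 derived group theory}(\ref{L:SL2 derived group theory a}).

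Finally I would compute the index. Writing $G_m = G_N \times \GL_2(\ZZ_d)$ with $d$ the product of the primes dividing $m$ but not $N$,
\[
[\SL_2(\Zhat):G'] = [\SL_2(\ZZ_m):G_m'] = [\SL_2(\ZZ_N):G_N'] \cdot [\SL_2(\ZZ_d):\GL_2(\ZZ_d)'].
\]
Since $2 \mid m$ always, $d$ is even precisely when $N$ is odd, so Lemma~\ref{L:SL2 derived group theory} gives $[\SL_2(\ZZ_d):\GL_2(\ZZ_d)'] = 2/\gcd(2,N)$, and the product equals $n$ by the choice of $G(N)$. The only part requiring any real care is the intertwining in the previous paragraph—checking that $H_m = G_m$ (which relies on arranging $N \mid M$) and verifying that the Goursat hypothesis on simple quotients survives after passing to the commutator $H'$; everything else is bookkeeping against the machinery already set up in \S\S\ref{S:commutator}--\ref{P:theoretic work}.
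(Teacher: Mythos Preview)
Your proof is correct and follows essentially the same route as the paper's: reduce to $H' = G'$ by combining Lemma~\ref{L:mod M details} with the Frattini property of $M$ to get $H_m = G_m$, then assemble the commutator subgroup prime-by-prime via Goursat and the $\SL_2$ group theory of Lemma~\ref{L:SL2 derived group theory}. The only organizational difference is that the paper packages the Goursat step into a black-box application of Lemma~\ref{L:technical group section} (applied to $H$ with the auxiliary modulus $m_0$ consisting of the primes where $\rho_{E,\ell}$ is non-surjective or $\ell\le 5$), whereas you run the Goursat argument inline; the underlying mechanism and the final index computation are identical.
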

\begin{proof}
Let $E/\QQ$ be an elliptic curve with $j_E \in \pi_{G(N)}(Y_{G(N)}(\QQ) - W)$ such that $\rho_{E,\ell}$ is surjective for all $\ell>13$.  We need to show that $[\GL_2(\Zhat): \rho_E(\Gal_\QQ) ] = n$.   The curve $E$ is non-CM since $\rho_{E,\ell}$ is surjective for $\ell>13$.  Define the subgroup 
\[
H := \Zhat^\times \cdot \rho_E(\Gal_\QQ)
\]
of $\GL_2(\Zhat)$.    By Lemma~\ref{L:mod M details}, we may assume that $\pm \rho_{E,M}(\Gal_\QQ) = G(M)$.   Since $G(M)$ contains the scalar matrices in $\GL_2(\ZZ/M\ZZ)$, we have $H(M)=G(M)$ and an inclusion $H \subseteq G$.  In particular, $H' \subseteq G'$.   

Let $m_0$ be the product of the primes $\ell$ for which $\ell \leq 5$ or for which $\rho_{E,\ell}$ is not surjective.     Let $H_m$ and $H_{m_0}$ be the image of $H$ under the projection to $\GL_2(\ZZ_m)$ and $\GL_2(\ZZ_{m_0})$, respectively.    The integer $m_0$ divides $m$ since $\rho_{E,\ell}$ is surjective for all $\ell>13$.

 Lemma~\ref{L:technical group section} applied with $G$ and $m$ replaced by $H$ and $m_0$, respectively, implies that  $H' = H_{m_0} ' \times {\prod}_{\ell \nmid m_0 } \SL_2(\ZZ_\ell)$.   Therefore, we have 
\[
H' = H_m ' \times {\prod}_{\ell \nmid m } \SL_2(\ZZ_\ell).
\]
Since $H' \subseteq G' \subseteq \SL_2(\Zhat)$, we deduce that 
\[
G' = G_m' \times {\prod}_{\ell \nmid m } \SL_2(\ZZ_\ell).
\]    
We have $H_m \subseteq G_m$ and $H(M)=G(M)$, and thus $H_m=G_m$ by our choice of $M$.    Therefore, $H_m'=G_m'$ and hence $H' = G'$.  The groups $H$ and $\rho_E(\Gal_\QQ)$ have the same commutator subgroup, so by Proposition~\ref{P:EC equality}, we have
\[
[\GL_2(\Zhat):\rho_E(\Gal_\QQ)]  = [\SL_2(\Zhat)\colon H']  = [\SL_2(\Zhat)\colon G'].
\]
It remains to show that $[\SL_2(\Zhat)\colon G'] = n$.   We have $G = G_N \times \prod_{\ell \nmid N} \GL_2(\ZZ_\ell)$, so $G' = G_N' \times \prod_{\ell \nmid N} \GL_2(\ZZ_\ell)'$.   By Lemma~\ref{L:SL2 derived group theory}, the index $[\SL_2(\ZZ_\ell) : \GL_2(\ZZ_\ell)']$ is $1$ or $2$ when $\ell\neq 2$ or $\ell=2$, respectively.   Therefore, 
\[
[\SL_2(\Zhat): G ' ] = [\SL_2(\ZZ_N) : G_N'] \cdot \prod_{\ell \nmid N} [\SL_2(\ZZ_\ell) : \GL_2(\ZZ_\ell)'] = [\SL_2(\ZZ_N) : G_N'] \cdot 2/\gcd(2,N) = n.  \qedhere
\]
\end{proof}

Recall that a subset $S$ of $\PP^1(\QQ)$ has \emph{density} $\delta$ if 
\[
|\{P \in S: h(P) \leq x\}|/|\{P \in \PP^1(\QQ): h(P) \leq x\}| \to \delta
\] 
as $x\to \infty$, where $h$ is the height function.   If $X_{G(N)}$ has genus $0$, then it is isomorphic to $\PP^1_\QQ$ (from our assumptions on $G(N)$, the curve $X_{G(N)}$ has infinitely many $\QQ$-points).   Choosing such an isomorphism $X_{G(N)} \cong \PP^1_\QQ$ allows us to define the density of a subset of $X_{G(N)}(\QQ)$; the existence and value of the density does not depend on the choice of isomorphism.

\begin{lemma} \label{L:selective surjectivity}
There is an infinite subset $S$ of $Y_{G(N)}(\QQ)$, with positive density if $X_{G(N)}$ has genus $0$, such that if $E/\QQ$ is an elliptic curve with $j_E \in \pi_{G(N)}(S)$, then $\rho_{E,\ell}$ is surjective for all $\ell > 13$.
\end{lemma}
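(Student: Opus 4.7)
The plan is to invoke a uniform boundedness result for the Galois images arising in the family of elliptic curves over $Y_{G(N)}$: namely, there exists an integer $C$, depending only on $G(N)$, such that for every $P \in Y_{G(N)}(\QQ)$ whose associated elliptic curve $E_P/\QQ$ is non-CM, $\rho_{E_P,\ell}$ is surjective for all primes $\ell > C$.  Such a uniform statement is a Cadoret--Tamagawa-type open image theorem for one-parameter families of elliptic curves over $\QQ$.  With $C$ in hand, the problem reduces to handling only the finitely many primes $13 < \ell \leq C$.

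For each such prime, Proposition~\ref{P:finiteness ell} gives that $\calJ_\ell$ is finite; since $\pi_{G(N)}$ is a finite morphism of smooth curves, each fiber $\pi_{G(N)}^{-1}(\calJ_\ell)$ is a finite subset of $Y_{G(N)}(\QQ)$.  I would then define
\[
S := Y_{G(N)}(\QQ) - \pi_{G(N)}^{-1}\!\Bigl(\,{\bigcup}_{13 < \ell \leq C}\calJ_\ell\;\cup\; J_{\mathrm{CM}}\Bigr),
\]
where $J_{\mathrm{CM}}$ denotes the finite set of $j$-invariants of CM elliptic curves over $\QQ$.  By construction, if $E/\QQ$ has $j_E \in \pi_{G(N)}(S)$ then $E$ is non-CM, $\rho_{E,\ell}$ is surjective for every $13 < \ell \leq C$ (because $j_E \notin \calJ_\ell$), and $\rho_{E,\ell}$ is surjective for every $\ell > C$ by the uniform theorem applied to any point of $Y_{G(N)}(\QQ)$ above $j_E$.

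Since $Y_{G(N)}(\QQ)$ is infinite by hypothesis, and $S$ is obtained by removing only finitely many points, $S$ is infinite.  In the genus $0$ case, $X_{G(N)}$ is isomorphic to $\PP^1_\QQ$ by the Hasse-principle argument already used in Lemma~\ref{L:genus 0 inclusions}, and removing a finite set from an infinite subset of $\PP^1(\QQ)$ preserves density; hence $S$ has positive density.  The principal obstacle is establishing the existence of the uniform constant $C$, which is the main non-elementary input; in its absence one would need to argue via height-counting, bounding contributions to $\bigcup_{\ell > 13}\pi_{G(N)}^{-1}(\calJ_\ell)$ using Mazur's theorem for Borel subgroups, Bilu--Parent--Rebolledo for the split Cartan normalizer, and Faltings-height or gonality estimates on the remaining modular curves $X_H$.
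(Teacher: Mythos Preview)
Your approach via the Cadoret--Tamagawa uniform open image theorem is valid and leads to a correct proof: for a non-isotrivial family of elliptic curves over a curve $X/\QQ$, their results do yield a constant $C$ such that every non-CM fiber $E_P$ has $\rho_{E_P,\ell}$ surjective for all $\ell>C$ (one deduces this from the uniform bound on the $\ell$-adic index together with $\det\circ\rho_{E_P,\ell}$ being the full cyclotomic character, which forces any proper image in $\GL_2(\FF_\ell)$ to have index at least $\ell+1$). With $C$ in hand, your construction of $S$ by excising the finitely many fibers over $\bigcup_{13<\ell\le C}\calJ_\ell\cup J_{\mathrm{CM}}$ works, and indeed gives $S$ cofinite in $Y_{G(N)}(\QQ)$, hence of density $1$ in the genus-$0$ case.

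The paper takes a much more elementary and self-contained route. It first shows that $\pi_{G(N)}(Y_{G(N)}(\QQ))$ cannot lie entirely in $\ZZ$ (using that $X_{G(N)}(\QQ)$ has no $v$-adically isolated points), and then fixes a prime $p$ and an integer $e\ge 1$ with $-e$ in the image of $v_p\circ\pi_{G(N)}$ on $Y_{G(N)}(\QQ)$. The set $S$ is taken to be the rational points $P$ with $v_p(\pi_{G(N)}(P))=-e$. For any $E/\QQ$ with $j_E\in\pi_{G(N)}(S)$ and any $\ell>\max\{37,e\}$, the Tate-curve description at $p$ produces an element of order $\ell$ in $\rho_{E,\ell}(\Gal_{\QQ_p})$, which rules out containment in the normalizer of a Cartan subgroup and hence forces surjectivity. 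The finitely many primes $13<\ell\le\max\{37,e\}$ are then handled via Proposition~\ref{P:finiteness ell}, just as in your argument. So the paper replaces the deep uniformity input by a direct local computation; the cost is that its $S$ has merely positive density rather than density $1$.

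One caution on your proposed fallback in the absence of Cadoret--Tamagawa: the decomposition into Borel, split-Cartan normalizer, and ``remaining'' cases does not close. For the normalizer of a \emph{non-split} Cartan, it is not known that $X_{\mathrm{ns}}^+(\ell)(\QQ)$ consists only of CM points for large $\ell$, and neither Faltings-height nor gonality estimates currently supply the needed uniformity in $\ell$. The Tate-curve argument in the paper sidesteps this entirely by producing an $\ell$-unipotent element directly, which is incompatible with \emph{any} Cartan normalizer.
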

\begin{proof}
We claim that for any place $v$ of $\QQ$, the set $X_{G(N)}(\QQ)$ has no isolated points in $X_{G(N)}(\QQ_v)$, i.e., there is no open subset $U$ of $X_{G(N)}(\QQ_v)$, with respect to the $v$-adic topology, for which $U \cap X_{G(N)}(\QQ)$ consists of a single point.    If $X_{G(N)}$ has genus $0$, then the claim follows since no point in $\PP^1(\QQ)$ is isolated in $\PP^1(\QQ_v)$.  Now consider the case where $X_{G(N)}$ has genus $1$.  If one point of $X_{G(N)}(\QQ)$ was isolated in $X_{G(N)}(\QQ_v)$, then using the group law of $X_{G(N)}(\QQ)$ (by first fixing a rational point), we find that every point is isolated.   So suppose that for each $P \in X_{G(N)}(\QQ)$, there is an open subset $U_P \subseteq X_{G(N)}(\QQ_v)$ such that $U_P \cap X_{G(N)}(\QQ) = \{P\}$.    The sets $\{U_P\}_{P\in X_{G(N)}(\QQ)}$ along with the complement of the closure of $X_{G(N)}(\QQ)$ in $X_{G(N)}(\QQ_v)$ form an open cover of $X_{G(N)}(\QQ_v)$ that has no finite subcover.  This contradicts the compactness of $X_{G(N)}(\QQ_v)$  and proves the claim.

Since $\pi_{G(N)} \colon Y_{G(N)}(\RR) \to \RR$ is continuous, the above claim with $v=\infty$ implies that the set $\pi_{G(N)}(Y_{G(N)}(\QQ))$ is not a subset of $\ZZ$.     Choose a rational number $j\in \pi_{G(N)}(Y_{G(N)}(\QQ))$ that is \emph{not} an integer.   

There is a prime $p$ such that $v_p(j)$ is negative; set $e:=-v_p(j)$.   Let $\calU$ be the set of points $P \in Y_{G(N)}(\QQ_p)$ for which $\pi_{G(N)}(P) \neq 0$ and $v_p(\pi_{G(N)}(P)) = -e$; it is an open subset of $Y_{G(N)}(\QQ_p)$.   Define $S := \calU \cap Y_{G(N)}(\QQ)=\calU \cap X_{G(N)}(\QQ)$; it is non-empty by our choice of $e$ (in particular, $\calU$ is non-empty).  The set $S$ is infinite since otherwise there would be an isolated point of $X_{G(N)}(\QQ)$ in $X_{G(N)}(\QQ_p)$.     If $X_{G(N)}$ has genus $0$,  then $S$ clearly has positive density.\\

Now take any elliptic curve $E/\QQ$ with $j_E \in \pi_{G(N)}(S)$ and any prime $\ell > \max\{ 37, e\}$; it is non-CM since its $j$-invariant is not an integer.   We claim that $\rho_{E,\ell}$ is surjective.   The lemma will follow from the claim after using Proposition~\ref{P:finiteness ell} to remove a finite subset from $S$ to ensure the surjectivity of $\rho_{E,\ell}$ for $13<\ell \leq \max\{37, e\}$.

Suppose that $\rho_{E,\ell}$ is not surjective.   From Lemmas 16, 17 and 18 in \cite{MR644559}, we find that $\rho_{E,\ell}(\Gal_\QQ)$ is  contained in the normalizer of a Cartan subgroup of $\GL_2(\ZZ/\ell\ZZ)$.   In particular, the order of $\rho_{E,\ell}(\Gal_\QQ)$ is not divisible by $\ell$.

We have $v_p(j_E)=-e < 0$ since $j_E \in \pi_{G(N)}(S)$.  Let $E'/\QQ_p$ be the \emph{Tate curve} with $j$-invariant $j_E$; see \cite{MR1484415}*{IV~Appendix~A.1} for details.   From the proposition in \cite{MR1484415}*{IV~Appendix~A.1.5} and our assumption $\ell > e$, we find that $\rho_{E',\ell}(\Gal_{\QQ_p})$ contains an element of order $\ell$.    Since $E'$ and $E$ have the same $j$-invariant, they become isomorphic over some quadratic extension of $\QQ_p$.   Since $\ell$ is odd, we deduce that $\rho_{E,\ell}(\Gal_\QQ)$ contains an element of order $\ell$.       This contradicts that the order of $\rho_{E,\ell}(\Gal_\QQ)$ is not divisible by $\ell$.  Therefore, $\rho_{E,\ell}$ is surjective as claimed.
\end{proof}

Let $W$ and $S$ be the sets from Lemma~\ref{L:W set} and Lemma~\ref{L:selective surjectivity}, respectively.    Take any elliptic curve $E/\QQ$ with $j_E \in \pi_{G(N)}(S-W)$.   Lemma~\ref{L:selective surjectivity} implies that the representation $\rho_{E,\ell}$ is surjective for all $\ell >13$.    Lemma~\ref{L:W set} then implies that $[\GL_2(\Zhat): \rho_E(\Gal_\QQ)] = n$.    Therefore, $J_n \supseteq  \pi_{G(N)}(S-W)$.      So to prove that $J_n$ is infinite, it suffices to show that the set $S-W$ is infinite.

First suppose that $X_{G(N)}$ has genus $0$.   The set $W$ is a \emph{thin} subset of $X_{G(N)}(\QQ) \cong \PP^1(\QQ)$ in the language of \cite{MR1757192}*{\S9.1};  this uses that the union defining $W$ is finite and that the morphisms $\varphi_B$ are dominant with degree at least $2$.    From \cite{MR1757192}*{\S9.7}, we find that $W$ has density $0$.    Since $S$ has positive density, we deduce that $S-W$ is infinite.

Finally suppose that $X_{G(N)}$ has genus $1$.   Since $S$ is infinite, it suffices to show that $W$ is finite.    So take any proper subgroup $B$ of $G(M)$ satisfying $\det(B)=(\ZZ/M\ZZ)^\times$ and $- I \in B$.     It thus suffices to show that the set $X_B(\QQ)$ is finite.    The morphism $\varphi_B\colon X_B \to X_{G(N)}$ is dominant, so $X_B$ has genus at least $1$.   If $X_B$ has genus greater than $1$, then $X_B(\QQ)$ is finite by Faltings' theorem.    We are left to consider the case where $X_B$ has genus $1$.   Let $\Gamma_B$ be the congruence subgroup associated to $X_B$; it has genus $1$.   We have $\Gamma_B \subseteq \Gamma$ and hence the level of $\Gamma_B$ is divisible by $N_0$.  We have $[\SL_2(\ZZ):\Gamma_B] = [\GL_2(\ZZ/M\ZZ) : B]$ and hence $b:=[\GL_2(\ZZ/M\ZZ) : G(M)]=[\GL_2(\ZZ/N\ZZ): G(N)]$ is a proper divisor of $[\SL_2(\ZZ):\Gamma_B]$.    From the computations in \S\ref{SS:genus 1 computations}, we  may assume that $G(N)$ is equal to one of the groups denoted $G_1$, $G_2$, $G_3$ or $G_4$.   In particular, we have $(N_0,b) \in \{(11,55) , (15,30), (15,45), (21,63)\}$.     From the classification in \cite{MR2016709}, we find that there are no genus $1$ congruence subgroups of $\SL_2(\ZZ)$ containing $-I$ whose level is divisible by $N_0$ and whose index in $\SL_2(\ZZ)$ has $b$ as a proper divisor.   So the case where $X_B$ has genus $1$ does not occur and we are done.


\begin{bibdiv}
\begin{biblist}

\bib{Magma}{article}{
      author={Bosma, Wieb},
      author={Cannon, John},
      author={Playoust, Catherine},
       title={The {M}agma algebra system. {I}. {T}he user language},
        date={1997},
     journal={J. Symbolic Comput.},
      volume={24},
      number={3-4},
       pages={235\ndash 265},
        note={Computational algebra and number theory (London, 1993)},
}

\bib{Tommaso}{article}{
   author={Centeleghe, Tommaso Giorgio},
   title={Integral Tate modules and splitting of primes in torsion fields of
   elliptic curves},
   journal={Int. J. Number Theory},
   volume={12},
   date={2016},
   number={1},
   pages={237--248},
   issn={1793-0421},
   review={\MR{3455277}},
}

\bib{cremona}{unpublished}{
  author={Cremona, J.~E.},
  title={Elliptic Curve Data \emph{(webpage)}},
  note={\url{http://johncremona.github.io/ecdata/}},
}

\bib{MR2016709}{article}{
      author={Cummins, C.~J.},
      author={Pauli, S.},
       title={Congruence subgroups of {${\rm PSL}(2,{\Bbb Z})$} of genus less
  than or equal to 24},
        date={2003},
        ISSN={1058-6458},
     journal={Experiment. Math.},
      volume={12},
      number={2},
       pages={243\ndash 255},
         url={http://projecteuclid.org/getRecord?id=euclid.em/1067634734},
      review={\MR{MR2016709 (2004i:11037)}},
}

\bib{MR0337993}{incollection}{
      author={Deligne, P.},
      author={Rapoport, M.},
       title={Les sch\'emas de modules de courbes elliptiques},
        date={1973},
   booktitle={Modular functions of one variable, {II} ({P}roc. {I}nternat.
  {S}ummer {S}chool, {U}niv. {A}ntwerp, {A}ntwerp, 1972)},
   publisher={Springer},
     address={Berlin},
       pages={143\ndash 316. Lecture Notes in Math., Vol. 349},
      review={\MR{MR0337993 (49 \#2762)}},
}

\bib{MR0568299}{book}{
      author={Lang, Serge},
      author={Trotter, Hale},
       title={Frobenius distributions in {${\rm GL}_{2}$}-extensions},
      series={Lecture Notes in Mathematics, Vol. 504},
   publisher={Springer-Verlag},
     address={Berlin},
        date={1976},
        note={Distribution of Frobenius automorphisms in
  ${{\rm{G}}L}_{2}$-extensions of the rational numbers},
      review={\MR{MR0568299 (58 \#27900)}},
}

\bib{MR0387283}{article}{
      author={Serre, Jean-Pierre},
       title={Propri\'et\'es galoisiennes des points d'ordre fini des courbes
  elliptiques},
        date={1972},
        ISSN={0020-9910},
     journal={Invent. Math.},
      volume={15},
      number={4},
       pages={259\ndash 331},
      review={\MR{MR0387283 (52 \#8126)}},
}

\bib{MR644559}{article}{
      author={Serre, Jean-Pierre},
       title={Quelques applications du th\'eor\`eme de densit\'e de
  {C}hebotarev},
        date={1981},
        ISSN={0073-8301},
     journal={Inst. Hautes \'Etudes Sci. Publ. Math.},
      number={54},
       pages={323\ndash 401},
      review={\MR{MR644559 (83k:12011)}},
}

\bib{MR1043865}{book}{
      author={Serre, Jean-Pierre},
       title={Abelian {$l$}-adic representations and elliptic curves},
     edition={Second},
      series={Advanced Book Classics},
   publisher={Addison-Wesley Publishing Company Advanced Book Program},
     address={Redwood City, CA},
        date={1989},
        ISBN={0-201-09384-7},
        note={With the collaboration of Willem Kuyk and John Labute},
      review={\MR{MR1043865 (91b:11071)}},
}

\bib{MR1757192}{book}{
      author={Serre, Jean-Pierre},
       title={Lectures on the {M}ordell-{W}eil theorem},
     edition={Third},
      series={Aspects of Mathematics},
   publisher={Friedr. Vieweg \& Sohn},
     address={Braunschweig},
        date={1997},
        ISBN={3-528-28968-6},
        note={Translated from the French and edited by Martin Brown from notes
  by Michel Waldschmidt, With a foreword by Brown and Serre},
      review={\MR{MR1757192 (2000m:11049)}},
}

\bib{MR1484415}{book}{
      author={Serre, Jean-Pierre},
       title={Abelian {$l$}-adic representations and elliptic curves},
      series={Research Notes in Mathematics},
   publisher={A K Peters Ltd.},
     address={Wellesley, MA},
        date={1998},
      volume={7},
        ISBN={1-56881-077-6},
        note={With the collaboration of Willem Kuyk and John Labute, Revised
  reprint of the 1968 original},
      review={\MR{MR1484415 (98g:11066)}},
}

\bib{Drew-Galois}{misc}{
author={Sutherland, Andrew},
title={Computing images of Galois representations attached to elliptic curves},
date={2015},
note={arXiv:1504.07618 [math.NT]},
}

\bib{MR0366930}{incollection}{
      author={V{\'e}lu, Jacques},
       title={Les points rationnels de {$X_{0}(37)$}},
        date={1974},
   booktitle={Journ\'ees {A}rithm\'etiques ({G}renoble, 1973)},
   publisher={Soc. Math. France},
     address={Paris},
       pages={169\ndash 179. Bull. Soc. Math. France M\'em., 37},
      review={\MR{MR0366930 (51 \#3176)}},
}

\bib{Zywina-Maximal}{article}{
      author={Zywina, David},
       title={Elliptic curves with maximal {G}alois action on their torsion
  points},
        date={2010},
     journal={Bull. London Math. Soc.},
      volume={42},
      number={5},
       pages={811\ndash 826},
}

\end{biblist}
\end{bibdiv}

\end{document}